\theoremstyle{plain} 
\newtheorem{theorem}{\indent\sc Theorem}[section]
\newtheorem{corollary}[theorem]{\indent\sc Corollary}
\newtheorem{proposition}[theorem]{\indent\sc Proposition}
\theoremstyle{definition} 
\newtheorem{remark}[theorem]{\indent\sc Remark}
\newtheorem{example}[theorem]{\indent\sc Example}
\newcommand\on{\operatorname}
\newcommand\R{\on{Ric}}
\title{On some $3$-dimensional almost $\eta$-Ricci solitons \\ with diagonal metrics}
\author{Adara M. Blaga}
\date{}
\begin{document}

\maketitle

\markboth{{\small\it {\hspace{1cm} On some $3$-dimensional almost $\eta$-Ricci solitons with diagonal metrics}}}{\small\it{On some $3$-dimensional almost $\eta$-Ricci solitons with diagonal metrics
\hspace{1cm}}}

\footnote{ 
2010 \textit{Mathematics Subject Classification}.
35Q51, 53B25, 53B50.
}
\footnote{ 
\textit{Key words and phrases}.
Almost $\eta$-Ricci soliton; Killing vector field; flat Riemannian manifold.
}

\begin{abstract}
We study some properties of a $3$-dimensional manifold with a diagonal Riemannian metric as an almost $\eta$-Ricci soliton from the following points of view: under certain assumptions, we determine the potential vector field if $\eta$ is given; we get constraints on the metric when the potential vector field has a particular expression; we compute the defining functions of the soliton when both the potential vector field and the $1$-form are prescribed. Moreover, we find conditions for the manifold to be flat. Based on the theoretical results, we provide examples.
\end{abstract}

\section{Introduction}

The solitons' theory has been lately intensively investigated. The stationary solutions of the Ricci flow, namely, the \textit{Ricci solitons}, still represent a very actual topic to be studied from various points of view. Problems like linear stability of compact Ricci solitons, curvature estimates, rigidity results for gradient Ricci solitons, etc. have been recently treated by Huai-Dong Cao \textit{et. al.} in \cite{cao1, cao2,cao3}. As a generalization of the notion of Ricci soliton given by Hamilton in \cite{ham}, an \textit{almost $\eta$-Ricci soliton} \cite{b, adara, kimura} is a Rieman\-nian manifold $(M,\tilde g)$ with a smooth vector field $V$ which satisfies the following equation
\begin{equation}\label{11}
\frac{1}{2}\pounds_{V}\tilde g+\R+\lambda \tilde g+\mu \eta \otimes \eta =0,
\end{equation}
where $\lambda$ and $\mu$ are two smooth functions on $M$ with $\mu\neq 0$, $\R$ is the Ricci curvature tensor field, $\pounds _{V}\tilde g$ is the Lie derivative of the metric $\tilde g$ in the direction of $V$, and $\eta$ is a $1$-form on $M$. If $V$ is a Killing vector field, i.e., $\pounds _{V}\tilde g=0$, then the soliton is called \textit{trivial}, and in this case, $(M, \tilde g)$ is just a quasi-Einstein manifold.
On the other hand, if $\eta=0$, then $(M, \tilde g,V,\lambda)$ is called an \textit{almost Ricci soliton} \cite{pi}. In this case, the soliton is said to be \textit{shrinking}, \textit{steady} or \textit{expanding} according as $\lambda$ is negative, zero or positive, respectively \cite{chlu}. In the above cases, if $\lambda$ and $\mu$ are real numbers, then we drop "almost".

The aim of the present paper is to describe a $3$-dimensional manifold endowed with a diagonal Riemannian metric as an almost $\eta$-Ricci soliton. More precisely, under certain assumptions, we determine the potential vector field $V$ when $\eta$ is given, we find the conditions that must be satisfied by the Riemannian metric when the potential vector field has a particular expression, and we compute the defining functions $\lambda$ and $\mu$ when both the potential vector field and the $1$-form are prescribed. Based on the theoretical results, we construct examples, among which the $3$-dimensional $Sol_3$ and $\mathbb H^2\times \mathbb R$ Lie groups. An analogous study for the canonical metric has been previously done by the present author in \cite{bl}. More recently, some particular Riemannian manifolds, namely, the $3$-dimensional $Sol_3$ and $\mathbb H^2\times \mathbb R$ Lie groups, have been described as Ricci solitons by Belarbi, Atashpeykara and Haji-Badali in \cite{bel1,bel2,ata} from a similar point of view.

\section{The flatness condition}

We consider $I=I_1\times I_2\times I_3\subseteq \mathbb R^3$, where $I_i\subseteq \mathbb R$, $i\in \{1,2,3\}$, are open intervals, endowed with a diagonal Riemannian metric $\tilde{g}$ given by
$$\tilde{g}=\frac{1}{f_1^2}dx^1\otimes dx^1+\frac{1}{f_2^2}dx^2\otimes dx^2+dx^3\otimes dx^3,$$
where $f_1$ and $f_2$ are two smooth functions nowhere zero on $I$, and $x^1,x^2,x^3$ are the standard coordinates in $\mathbb R^3$. Let
$$\left\{E_1:=f_1\frac{\partial}{\partial x^1}, \ \ E_2:=f_2\frac{\partial}{\partial x^2}, \ \ E_3:=\frac{\partial}{\partial x^3}\right\}$$
be a local orthonormal frame, and, for the sake of simplicity, we will make the following notations:
$$\frac{f_2}{f_1}\cdot\frac{\partial f_1}{\partial x^2}=:a, \ \ \frac{1}{f_1}\cdot\frac{\partial f_1}{\partial x^3}=:b, \ \ \frac{f_1}{f_2}\cdot\frac{\partial f_2}{\partial x^1}=:c, \ \ \frac{1}{f_2}\cdot\frac{\partial f_2}{\partial x^3}=:d.$$
Computing the Lie brackets $[X,Y]:=X\circ Y-Y\circ X$, we get
$$[E_1,E_2]=-aE_1+cE_2,\ \ [E_1,E_3]=-bE_1, \ \ [E_2,E_3]=-dE_2.$$
The Levi-Civita connection $\nabla$ of $\tilde g$, deduced from
the Koszul's formula
$$2\tilde g(\nabla_XY,Z)=X(\tilde g(Y,Z))+Y(\tilde g(Z,X))-Z(\tilde g(X,Y))-$$$$-\tilde g(X,[Y,Z])+\tilde g(Y,[Z,X])+\tilde g(Z,[X,Y]),$$
is given by
$$\nabla_{E_1}E_1=aE_2+bE_3, \ \ \nabla_{E_1}E_2=-aE_1, \ \ \nabla_{E_1}E_3=-bE_1, \ \ \nabla_{E_2}E_1=-cE_2,$$$$\nabla_{E_2}E_2=cE_1+dE_3, \ \ \nabla_{E_2}E_3=-dE_2, \ \ \nabla_{E_3}E_1=0, \ \ \nabla_{E_3}E_2=0, \ \ \nabla_{E_3}E_3=0,$$
and the Riemann and Ricci curvature tensor fields
$$R(X,Y)Z:=\nabla_X\nabla_YZ-\nabla_Y\nabla_XZ-\nabla_{[X,Y]}Z, \ \
\R(Y,Z):=\sum_{k=1}^3\tilde g(R(E_k,Y)Z,E_k)
$$ are the following
\begin{align*}
R(E_1,E_2)E_2&=[E_1(c)+E_2(a)-a^2-c^2-bd]E_1+[E_3(c)-cd]E_3,\\
R(E_2,E_1)E_1&=[E_1(c)+E_2(a)-a^2-c^2-bd]E_2+[E_3(a)-ab]E_3,\\
R(E_1,E_3)E_3&=[E_3(b)-b^2]E_1,\\
R(E_2,E_3)E_3&=[E_3(d)-d^2]E_2,\\
R(E_3,E_1)E_1&=[E_3(a)-ab]E_2+[E_3(b)-b^2]E_3,\\
R(E_3,E_2)E_2&=[E_3(c)-cd]E_1+[E_3(d)-d^2]E_3,\\
R(E_1,E_2)E_3&=
\tilde g(R(E_2,E_1)E_1,E_3)E_1-\tilde g(R(E_1,E_2)E_2,E_3)E_2\\
&=[E_3(a)-ab]E_1-[E_3(c)-cd]E_2,\\
R(E_2,E_3)E_1&=\tilde g(R(E_3,E_2)E_2,E_1)E_2-\tilde g(R(E_2,E_3)E_3,E_1)E_3\\
&=[E_3(c)-cd]E_2,\\
R(E_3,E_1)E_2&=
-\tilde g(R(E_3,E_1)E_1,E_2)E_1+\tilde g(R(E_1,E_3)E_3,E_2)E_3\\
&=-[E_3(a)-ab]E_1,\\
\R(E_1,E_1)&=E_1(c)+E_2(a)+E_3(b)-a^2-b^2-c^2-bd,\\
\R(E_2,E_2)&=E_1(c)+E_2(a)+E_3(d)-a^2-c^2-d^2-bd,\\
\R(E_3,E_3)&=E_3(b)+E_3(d)-b^2-d^2,\\
\R(E_1,E_2)&=0,\ \ \R(E_1,E_3)=E_3(c)-cd, \ \ \R(E_2,E_3)=E_3(a)-ab.
\end{align*}

We shall determine the conditions that the two functions $f_1$ and $f_2$ must satisfy for the manifold to be flat, exploring the cases when each of the two functions depends on a single variable. In this paper, if a function $h$ on $\mathbb R^3$ depends only on some of its variables, then we will write in its argument only that variables in order to emphasize this fact, for example, $h(x^i)$, $h(x^i,x^j)$.

\begin{proposition}\label{ps}
If $f_i=f_i(x^i)$ for $i\in \{1,2\}$, then $(I,\tilde g)$ is a flat Riemannian manifold.
\end{proposition}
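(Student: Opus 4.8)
The plan is to show that the separation-of-variables hypothesis forces the four auxiliary functions $a,b,c,d$ to vanish identically on $I$, and then to read off from the curvature formulas already computed above that the Riemann tensor vanishes.

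First I would record the immediate consequence of the hypothesis. Since $f_1$ depends only on $x^1$, both $\partial f_1/\partial x^2$ and $\partial f_1/\partial x^3$ vanish on $I$, whence $a=0$ and $b=0$; since $f_2$ depends only on $x^2$, both $\partial f_2/\partial x^1$ and $\partial f_2/\partial x^3$ vanish on $I$, whence $c=0$ and $d=0$. Thus $a=b=c=d\equiv 0$ on $I$. In particular, looking at the list of Christoffel-type relations, every covariant derivative $\nabla_{E_i}E_j$ reduces to $0$, so the global orthonormal frame $\{E_1,E_2,E_3\}$ on the box $I$ is parallel; alternatively, one sees directly that all Lie brackets $[E_i,E_j]$ vanish.

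Next I would substitute $a=b=c=d=0$ into the displayed components of the Riemann tensor. Each expression $R(E_i,E_j)E_k$ above is a polynomial in $a,b,c,d$ together with the directional derivatives $E_1(c),E_2(a),E_3(a),E_3(b),E_3(c),E_3(d)$; but each such derivative is the derivative of a function that is identically zero, and hence also vanishes. Therefore $R(E_i,E_j)E_k=0$ for every triple in the displayed list, and the remaining components ($R(E_1,E_2)E_3$, $R(E_2,E_3)E_1$, $R(E_3,E_1)E_2$) were already expressed in terms of those, so they vanish as well. By antisymmetry in the first two slots and multilinearity, $R\equiv 0$ on $I$. (The same conclusion follows more conceptually from the parallelism of the frame noted above: $R(E_i,E_j)E_k=\nabla_{E_i}\nabla_{E_j}E_k-\nabla_{E_j}\nabla_{E_i}E_k-\nabla_{[E_i,E_j]}E_k=0$.)

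Finally, a Riemannian manifold whose Riemann curvature tensor vanishes identically is flat, which is exactly the assertion. I expect no genuine obstacle here: the entire content is the observation that the diagonal, variable-separated form of $\tilde g$ annihilates the connection coefficients $a,b,c,d$, after which the statement is just a substitution into identities established earlier in this section.
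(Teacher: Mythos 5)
Your argument is correct and is essentially the paper's own proof: the hypothesis forces $a=b=c=d=0$, and then every displayed component of $R$ vanishes by direct substitution. You merely spell out the intermediate steps (and add the parallel-frame remark) that the paper leaves implicit.
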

\begin{proof}
In this case, $a=b=c=d=0$, and $R(E_i,E_j)E_k=0$ for any $i,j,k\in \nolinebreak\{1,2,3\}$.
\end{proof}

\begin{proposition}\label{ps1}
If $f_i=f_i(x^3)$ for $i\in \{1,2\}$, then the following assertions are equivalent:

(1) $(I,\tilde g)$ is a flat Riemannian manifold;

(2) $f_1$ and $f_2$ are constant, or $f_i=k_i\in \mathbb R\setminus\{0\}$ and
$f_j(x^3)=\frac{\displaystyle c_1}{\displaystyle x^3-c_2}$, where 
\linebreak 
$c_1\in \mathbb R\setminus \{0\}$, $c_2\in \mathbb R\setminus I_3$, and $i\neq j$.
\end{proposition}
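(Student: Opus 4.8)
The plan is to specialize the curvature formulas of Section~2 to the hypothesis $f_i=f_i(x^3)$ and then recognize the flatness condition as a small system of ODEs in the single variable $x^3$ that can be integrated explicitly. First I would record that, since $f_1$ and $f_2$ depend only on $x^3$, the structure functions simplify to $a=c=0$, while $b=b(x^3)=f_1'/f_1$ and $d=d(x^3)=f_2'/f_2$, the prime denoting $\partial/\partial x^3$. Plugging $a=c=0$ into the list of curvature tensors, the three ``mixed'' components $R(E_1,E_2)E_3$, $R(E_2,E_3)E_1$, $R(E_3,E_1)E_2$ vanish identically, and the rest collapse to $R(E_1,E_2)E_2=-bd\,E_1$, $R(E_2,E_1)E_1=-bd\,E_2$, $R(E_1,E_3)E_3=(b'-b^2)E_1$, $R(E_2,E_3)E_3=(d'-d^2)E_2$, $R(E_3,E_1)E_1=(b'-b^2)E_3$, $R(E_3,E_2)E_2=(d'-d^2)E_3$. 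Since these components determine the whole curvature tensor by its algebraic symmetries, $(I,\tilde g)$ is flat precisely when
$$bd=0,\qquad b'=b^2,\qquad d'=d^2\qquad\text{on }I_3.$$

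The next step is to solve this system. The decisive remark is a uniqueness argument: the equation $b'=b^2$ has $C^1$ (hence locally Lipschitz) right-hand side, so if $b$ vanishes at one point of the connected interval $I_3$, then $b\equiv 0$ there; consequently either $b\equiv 0$ or $b$ is nowhere zero on $I_3$, and the same dichotomy holds for $d$. Together with $bd=0$ this leaves exactly three possibilities: $b\equiv d\equiv 0$; or $b\equiv 0$ and $d$ nowhere zero; or $d\equiv 0$ and $b$ nowhere zero (the fourth, both nowhere zero, contradicts $bd=0$). In the first case $f_1'/f_1=f_2'/f_2=0$, so $f_1$ and $f_2$ are both nonzero constants. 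In the other two, one of the functions is a nonzero constant, say $f_i=k_i$, and for the other I integrate: $b'=b^2$ with $b$ nowhere vanishing gives $b=-1/(x^3-c_2)$ for some $c_2\in\mathbb R$, which must satisfy $c_2\notin I_3$ for $b$ to be smooth there, and then $f_j'/f_j=b$ integrates to $f_j(x^3)=c_1/(x^3-c_2)$ with $c_1\in\mathbb R\setminus\{0\}$ because $f_j$ is nowhere zero. This establishes $(1)\Rightarrow(2)$.

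For $(2)\Rightarrow(1)$ I would simply verify that each family on the list satisfies the three equations above: if $f_1,f_2$ are constant then $b=d=0$ and every curvature component vanishes; if $f_i$ is a nonzero constant and $f_j=c_1/(x^3-c_2)$, then the constant factor contributes $0$, while for the other one $b$ (or $d$) equals $-1/(x^3-c_2)$, which satisfies $b'=1/(x^3-c_2)^2=b^2$, and of course $bd=0$, so again all the curvature components vanish.

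Once the structure functions are in place the argument is essentially a calculation, so I do not anticipate a serious obstacle; the one point requiring care is the logical shape of the case analysis. Concluding from $bd\equiv 0$ together with the two nonlinear ODEs that $b$ and $d$ are each either identically zero or nowhere zero is what makes the classification clean, and this is exactly where ODE uniqueness is needed in place of pointwise reasoning; one must also keep track of the admissibility constraints $c_1\neq 0$ and $c_2\notin I_3$ forced by the requirement that $f_1$ and $f_2$ be smooth and nowhere vanishing on all of $I$.
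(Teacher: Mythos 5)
Your proposal is correct and follows essentially the same route as the paper: reduce to $a=c=0$, $b=f_1'/f_1$, $d=f_2'/f_2$, characterize flatness by $bd=0$, $b'=b^2$, $d'=d^2$, and integrate the Riccati equations. The only difference is that you justify, via ODE uniqueness, why $bd\equiv 0$ splits into the clean alternatives ``$b\equiv 0$ or $b$ nowhere zero'' on the connected interval $I_3$ --- a point the paper's proof passes over silently --- so your write-up is, if anything, slightly more complete.
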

\begin{proof}
In this case, $a=0$, $b=\frac{\displaystyle f_1'}{\displaystyle f_1}$, $c=0$, $d=\frac{\displaystyle f_2'}{\displaystyle f_2}$, and
$$R(E_1,E_2)E_2=-bdE_1, \ \ R(E_2,E_1)E_1=-bdE_2,$$
$$R(E_1,E_3)E_3=(b'-b^2)E_1, \ \ R(E_3,E_1)E_1=(b'-b^2)E_3,$$
$$R(E_2,E_3)E_3=(d'-d^2)E_2, \ \ R(E_3,E_2)E_2=(d'-d^2)E_3.$$
Then, $R=0$ if and only if
\begin{equation*}
\left\{
    \begin{aligned}
  &     bd=0 \\
  &     b'=b^2 \\
  &     d'=d^2
    \end{aligned}
  \right.,
\end{equation*}
that is,
\begin{equation*}
\left\{
    \begin{aligned}
 &      f_1'f_2'=0 \\
 &      \left(\frac{\displaystyle f_1'}{\displaystyle f_1}\right)'=\left(\frac{\displaystyle f_1'}{\displaystyle f_1}\right)^2 \\
 &      \left(\frac{\displaystyle f_2'}{\displaystyle f_2}\right)'=\left(\frac{\displaystyle f_2'}{\displaystyle f_2}\right)^2
    \end{aligned}
  \right..
\end{equation*}
\pagebreak

\noindent
Therefore,

$\bullet$ $f_1'=0$ and $f_2'=0$, or

$\bullet$ $f_1'=0$ and $-\frac{\displaystyle 1}{\frac{\displaystyle f_2'(x^3)}{\displaystyle f_2(x^3)}}=x^3+c_0$ \Big(i.e., $f_2(x^3)=\frac{\displaystyle c_1}{\displaystyle x^3-c_2}$, where $c_1\in \mathbb R\setminus \{0\}$, $c_2\in \mathbb R\setminus I_3$\Big), or, similarly

$\bullet$ $f_1(x^3)=\frac{\displaystyle c_1}{\displaystyle x^3-c_2}$, where $c_1\in \mathbb R\setminus \{0\}$, $c_2\in \mathbb R\setminus I_3$ and $f_2'=0$.
\end{proof}

\begin{corollary}
For $I=\mathbb R^3$, if $f_i=f_i(x^3)$ for $i\in \{1,2\}$, then the following assertions are equivalent:

(1) $(\mathbb R^3,\tilde g)$ is a flat Riemannian manifold;

(2) $f_1$ and $f_2$ are constant.
\end{corollary}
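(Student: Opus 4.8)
The plan is to derive the corollary directly from Proposition~\ref{ps1} by pruning the non-constant solutions using the extra hypothesis $I=\mathbb{R}^3$, in particular $I_3=\mathbb{R}$. The equivalence (1)$\Leftrightarrow$(2) in Proposition~\ref{ps1} lists two families of flat metrics: the one where $f_1,f_2$ are both constant, and the one where one of them, say $f_j$, equals $c_1/(x^3-c_2)$ with $c_1\in\mathbb{R}\setminus\{0\}$ and $c_2\in\mathbb{R}\setminus I_3$. So the only thing to check is that the second family becomes vacuous when $I_3=\mathbb{R}$.

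First I would note that the implication (2)$\Rightarrow$(1) is immediate: if $f_1,f_2$ are constant it is covered by Proposition~\ref{ps} (or directly by Proposition~\ref{ps1}), so flatness holds. For (1)$\Rightarrow$(2), I would apply Proposition~\ref{ps1} with $I_3=\mathbb{R}$: flatness forces either $f_1,f_2$ both constant — which is exactly (2) — or the existence of a constant $c_2\in\mathbb{R}\setminus I_3=\mathbb{R}\setminus\mathbb{R}=\varnothing$. Since no such $c_2$ exists, the second alternative is impossible, and we are left with $f_1,f_2$ constant. This establishes the equivalence.

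The main (and essentially only) obstacle is the condition $c_2\in\mathbb{R}\setminus I_3$ in Proposition~\ref{ps1}: one must make sure this really does exclude the whole non-constant branch and that there is no edge case. The reason $c_2\notin I_3$ is required in the first place is that $f_j(x^3)=c_1/(x^3-c_2)$ must be smooth and nowhere zero on $I_3$, which fails precisely at $x^3=c_2$; hence the pole must lie outside $I_3$. When $I_3=\mathbb{R}$ there is nowhere left to hide the pole, so the function $c_1/(x^3-c_2)$ can never be a valid (globally defined, nowhere-zero, smooth) factor $f_j$ on $\mathbb{R}$. I would spell this out in one line to make the argument self-contained, and conclude. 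No substantial computation is needed; the corollary is a clean specialization of the preceding proposition.

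\begin{proof}
$(2)\Rightarrow(1)$ follows from Proposition~\ref{ps} (or from Proposition~\ref{ps1}).

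$(1)\Rightarrow(2)$: Assume $(\mathbb R^3,\tilde g)$ is flat. By Proposition~\ref{ps1}, either $f_1$ and $f_2$ are constant, or there exist $i\neq j$, $c_1\in\mathbb R\setminus\{0\}$ and $c_2\in\mathbb R\setminus I_3$ with $f_i$ constant and $f_j(x^3)=\frac{c_1}{x^3-c_2}$. In the present setting $I_3=\mathbb R$, so $\mathbb R\setminus I_3=\varnothing$ and no such $c_2$ can exist; equivalently, the function $x^3\mapsto\frac{c_1}{x^3-c_2}$ fails to be smooth (indeed, fails to be defined) at $x^3=c_2\in\mathbb R=I_3$, contradicting the requirement that $f_j$ be a smooth function nowhere zero on $I_3$. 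Hence the second alternative is impossible, and $f_1$ and $f_2$ must be constant.
\end{proof}
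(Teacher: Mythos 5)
Your proof is correct and follows the same route as the paper, which simply cites Proposition~\ref{ps1}; you just make explicit the observation that the non-constant branch requires $c_2\in\mathbb R\setminus I_3=\varnothing$ and is therefore vacuous when $I_3=\mathbb R$. Nothing further is needed.
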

\begin{proof}
It follows immediately from Proposition \ref{ps1}.
\end{proof}

\begin{proposition}\label{ps11}
If $f_1=f_1(x^1)$, $f_2=f_2(x^3)$, then the following assertions are equivalent:

(1) $(I,\tilde g)$ is a flat Riemannian manifold;

(2) $f_2$ is constant, or $f_2(x^3)=\frac{\displaystyle c_1}{\displaystyle x^3-c_2}$, where $c_1\in \mathbb R\setminus \{0\}$, $c_2\in \mathbb R\setminus I_3$.
\end{proposition}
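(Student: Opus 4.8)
The plan is to mimic the proof of Proposition~\ref{ps1}: specialize the auxiliary functions $a,b,c,d$ to the present hypotheses, read off which components of the Riemann tensor survive, reduce the flatness condition to a single ODE, and then integrate it. Since $f_1$ depends only on $x^1$ and $f_2$ only on $x^3$, no genuinely new phenomenon should appear; the argument is essentially the $f_1$-constant sub-case of Proposition~\ref{ps1} with $f_1$ replaced by an arbitrary function of $x^1$.

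First I would evaluate $a,b,c,d$. Because $\partial f_1/\partial x^2=\partial f_1/\partial x^3=0$ we get $a=b=0$, because $\partial f_2/\partial x^1=0$ we get $c=0$, and $d=\frac{f_2'}{f_2}$ is a function of $x^3$ alone, with $E_3(d)=\frac{d}{dx^3}\!\left(\frac{f_2'}{f_2}\right)$. Substituting $a=b=c=0$ into the curvature list compiled above, every listed component vanishes except $R(E_2,E_3)E_3=(E_3(d)-d^2)E_2$ and $R(E_3,E_2)E_2=(E_3(d)-d^2)E_3$. Hence $(I,\tilde g)$ is flat if and only if $E_3(d)=d^2$, that is, $\left(\frac{f_2'}{f_2}\right)'=\left(\frac{f_2'}{f_2}\right)^2$ on $I_3$.

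It then remains to solve this ODE for $u:=\frac{f_2'}{f_2}=(\ln|f_2|)'$, namely $u'=u^2$. The constant function $0$ is a solution, so by uniqueness of solutions of a first-order ODE on the connected interval $I_3$ there is a dichotomy: either $u\equiv 0$, whence $f_2$ is constant; or $u$ is nowhere zero, in which case $\left(-\tfrac{1}{u}\right)'=\tfrac{u'}{u^2}=1$, so $u(x^3)=-\tfrac{1}{x^3-c_2}$ for some constant $c_2$, and integrating $(\ln|f_2|)'=-\tfrac{1}{x^3-c_2}$ once more gives $f_2(x^3)=\tfrac{c_1}{x^3-c_2}$ with $c_1\in\mathbb R\setminus\{0\}$. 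Requiring $f_2$ to be smooth and nowhere zero on $I_3$ is exactly $c_2\in\mathbb R\setminus I_3$ (together with $c_1\neq 0$), which is assertion (2); the converse follows by direct substitution back into the curvature formulas. Note that $f_1$ is left completely unconstrained, since $a=b=0$ means it contributes nothing to the curvature.

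I do not expect a serious obstacle. The only points needing a little care are the bookkeeping that confirms all curvature components other than those involving $E_3(d)-d^2$ vanish under $a=b=c=0$, and the uniqueness argument that rules out $u$ being zero on a proper subinterval of $I_3$ while nonzero elsewhere; both are routine.
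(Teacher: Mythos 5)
Your proposal is correct and follows essentially the same route as the paper: under the hypotheses $a=b=c=0$ and $d=f_2'/f_2$, the only surviving curvature components are those involving $E_3(d)-d^2$, so flatness reduces to $d'=d^2$, which integrates to the stated dichotomy. The only difference is that you justify the dichotomy (either $d\equiv 0$ or $d$ nowhere zero) via uniqueness for the ODE $u'=u^2$, a point the paper passes over silently; this is a harmless refinement, not a different method.
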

\begin{proof}
In this case, $a=b=c=0$, $d=\frac{\displaystyle f_2'}{\displaystyle f_2}$, and
$$R(E_2,E_3)E_3=(d'-d^2)E_2, \ \ R(E_3,E_2)E_2=(d'-d^2)E_3,$$$$R(E_1,E_2)E_2=R(E_1,E_3)E_3=R(E_2,E_1)E_1=R(E_3,E_1)E_1=0.$$
Then, $R=0$ if and only if
$$d'=d^2,$$
that is,
$$\left(\frac{\displaystyle f_2'}{\displaystyle f_2}\right)'=\left(\frac{\displaystyle f_2'}{\displaystyle f_2}\right)^2.$$
Therefore,

$\bullet$ $f_2'=0$, or

$\bullet$ $-\frac{\displaystyle 1}{\frac{\displaystyle f_2'(x^3)}{\displaystyle f_2(x^3)}}=x^3+c_0$ \Big(i.e., $f_2(x^3)=\frac{\displaystyle c_1}{\displaystyle x^3-c_2}$, where $c_1\in \mathbb R\setminus \{0\}$, $c_2\in \mathbb R\setminus I_3$\Big).
\end{proof}

\begin{corollary}
For $I=\mathbb R^3$, if $f_1=f_1(x^1)$, $f_2=f_2(x^3)$, then the following assertions are equivalent:

(1) $(\mathbb R^3,\tilde g)$ is a flat Riemannian manifold;

(2) $f_2$ is constant.
\end{corollary}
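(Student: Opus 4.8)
The plan is to derive this corollary directly from Proposition \ref{ps11}, exactly as the preceding corollary was derived from Proposition \ref{ps1}. First I would invoke Proposition \ref{ps11}: under the standing hypothesis $f_1=f_1(x^1)$, $f_2=f_2(x^3)$, flatness of $(I,\tilde g)$ is equivalent to the dichotomy that either $f_2$ is constant or $f_2(x^3)=c_1/(x^3-c_2)$ with $c_1\in\mathbb R\setminus\{0\}$ and $c_2\in\mathbb R\setminus I_3$.

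Next I would specialize to $I=\mathbb R^3$, so that $I_3=\mathbb R$ and hence $\mathbb R\setminus I_3=\emptyset$. Consequently no admissible value $c_2$ exists, the second alternative of Proposition \ref{ps11}(2) is vacuous, and only ``$f_2$ constant'' survives; this gives the implication $(1)\Rightarrow(2)$. For $(2)\Rightarrow(1)$ I would either quote Proposition \ref{ps11} again, or observe directly that $f_2$ constant forces $a=b=c=d=0$, whence $R=0$ from the curvature formulas already listed.

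I do not expect any genuine obstacle here: the whole content of the proof is the domain remark that a function of the form $c_1/(x^3-c_2)$ cannot be smooth and nowhere zero on all of $\mathbb R$, since it necessarily blows up at $x^3=c_2$. The only thing to be careful about is phrasing this cleanly enough that the reduction from Proposition \ref{ps11} is airtight.
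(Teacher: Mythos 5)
Your proposal is correct and follows the same route as the paper, which simply states that the corollary follows immediately from Proposition \ref{ps11}; you have merely made explicit the key observation that $\mathbb R\setminus I_3=\emptyset$ when $I_3=\mathbb R$, so the alternative $f_2(x^3)=c_1/(x^3-c_2)$ is ruled out.
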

\begin{proof}
It follows immediately from Proposition \ref{ps11}.
\end{proof}

\begin{proposition}\label{ps11ax}
If $f_i=f_i(x^2)$ for $i\in \{1,2\}$, then the following assertions are equivalent:

(1) $(I,\tilde g)$ is a flat Riemannian manifold;

(2) $f_1$ and $f_2$ satisfy the equation
$$\frac{\displaystyle f_1'}{\displaystyle f_1} \frac{\displaystyle f_2'}{\displaystyle f_2}+\frac{\displaystyle f_1''}{\displaystyle f_1}=2\left(\frac{\displaystyle f_1'}{\displaystyle f_1}\right)^2;$$

(3) $f_1$ is constant, or $f_1'$ is nowhere zero and $f_2=c_0\frac{\displaystyle f_1^2}{\displaystyle f_1'}$, where $c_0\in \mathbb R\setminus \{0\}$.
\end{proposition}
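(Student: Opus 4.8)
The plan is to follow the template of Propositions~\ref{ps1} and~\ref{ps11}: specialize the structure functions $a,b,c,d$ to the present ansatz, read off which components of the Riemann tensor survive, rewrite the flatness condition as an ODE in $f_1,f_2$, and then solve that ODE.

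Since $f_1=f_1(x^2)$ and $f_2=f_2(x^2)$, one has $b=c=d=0$, while $a=\dfrac{f_2}{f_1}f_1'=f_2\,\dfrac{f_1'}{f_1}$ is a function of $x^2$ alone, so $E_1(a)=E_3(a)=0$ and $E_2(a)=f_2\,a'$. Feeding this into the curvature list, all components vanish except
$$R(E_1,E_2)E_2=(E_2(a)-a^2)E_1,\qquad R(E_2,E_1)E_1=(E_2(a)-a^2)E_2,$$
so $(I,\tilde g)$ is flat iff $E_2(a)=a^2$, that is $f_2\,a'=a^2$. Substituting $a=f_2f_1'/f_1$ and dividing by the nowhere-zero $f_2^2$ turns this into $\left(\dfrac{f_1'}{f_1}\right)'+\dfrac{f_2'}{f_2}\dfrac{f_1'}{f_1}=\left(\dfrac{f_1'}{f_1}\right)^2$, and since $\left(\dfrac{f_1'}{f_1}\right)'=\dfrac{f_1''}{f_1}-\left(\dfrac{f_1'}{f_1}\right)^2$, this is exactly the equation in~(2). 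Hence (1)$\Leftrightarrow$(2).

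For (2)$\Leftrightarrow$(3), observe first that each of (2) and (3) entails the dichotomy ``$f_1$ is constant, or $f_1'$ is nowhere zero.'' Under (3) this is immediate. Under (2), write $f_2a'=a^2$ as $a'=a^2/f_2$ on the interval $I_2$; the right-hand side is smooth in $a$ and continuous in $x^2$ (here $1/f_2$ is smooth because $f_2$ is nowhere zero), hence locally Lipschitz in $a$, so uniqueness of solutions holds, and since $a\equiv 0$ solves it we get $a\equiv 0$ or $a$ nowhere zero on $I_2$; as $f_2/f_1$ never vanishes, $a$ vanishes precisely where $f_1'$ does, giving the dichotomy. Now split into cases. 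If $f_1$ is constant, (2) reads $0=0$ and the first alternative of (3) holds, so the equivalence is trivial. If $f_1'$ is nowhere zero, multiply (2) by the nowhere-zero function $f_1^3/(f_1')^2$; using $\left(\dfrac{f_1^2}{f_1'}\right)'=2f_1-\dfrac{f_1^2f_1''}{(f_1')^2}$, equation (2) becomes equivalent to
$$\left(\frac{f_1^2}{f_1'}\right)'=\frac{f_2'}{f_2}\cdot\frac{f_1^2}{f_1'},$$
i.e. to $f_1^2/f_1'$ and $f_2$ having the same logarithmic derivative on the connected interval $I_2$, i.e. to $f_2=c_0\,f_1^2/f_1'$ for some $c_0\in\mathbb R\setminus\{0\}$ --- the second alternative of (3). (For the converse implication within this case, substituting $f_2'/f_2=2f_1'/f_1-f_1''/f_1'$ back into the left side of (2) collapses it to $2(f_1'/f_1)^2$.)

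I expect the only subtle step to be the dichotomy ``$f_1$ constant or $f_1'$ nowhere zero'': unlike the rest, it is not an algebraic identity but rests on uniqueness for the Bernoulli equation $a'=a^2/f_2$, which is legitimate precisely because $f_2$ is nowhere zero so the equation stays regular on all of $I_2$. The remaining steps --- the curvature substitution and the identity used to recognize the integrable form of (2) --- are the kind of routine computation already carried out in the preceding propositions.
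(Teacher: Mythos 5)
Your proposal is correct and follows essentially the same route as the paper: the same reduction of flatness to $f_2a'=a^2$, hence to equation (2), followed by integrating a logarithmic-derivative identity to obtain $f_2=c_0f_1^2/f_1'$ (the paper splits $\frac{f_2'}{f_2}=\frac{f_1'}{f_1}-\frac{(f_1'/f_1)'}{f_1'/f_1}$ and integrates, while you recognize $(f_1^2/f_1')'$ directly --- a cosmetic difference). Your ODE-uniqueness argument for the dichotomy ``$f_1$ constant or $f_1'$ nowhere zero'' is a welcome extra: the paper merely asserts this in passing (``if $f_1'\neq 0$, hence, if it is nowhere zero'') without justification.
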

\begin{proof}
In this case, $a=f_2\frac{\displaystyle f_1'}{\displaystyle f_1}$, $b=c=d=0$, and
$$R(E_1,E_2)E_2=[E_2(a)-a^2]E_1, \ \ R(E_2,E_1)E_1=[E_2(a)-a^2]E_2,$$
$$R(E_1,E_3)E_3=R(E_3,E_1)E_1=R(E_2,E_3)E_3=R(E_3,E_2)E_2=0.$$
Then, $R=0$ if and only if
$$f_2a'=a^2,$$
that is,
$$f_2f_2'\frac{\displaystyle f_1'}{\displaystyle f_1}+f_2^2\frac{\displaystyle f_1''f_1-(f_1')^2}{\displaystyle f_1^2}=f_2^2\left(\frac{\displaystyle f_1'}{\displaystyle f_1}\right)^2,$$
which is equivalent to
$$\frac{\displaystyle f_1'}{\displaystyle f_1}\frac{\displaystyle f_2'}{\displaystyle f_2}+\frac{\displaystyle f_1''}{\displaystyle f_1}=2\left(\frac{\displaystyle f_1'}{\displaystyle f_1}\right)^2.$$
If $f_1'= 0$, then $f_1$ is a constant function. If $f_1'\neq 0$ (hence, if it is nowhere zero), then the previous relation can be written as
$$\frac{\displaystyle f_2'}{\displaystyle f_2}=\frac{2\left(\frac{\displaystyle f_1'}{\displaystyle f_1}\right)^2-\frac{\displaystyle f_1''}{\displaystyle f_1}}{\frac{\displaystyle f_1'}{\displaystyle f_1}}.$$
Let us notice that
$$\frac{\displaystyle f_2'}{\displaystyle f_2}=\frac{\left(\frac{\displaystyle f_1'}{\displaystyle f_1}\right)^2-\left(\frac{\displaystyle f_1'}{\displaystyle f_1}\right)'}{\frac{\displaystyle f_1'}{\displaystyle f_1}}=\frac{\displaystyle f_1'}{\displaystyle f_1}-\frac{\left(\frac{\displaystyle f_1'}{\displaystyle f_1}\right)'}{\frac{\displaystyle f_1'}{\displaystyle f_1}},$$
which, by integration, gives
$$\ln|f_2|=\ln|f_1|-\ln \left|\frac{f_1'}{f_1}\right|+k=\ln\left(e^{k}\frac{f_1^2}{|f_1'|}\right),$$
where $k\in \mathbb R$;
therefore,
$f_2=c_0\frac{\displaystyle f_1^2}{\displaystyle f_1'}$, where $c_0\in \mathbb R\setminus \{0\}$.
\end{proof}

\begin{corollary}
For $I=\mathbb R^3$, if $f_1=f_2=:f(x^2)$, then the following assertions are equivalent:

(1) $(\mathbb R^3,\tilde g)$ is a flat Riemannian manifold;

(2) $f(x^2)=c_1e^{c_2x^2}$, where $c_1\in \mathbb R\setminus\{0\}$, $c_2\in\mathbb R$.
\end{corollary}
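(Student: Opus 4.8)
The plan is to obtain this as an immediate specialization of Proposition~\ref{ps11ax}. Putting $f_1=f_2=:f$ in the equivalence $(1)\Leftrightarrow(2)$ of that proposition, the flatness equation
$$\frac{f_1'}{f_1}\,\frac{f_2'}{f_2}+\frac{f_1''}{f_1}=2\left(\frac{f_1'}{f_1}\right)^2$$
becomes
$$\left(\frac{f'}{f}\right)^2+\frac{f''}{f}=2\left(\frac{f'}{f}\right)^2,$$
i.e. $f''f=(f')^2$, which says precisely that $\left(\frac{f'}{f}\right)'=0$.

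From here I would integrate twice. Since $f$ is nowhere zero, $f'/f$ is a well-defined smooth function on the connected interval $\mathbb R$, and $\left(f'/f\right)'=0$ forces $f'/f\equiv c_2$ for some $c_2\in\mathbb R$; integrating once more gives $f(x^2)=c_1e^{c_2x^2}$, where $c_1\in\mathbb R\setminus\{0\}$ (it cannot be zero, again because $f$ never vanishes). This settles $(1)\Rightarrow(2)$; I would also point out that the constant-$f$ alternative appearing in part~(3) of Proposition~\ref{ps11ax} is here absorbed into the exponential family by allowing $c_2=0$, which is why the present statement has a single case.

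For $(2)\Rightarrow(1)$ I would just substitute back: if $f=c_1e^{c_2x^2}$, then $f'=c_2f$ and $f''=c_2^2f$, so $f''f=c_2^2f^2=(f')^2$, and Proposition~\ref{ps11ax} yields flatness. (Equivalently, one may plug $f_1=f_2=f$ into the relation $f_2=c_0f_1^2/f_1'$ of part~(3), which reduces to $f'=c_0f$ and again produces $f=c_1e^{c_0x^2}$.) I do not expect any genuine difficulty here; the only thing to keep straight is the bookkeeping of constants — in particular, the nowhere-vanishing hypothesis on $f$ is what legitimizes dividing by $f$ throughout and what forces $c_1\neq 0$.
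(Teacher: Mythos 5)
Your proposal is correct and follows essentially the same route as the paper: specializing the flatness equation of Proposition~\ref{ps11ax} to $f_1=f_2=f$ yields $f''f-(f')^2=0$, i.e.\ $\left(\frac{f'}{f}\right)'=0$, whose solutions on $\mathbb R$ are exactly $f(x^2)=c_1e^{c_2x^2}$ with $c_1\neq 0$. Your additional remarks (the constant case being absorbed via $c_2=0$, and the direct verification of $(2)\Rightarrow(1)$) are sound elaborations of what the paper leaves implicit.
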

\begin{proof}
(1) is equivalent to $ff''-(f')^2=0$, that is, $\left(\frac{\displaystyle f'}{\displaystyle f}\right)'=0$, with the solution
$f(x^2)=c_1e^{c_2x^2}$, where $c_1\in \mathbb R\setminus\{0\}$, $c_2\in\mathbb R$,
hence we get the conclusion.
\end{proof}

\begin{proposition}\label{ps11b}
If $f_1=f_1(x^2)$, $f_2=f_2(x^1)$, then the following assertions are equivalent:

(1) $(I,\tilde g)$ is a flat Riemannian manifold;

(2) $f_1$ and $f_2$ satisfy the equation
$$\frac{\displaystyle f_1''f_1-2(f_1')^2}{\displaystyle f_1^4}=-\frac{\displaystyle  f_2''f_2-2(f_2')^2}{\displaystyle f_2^4} \ =\textit{constant}.$$
\end{proposition}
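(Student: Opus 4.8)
The plan is to follow the pattern of the preceding propositions in this section: specialize the quantities $a,b,c,d$ to the present hypotheses, identify the surviving components of the Riemann tensor, and convert their vanishing into a differential relation between $f_1$ and $f_2$.

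First, since $f_1=f_1(x^2)$ and $f_2=f_2(x^1)$, the defining quantities reduce to $b=d=0$, $a=f_2\,\dfrac{f_1'}{f_1}$, $c=f_1\,\dfrac{f_2'}{f_2}$. The key observation is that neither $a$ nor $c$ involves $x^3$, hence $E_3(a)=E_3(c)=0$. Substituting $b=d=0$ together with these vanishings into the curvature formulas recorded above, every component of $R$ vanishes except
$$R(E_1,E_2)E_2=\big[E_1(c)+E_2(a)-a^2-c^2\big]E_1,\qquad R(E_2,E_1)E_1=\big[E_1(c)+E_2(a)-a^2-c^2\big]E_2.$$
Therefore $(I,\tilde g)$ is flat if and only if $E_1(c)+E_2(a)-a^2-c^2=0$.

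Next, one computes the two derivative terms. Because $E_1=f_1\frac{\partial}{\partial x^1}$ and $f_1=f_1(x^2)$, one gets $E_1(c)=f_1^2\left(\dfrac{f_2'}{f_2}\right)'=f_1^2\,\dfrac{f_2''f_2-(f_2')^2}{f_2^2}$, and, symmetrically, $E_2(a)=f_2^2\left(\dfrac{f_1'}{f_1}\right)'=f_2^2\,\dfrac{f_1''f_1-(f_1')^2}{f_1^2}$. Since $a^2=f_2^2\,\dfrac{(f_1')^2}{f_1^2}$ and $c^2=f_1^2\,\dfrac{(f_2')^2}{f_2^2}$, the flatness condition becomes
$$f_1^2\,\frac{f_2''f_2-2(f_2')^2}{f_2^2}+f_2^2\,\frac{f_1''f_1-2(f_1')^2}{f_1^2}=0,$$
and dividing through by $f_1^2f_2^2$ (allowed, since $f_1$ and $f_2$ are nowhere zero) this rearranges to
$$\frac{f_1''f_1-2(f_1')^2}{f_1^4}=-\frac{f_2''f_2-2(f_2')^2}{f_2^4}.$$

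Finally, the left-hand side depends only on $x^2$ while the right-hand side depends only on $x^1$, so both must equal one and the same real constant, which is precisely the claimed equivalence. I do not anticipate a genuine obstacle here; the one point requiring care is to verify that, in contrast with the earlier mixed-variable cases, none of the $E_3$-related curvature pieces survive — and this is exactly what the independence of $a$ and $c$ from $x^3$ guarantees.
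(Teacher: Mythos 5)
Your proposal is correct and follows essentially the same route as the paper: specialize $a,b,c,d$, observe that only the $R(E_1,E_2)E_2$ and $R(E_2,E_1)E_1$ components survive, reduce flatness to $E_1(c)+E_2(a)=a^2+c^2$, divide by $f_1^2f_2^2$, and conclude by separation of variables. The computations of $E_1(c)$ and $E_2(a)$ match the paper's, so nothing further is needed.
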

\begin{proof}
In this case, $a=f_2\frac{\displaystyle f_1'}{\displaystyle f_1}$, $b=0$, $c=f_1\frac{\displaystyle f_2'}{\displaystyle f_2}$, $d=0$, and
$$R(E_1,E_2)E_2=[E_1(c)+E_2(a)-a^2-c^2]E_1, \ \ R(E_2,E_1)E_1=[E_1(c)+E_2(a)-a^2-c^2]E_2,$$
$$R(E_1,E_3)E_3=R(E_3,E_1)E_1=R(E_2,E_3)E_3=R(E_3,E_2)E_2=0.$$
Then, $R=0$ if and only if
$$ f_1\frac{\partial c}{\partial x^1}+f_2\frac{\partial a}{\partial x^2}=a^2+c^2,$$
that is,
$$f_1^2\frac{\displaystyle f_2''f_2-(f_2')^2}{\displaystyle f_2^2}+f_2^2\frac{\displaystyle f_1''f_1-(f_1')^2}{\displaystyle f_1^2}=f_2^2\left(\frac{\displaystyle f_1'}{\displaystyle f_1}\right)^2+f_1^2\left(\frac{\displaystyle f_2'}{\displaystyle f_2}\right)^2,$$
which is equivalent to
$$\frac{\displaystyle f_1''f_1-2(f_1')^2}{\displaystyle f_1^4}=-\frac{\displaystyle  f_2''f_2-2(f_2')^2}{\displaystyle f_2^4}.$$
Since $f_1$ depends only on $x^2$ and $f_2$ depends only on $x^1$, we deduce that the above ratio must be a constant.
\end{proof}

\begin{remark}
We shall now look on the condition (2) from Proposition \ref{ps11b} satisfied by the two functions, namely
$$\frac{\displaystyle f''f-2(f')^2}{\displaystyle f^4}=k \in \mathbb R,$$
when $f$ is a real function defined on a real interval.
Let us notice that
$$\frac{\displaystyle f''f-2(f')^2}{\displaystyle f^4}=-\left(\frac{1}{f}\right)''\frac{1}{f}.$$
Denoting by $h:=\frac{\displaystyle 1}{\displaystyle f}$, we have
$$h''h=-k.$$
Let $r\in \mathbb R$ and let $J\subseteq \mathbb R$ be an open interval such that $0\notin J$ and $-2k\ln |y|+r>0$ for any $y\in J$. Let $F$ be an antiderivative on $J$ of the function
$$y\mapsto \frac{1}{\sqrt{-2k\ln |y|+r}}.$$
Then, $F'(y)>0$ for any $y\in J$; therefore, $F$ is strictly increasing on $J$, hence, it is invertible onto its image. Let $\varepsilon\in\{\pm 1\}$ and let $I_J:=\varepsilon\left(F(J)-c_0\right)$, where
$c_0\in \mathbb R$. Then, $$h:I_J\rightarrow\mathbb R, \ \ h(x):=F^{-1}(\varepsilon x+c_0)$$
satisfies $h''(x)h(x)=-k$ for any $x\in I_J$, and we get
$$f:I_J\rightarrow\mathbb R, \ \ f(x)=\frac{1}{F^{-1}(\varepsilon x+c_0)}.$$
\end{remark}

\begin{proposition}\label{ps11a}
If $f_1=f_1(x^2)$, $f_2=f_2(x^3)$, then the following assertions are equivalent:

(1) $(I,\tilde g)$ is a flat Riemannian manifold;

(2) $f_1$ and $f_2$ are constant, or $f_1=k_1\in \mathbb R\setminus \{0\}$ and $f_2(x^3)=\frac{\displaystyle c_1}{\displaystyle x^3-c_2}$, where $c_1\in \mathbb R\setminus \{0\}$, $c_2\in \mathbb R\setminus I_3$, or $f_2=k_2\in \mathbb R\setminus \{0\}$ and $f_1(x^2)=\frac{\displaystyle c_1}{\displaystyle x^2-c_2}$, where $c_1\in \mathbb R\setminus \{0\}$, $c_2\in \mathbb R\setminus I_2$.
\end{proposition}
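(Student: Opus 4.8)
The plan is to proceed exactly as in the proofs of Propositions~\ref{ps1} and~\ref{ps11ax}: specialise the general curvature formulas of Section~2 to the present situation, reduce the condition $R=0$ to a system of ordinary differential equations, and then integrate that system.

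First I would record the structure functions. Under the hypothesis $f_1=f_1(x^2)$, $f_2=f_2(x^3)$ one has $b=c=0$, $a=f_2\,\frac{f_1'}{f_1}$ and $d=\frac{f_2'}{f_2}$, hence $E_1(a)=0$, $E_2(a)=f_2^{2}\Big(\frac{f_1'}{f_1}\Big)'$ and $E_3(a)=f_2'\,\frac{f_1'}{f_1}$. Substituting into the list of $R(E_i,E_j)E_k$, every term involving $b$ or $c$ vanishes, so the surviving components split into those along the $(1,2)$- and $(2,3)$-planes, governed by
$$R(E_1,E_2)E_2=f_2^{2}\left[\Big(\frac{f_1'}{f_1}\Big)'-\Big(\frac{f_1'}{f_1}\Big)^{2}\right]E_1, \qquad R(E_2,E_3)E_3=\left[\Big(\frac{f_2'}{f_2}\Big)'-\Big(\frac{f_2'}{f_2}\Big)^{2}\right]E_2$$
(together with their images under the symmetries of $R$), and the "mixed" ones --- the $E_3$-part of $R(E_2,E_1)E_1$, and $R(E_3,E_1)E_1$, $R(E_1,E_2)E_3$, $R(E_3,E_1)E_2$ --- each a multiple of $E_3(a)=f_2'\,\frac{f_1'}{f_1}$. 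Since $f_1$ and $f_2$ are nowhere zero, $R=0$ is then equivalent to the system
$$\Big(\frac{f_1'}{f_1}\Big)'=\Big(\frac{f_1'}{f_1}\Big)^{2}, \qquad \Big(\frac{f_2'}{f_2}\Big)'=\Big(\frac{f_2'}{f_2}\Big)^{2}, \qquad f_1'\,f_2'=0.$$

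Next I would exploit that $f_1'$ depends only on $x^2$ and $f_2'$ only on $x^3$: the identity $f_1'(x^2)f_2'(x^3)=0$ on $I_2\times I_3$ forces $f_1'\equiv 0$ or $f_2'\equiv 0$. If $f_1$ is constant, the first ODE holds automatically and only $\big(\frac{f_2'}{f_2}\big)'=\big(\frac{f_2'}{f_2}\big)^{2}$ remains; this is the equation already solved in the proof of Proposition~\ref{ps1}, whose solutions are $f_2$ constant or $f_2(x^3)=\frac{c_1}{x^3-c_2}$ with $c_1\in\mathbb R\setminus\{0\}$, $c_2\in\mathbb R\setminus I_3$ (the last condition ensuring that $f_2$ is smooth and nowhere zero on $I_3$). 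The case $f_2$ constant is symmetric and gives $f_1$ constant or $f_1(x^2)=\frac{c_1}{x^2-c_2}$ with $c_1\in\mathbb R\setminus\{0\}$, $c_2\in\mathbb R\setminus I_2$. Collecting the alternatives yields assertion~(2); for the converse one substitutes each of these functions back into the displayed system.

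The computation is routine, and I expect the only genuinely delicate point to be the scalar ODE $u'=u^{2}$ with $u=\frac{g'}{g}$, handled as in the earlier propositions: since $u\equiv 0$ is a solution, uniqueness implies that $u$ is either identically zero (so $g$ is constant) or nowhere zero, in which case $-\frac{1}{u}=x-c_2$ integrates to $g=\frac{c_1}{x-c_2}$, and one must then keep track of the constraint $c_2\notin I_i$ that guarantees $g$ remains defined and nonvanishing. The remaining work is bookkeeping: verifying that all the mixed curvature components really collapse to multiples of the single quantity $E_3(a)$, so that the one scalar equation $f_1'f_2'=0$ accounts for all of them.
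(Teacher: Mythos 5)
Your proposal is correct and follows essentially the same route as the paper: the same structure functions $a=f_2 f_1'/f_1$, $d=f_2'/f_2$, the same reduction of $R=0$ to the system $f_1'f_2'=0$, $\bigl(\frac{f_1'}{f_1}\bigr)'=\bigl(\frac{f_1'}{f_1}\bigr)^2$, $\bigl(\frac{f_2'}{f_2}\bigr)'=\bigl(\frac{f_2'}{f_2}\bigr)^2$, and the same integration into the three alternatives of assertion (2). Your explicit remarks that the separated-variables identity $f_1'(x^2)f_2'(x^3)=0$ forces one factor to vanish identically, and that uniqueness for $u'=u^2$ rules out solutions vanishing only somewhere, are points the paper leaves implicit but are exactly the right justification.
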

\begin{proof}
In this case, $a=f_2\frac{\displaystyle f_1'}{\displaystyle f_1}$, $b=c=0$, $d=\frac{\displaystyle f_2'}{\displaystyle f_2}$, and
$$R(E_1,E_2)E_2=[E_2(a)-a^2]E_1, \ \ R(E_2,E_1)E_1=[E_2(a)-a^2]E_2+E_3(a)E_3,$$
$$R(E_2,E_3)E_3=[E_3(d)-d^2]E_2,\ \
R(E_3,E_2)E_2=[E_3(d)-d^2]E_3,$$
$$R(E_3,E_1)E_1=E_3(a)E_2,\ \
R(E_1,E_2)E_3=E_3(a)E_1,\ \
R(E_3,E_1)E_2=-E_3(a)E_1,$$
$$R(E_1,E_3)E_3=R(E_2,E_3)E_1=0.$$
Then, $R=0$ if and only if
\begin{equation*}
\left\{
    \begin{aligned}
 &      f_1'f_2'=0 \\
 &      \left(\frac{\displaystyle f_1'}{\displaystyle f_1}\right)'=\left(\frac{\displaystyle f_1'}{\displaystyle f_1}\right)^2 \\
 &      \left(\frac{\displaystyle f_2'}{\displaystyle f_2}\right)'=\left(\frac{\displaystyle f_2'}{\displaystyle f_2}\right)^2
    \end{aligned}
  \right..
\end{equation*}
Therefore,

$\bullet$ $f_1'=0$ and $f_2'=0$, or

$\bullet$ $f_1'=0$ and $-\frac{\displaystyle 1}{\frac{\displaystyle f_2'(x^3)}{\displaystyle f_2(x^3)}}=x^3+c_0$ \Big(i.e., $f_2(x^3)=\frac{\displaystyle c_1}{\displaystyle x^3-c_2}$, where $c_1\in \mathbb R\setminus \{0\}$, $c_2\in \mathbb R\setminus I_3$\Big), or, similarly

$\bullet$ $f_1(x^2)=\frac{\displaystyle c_1}{\displaystyle x^2-c_2}$, where $c_1\in \mathbb R\setminus \{0\}$, $c_2\in \mathbb R\setminus I_2$ and $f_2'=0$.
\end{proof}

\begin{corollary}
For $I=\mathbb R^3$, if $f_1=f_1(x^2)$, $f_2=f_2(x^3)$, then the following assertions are equivalent:

(1) $(\mathbb R^3,\tilde g)$ is a flat Riemannian manifold;

(2) $f_1$ and $f_2$ are constant.
\end{corollary}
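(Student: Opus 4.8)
The plan is to deduce this corollary directly from Proposition~\ref{ps11a}, which already classifies flatness for an arbitrary box $I=I_1\times I_2\times I_3$ under the hypothesis $f_1=f_1(x^2)$, $f_2=f_2(x^3)$. First I would specialize that proposition to the case $I=\mathbb R^3$, so that $I_2=I_3=\mathbb R$ and hence $\mathbb R\setminus I_2=\mathbb R\setminus I_3=\emptyset$. Then I would inspect the three alternatives in condition (2) of Proposition~\ref{ps11a}: the first is "$f_1$ and $f_2$ constant", while the other two are $f_2(x^3)=c_1/(x^3-c_2)$ with $c_2\in\mathbb R\setminus I_3$, and symmetrically $f_1(x^2)=c_1/(x^2-c_2)$ with $c_2\in\mathbb R\setminus I_2$. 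Each of these last two requires the existence of a real number $c_2$ lying outside all of $\mathbb R$, which is impossible; so they are vacuous when $I=\mathbb R^3$, and only the first alternative survives. This gives the implication (1)~$\Rightarrow$~(2).

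For the converse (2)~$\Rightarrow$~(1), I would simply note that if $f_1$ and $f_2$ are constant then $a=b=c=d=0$, so every Riemann curvature component computed after the Koszul formula vanishes and $(\mathbb R^3,\tilde g)$ is flat; equivalently, this is just the "both constant" branch of Proposition~\ref{ps11a}(2) read in the nontrivial direction, or a special instance of Proposition~\ref{ps}. Thus the two assertions are equivalent.

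I do not anticipate any genuine obstacle: the whole content of the corollary is that passing to $I=\mathbb R^3$ eliminates the hyperbola-type solutions $f_i=c_1/(x^i-c_2)$, because the admissibility condition $c_2\notin I_i$ forces the pole to avoid the entire real line. The only point requiring a moment's care is to confirm that discarding those two alternatives from Proposition~\ref{ps11a}(2) really does leave exactly "$f_1$ and $f_2$ constant" and nothing further, which is immediate from the explicit form of that statement.
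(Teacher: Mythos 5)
Your proposal is correct and is essentially the paper's own argument: the paper likewise derives the corollary immediately from Proposition~\ref{ps11a}, the point being exactly the one you identify, namely that for $I_2=I_3=\mathbb R$ the conditions $c_2\in\mathbb R\setminus I_2$ and $c_2\in\mathbb R\setminus I_3$ are unsatisfiable, so the hyperbola-type branches are vacuous and only the ``both constant'' alternative remains.
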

\begin{proof}
It follows immediately from Proposition \ref{ps11a}.
\end{proof}

\section{$I$ as an almost $\eta$-Ricci soliton}

Let $V=\sum_{k=1}^3V^kE_k$ and $\eta=\sum_{k=1}^3\eta^ke_k$, where $e_k$ is the dual $1$-form of $E_k$ for $k\in \nolinebreak \{1,2,3\}$. From (\ref{11}) we get the equations that define an almost $\eta$-Ricci soliton $(I, \tilde g,V,\lambda,\mu)$:
$$\frac{1}{2}\Big\{E_i(V^j)+E_j(V^i)+\sum_{k=1}^3V^k[\tilde g(\nabla_{E_i}E_k,E_j)+\tilde g(E_i,\nabla_{E_j}E_k)]\Big\}+\R(E_i,E_j)+$$$$+\lambda \delta^{ij}+\mu \eta^i\eta^j=0$$
for any $(i,j)\in\{(1,1), (2,2), (3,3), (1,2), (1,3), (2,3)\}$, which is equivalent to the following system
\pagebreak
\begin{equation}\label{17}
\left\{
    \begin{aligned}
     & E_1(V^1)-aV^2-bV^3+\R(E_1,E_1)+\lambda+\mu (\eta^1)^2=0 \\
   &   E_2(V^2)-cV^1-dV^3+\R(E_2,E_2)+\lambda+\mu (\eta^2)^2=0 \\
  &    E_3(V^3)+\R(E_3,E_3)+\lambda+\mu (\eta^3)^2=0 \\
   &   \frac{\displaystyle 1}{\displaystyle 2}[E_1(V^2)+E_2(V^1)+aV^1+cV^2]+\mu \eta^1\eta^2=0 \\
   &   \frac{\displaystyle 1}{\displaystyle 2}[E_1(V^3)+E_3(V^1)+bV^1]+\R(E_1,E_3)+\mu \eta^1\eta^3=0 \\
  &    \frac{\displaystyle 1}{\displaystyle 2}[E_2(V^3)+E_3(V^2)+dV^2]+\R(E_2,E_3)+\mu \eta^2\eta^3=0
    \end{aligned}
  \right..
\end{equation}

We shall further consider the cases when the potential vector field of the almost $\eta$-Ricci soliton $(I,\tilde g, \lambda,\mu)$ is $V=\frac{\displaystyle \partial}{\displaystyle \partial x^3}$, or when $\eta=dx^3$.

\subsection{Almost $\eta$-Ricci solitons with $V=\frac{\displaystyle \partial}{\displaystyle \partial x^3}$}

If $V^1=V^2=0$ and $V^3=1$, then the system (\ref{17}) becomes
\begin{equation}\label{18}
\left\{
    \begin{aligned}
     & -b+\R(E_1,E_1)+\lambda+\mu (\eta^1)^2=0 \\
     & -d+\R(E_2,E_2)+\lambda+\mu (\eta^2)^2=0 \\
     & \R(E_3,E_3)+\lambda+\mu (\eta^3)^2=0 \\
     & \mu \eta^1\eta^2=0 \\
     & \R(E_1,E_3)+\mu \eta^1\eta^3=0 \\
     &\R(E_2,E_3)+\mu \eta^2\eta^3=0
    \end{aligned}
  \right..
\end{equation}

\begin{proposition}
Let $(I, \tilde g, V,\lambda, \mu)$ be an almost $\eta$-Ricci soliton with $V=\frac{\displaystyle \partial}{\displaystyle \partial x^3}$. If $f_i=\nolinebreak f_i(x^i)$ for $i\in \{1,2\}$ and $\mu$ is nowhere zero on $I$, then $\eta=0$ and $V$ is a Killing vector field.
\end{proposition}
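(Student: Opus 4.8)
The plan is to first invoke the flatness already established in Proposition~\ref{ps}. Under the hypothesis $f_i=f_i(x^i)$ for $i\in\{1,2\}$ we have $a=b=c=d=0$, so all the Ricci components $\R(E_i,E_j)$ vanish and the manifold is flat. Substituting $V=E_3$ (i.e. $V^1=V^2=0$, $V^3=1$) together with $a=b=c=d=0$ and $\R\equiv 0$ into the system \eqref{18}, the six soliton equations collapse to
\begin{equation*}
\left\{
\begin{aligned}
&\lambda+\mu(\eta^1)^2=0\\
&\lambda+\mu(\eta^2)^2=0\\
&\lambda+\mu(\eta^3)^2=0\\
&\mu\,\eta^1\eta^2=0,\quad \mu\,\eta^1\eta^3=0,\quad \mu\,\eta^2\eta^3=0
\end{aligned}
\right..
\end{equation*}

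Next I would argue pointwise. Fix $p\in I$. Since $\mu(p)\neq 0$, the last three equations give $\eta^1(p)\eta^2(p)=\eta^1(p)\eta^3(p)=\eta^2(p)\eta^3(p)=0$, so at $p$ at most one of the three components $\eta^k(p)$ can be nonzero. From the first three equations, $(\eta^1(p))^2=(\eta^2(p))^2=(\eta^3(p))^2=-\lambda(p)/\mu(p)$. If this common value were strictly positive, then all three components would be nonzero at $p$, contradicting the vanishing of their pairwise products; hence $-\lambda(p)/\mu(p)=0$, which forces $\lambda(p)=0$ (as $\mu(p)\neq 0$) and then $\eta^1(p)=\eta^2(p)=\eta^3(p)=0$. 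Since $p$ was arbitrary, $\lambda\equiv 0$ and $\eta=0$ on $I$.

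Finally, with $\eta=0$ and $\lambda=0$ and the manifold flat, equation \eqref{11} reduces to $\tfrac12\pounds_V\tilde g=0$, so $V=\frac{\partial}{\partial x^3}$ is a Killing vector field; equivalently, one notes directly from the connection formulas that $\nabla_{E_1}E_3=-bE_1=0$, $\nabla_{E_2}E_3=-dE_2=0$ and $\nabla_{E_3}E_i=0$, so $V=E_3$ is parallel, hence Killing. There is no serious obstacle here — the only point requiring a little care is the pointwise reasoning eliminating the case $\lambda(p)\neq 0$, which must be carried out at each point separately since a priori $\eta$ could vanish on a proper subset of $I$; the nowhere-vanishing hypothesis on $\mu$ is exactly what makes this dichotomy work uniformly.
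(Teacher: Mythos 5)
Your proposal is correct and follows essentially the same route as the paper: reduce the soliton system to $\lambda+\mu(\eta^i)^2=0$ and $\mu\,\eta^i\eta^j=0$ using $a=b=c=d=0$ and $\R=0$, deduce $(\eta^1)^2=(\eta^2)^2=(\eta^3)^2$ hence $\eta=0$ and $\lambda=0$, and conclude $\pounds_V\tilde g=0$ from \eqref{11}. Your explicit pointwise dichotomy (and the alternative check that $E_3$ is parallel) merely spells out details the paper leaves implicit.
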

\begin{proof}
In this case, we have $a=b=c=d=0$, and $\R(E_i,E_j)=0$ for any $i,j\in \{1,2,3\}$, and (\ref{18})
becomes
\begin{equation}\label{18a}
\left\{
    \begin{aligned}
     &\lambda+\mu (\eta^1)^2=0 \\
     & \lambda+\mu (\eta^2)^2=0 \\
     & \lambda+\mu (\eta^3)^2=0 \\
     & \mu \eta^1\eta^2=\mu \eta^1\eta^3=\mu \eta^2\eta^3=0
    \end{aligned}
  \right..
\end{equation}
Since $\mu$ is nowhere zero, we get $(\eta^1)^2=(\eta^2)^2=(\eta^3)^2$, hence, $\eta^i=0$ for $i\in \{1,2,3\}$, and $\lambda=0$ from (\ref{18a}); therefore, $\pounds _{V}\tilde g=0$ by means of (\ref{11}).
\end{proof}

\begin{proposition}\label{pl1}
Let $(I,\tilde g,V,\lambda,\mu)$ be an almost $\eta$-Ricci soliton with $V=\nolinebreak \frac{\displaystyle \partial}{\displaystyle \partial x^3}$. If $f_i=f_i(x^3)$ for $i\in \{1,2\}$, $\eta^i=:f$ for $i\in \{1,2,3\}$, and $\mu$ is nowhere zero on $I$, then $\eta=0$.
Moreover,

(i) if one of the functions $f_1$ and $f_2$ is constant, then the other one is constant, too; in this case, $(I,\tilde g)$ is a flat Riemannian manifold, $\lambda=0$, and $V$ is a Killing vector field;

(ii) if $\frac{\displaystyle f_1}{\displaystyle f_2}$ is constant, then $f_i(x^3)=c_ie^{c_0e^{-x^3}}$ for $i\in \{1,2\}$, where $c_0\in \mathbb R$,
\linebreak 
$c_1,c_2\in \mathbb R\setminus\{0\}$.
\end{proposition}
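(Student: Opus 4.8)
The plan is to specialise the system~(\ref{18}) to the hypotheses $f_i = f_i(x^3)$ and $\eta^1=\eta^2=\eta^3 = f$. From Proposition~\ref{ps1}'s setup we have $a = c = 0$, $b = f_1'/f_1$, $d = f_2'/f_2$, so the relevant Ricci components are $\R(E_1,E_1) = E_3(b) - b^2 - bd = b' - b^2 - bd$, $\R(E_2,E_2) = d' - d^2 - bd$, $\R(E_3,E_3) = b' + d' - b^2 - d^2$, $\R(E_1,E_3) = \R(E_2,E_3) = 0$ (since $c = cd = 0$ and $a = ab = 0$). Plugging $\eta^i = f$ into the last three equations of~(\ref{18}) gives $\mu f^2 = 0$ from the fourth equation, and since $\mu$ is nowhere zero this forces $f = 0$, i.e.\ $\eta = 0$. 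The fifth and sixth equations then reduce to $\R(E_1,E_3) = 0$ and $\R(E_2,E_3) = 0$, which are already satisfied identically, so they impose nothing.

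With $\eta = 0$, the first three equations of~(\ref{18}) become $\lambda = b - \R(E_1,E_1) = b - b' + b^2 + bd$, $\lambda = d - d' + d^2 + bd$, and $\lambda = -b' - d' + b^2 + d^2$. Subtracting these pairwise yields a coupled ODE system in $b$ and $d$ (equivalently in $f_1'/f_1$ and $f_2'/f_2$). For part~(i): if, say, $f_1$ is constant then $b = 0$, and the first equation gives $\lambda = 0$; then the second reads $0 = d - d' + d^2$ and the third reads $0 = -d' + d^2$; subtracting, $d = 0$, so $f_2$ is constant too. Flatness then follows from Proposition~\ref{ps1}(2) (the "both constant" branch), $\lambda = 0$, and $\pounds_V\tilde g = 0$ follows from~(\ref{11}) since $\R = 0$ and $\lambda = \mu\cdot 0 = 0$. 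The symmetric argument handles $f_2$ constant.

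For part~(ii): if $f_1/f_2 =: \kappa$ is constant then $f_1' / f_1 = f_2'/f_2$, i.e.\ $b = d$. The three $\lambda$-equations collapse: the first two coincide (as they must, by symmetry under $b \leftrightarrow d$), giving $\lambda = b - b' + 2b^2$, while the third gives $\lambda = -2b' + 2b^2$. Equating, $b - b' + 2b^2 = -2b' + 2b^2$, hence $b' = -b$, i.e.\ $(\ln|f_i|)'' = -(\ln|f_i|)'$. Writing $u := (\ln|f_i|)' = b$, we solve $u' = -u$ to get $u = c_0 e^{-x^3}$, and then integrating $(\ln|f_i|)' = c_0 e^{-x^3}$ gives $\ln|f_i| = -c_0 e^{-x^3} + \text{const}$. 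Wait — this yields $f_i = c_i e^{-c_0 e^{-x^3}}$; to match the stated form $f_i(x^3) = c_i e^{c_0 e^{-x^3}}$ one simply relabels $c_0 \mapsto -c_0$, which is harmless since $c_0$ ranges over all of $\mathbb R$. The constants $c_1, c_2$ are the two integration constants (one per function), nonzero because $f_1, f_2$ are nowhere zero.

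The main obstacle is bookkeeping rather than conceptual: one must carefully extract the correct specialised Ricci components and verify that equations five and six of~(\ref{18}) are vacuous under the hypotheses, and then be careful with the logarithmic-derivative substitution and the resulting sign/relabelling of $c_0$ when reconciling with the stated answer. One should also double-check the degenerate sub-case of part~(ii) where $c_0 = 0$: then $b = d = 0$, $f_1, f_2$ are constant, which is consistent with (and a special instance of) both~(i) and~(ii), so no contradiction arises.
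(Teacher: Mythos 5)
Your proposal is correct and follows essentially the same route as the paper: specialize system (\ref{18}), deduce $\mu f^2=0$ hence $\eta=0$, and then compare the three resulting expressions for $\lambda$ pairwise (the paper packages these differences as $b(d+1)=d^2-d'$ and $d(b+1)=b^2-b'$, but the content is identical), arriving at $d=0$ in case (i) and $b'=-b$ in case (ii). Your observation about the sign of $c_0$ is apt — the paper's own proof likewise produces $f_i=c_ie^{-c_0e^{-x^3}}$ and the discrepancy with the statement is only a harmless relabelling of $c_0\in\mathbb R$.
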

\begin{proof}
In this case, we have $a=0$, $b=\frac{\displaystyle f_1'}{\displaystyle f_1}$, $c=0$, $d=\frac{\displaystyle f_2'}{\displaystyle f_2}$, and
$$\R(E_1,E_1)=b'-b^2-bd,\ \ \R(E_2,E_2)=d'-d^2-bd, \ \ \R(E_3,E_3)=b'+d'-b^2-d^2,$$
$$\R(E_1,E_2)=\R(E_1,E_3)=\R(E_2,E_3)=0,$$
and (\ref{18})
becomes
\begin{equation}\label{18ai}
\left\{
    \begin{aligned}
   & -b+b'-b^2-bd+\lambda+\mu (\eta^1)^2=0 \\
   & -d+d'-d^2-bd+\lambda+\mu (\eta^2)^2=0 \\
   &   b'+d'-b^2-d^2+\lambda+\mu (\eta^3)^2=0 \\
   &   \mu \eta^1\eta^2=\mu \eta^1\eta^3=\mu \eta^2\eta^3=0
    \end{aligned}
  \right..
\end{equation}
Since $\mu$ is nowhere zero, (\ref{18ai}) is equivalent to
\begin{equation}\label{18vb}
\left\{
    \begin{aligned}
     & b(d+1)=d^2-d' \\
     & d(b+1)=b^2-b' \\
     & \lambda=b^2-b'+d^2-d'\\
& f^2=0
    \end{aligned}
  \right.,
\end{equation}
which implies $\eta=0$.

(i) $f_1$ is constant if and only if $b=0$, and from (\ref{18vb}), we deduce that $b=0$ if and only if $d=0$, i.e., if and only if $f_2$ is constant. In this case, $R=0$, $\lambda=0$, and $\pounds_V\tilde g=0$.

(ii) If $\frac{\displaystyle f_1}{\displaystyle f_2}$ is constant, then $b=d$, and from (\ref{18vb}), we get
\begin{equation}\label{18vbn}
\left\{
    \begin{aligned}
    &  b'=-b \\
    &  \lambda=2(b^2-b')
    \end{aligned}
  \right..
\end{equation}
If $b=d=0$, then $f_1$ and $f_2$ are constant and $\lambda =0$.
If $b=d\neq 0$ (hence, if they are nowhere zero), then
\begin{equation}\label{18vbm}
\left\{
    \begin{aligned}
    &  \frac{\displaystyle b'}{\displaystyle b}=-1 \\
    &  \lambda=2(b^2-b')
    \end{aligned}
  \right.,
\end{equation}
from where we get
$$\frac{f_i'(x^3)}{f_i(x^3)}=b(x^3)=c_0e^{-x^3}, \ \ c_0\in \mathbb R\setminus \{0\}, i\in \{1,2\},$$
which, by integration, gives
$f_i(x^3)=c_ie^{-c_0e^{-x^3}}$ for $i\in \{1,2\}$, where $c_1,c_2\in \mathbb R\setminus \{0\}$.
\end{proof}

\begin{corollary}
Under the hypotheses of Proposition \ref{pl1}, if $f_1=f_2=:\nolinebreak f(x^3)$, then $f(x^3)=c_1e^{c_2e^{-x^3}}$, where $c_1\in \mathbb R\setminus\{0\}$, $c_2\in \mathbb R$.
\end{corollary}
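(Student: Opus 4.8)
The plan is to specialize Proposition \ref{pl1}(ii) to the case $f_1=f_2$. Since the ratio $\frac{f_1}{f_2}\equiv 1$ is trivially constant, part (ii) of that proposition applies verbatim and produces $f_1=c_1e^{c_0e^{-x^3}}$ and $f_2=c_2e^{c_0e^{-x^3}}$ for some $c_0\in\mathbb R$ and $c_1,c_2\in\mathbb R\setminus\{0\}$. Imposing $f_1=f_2$ forces $c_1=c_2$, and relabelling the constant $c_0$ as $c_2$ gives the asserted form $f(x^3)=c_1e^{c_2e^{-x^3}}$ with $c_1\in\mathbb R\setminus\{0\}$ and $c_2\in\mathbb R$; the value $c_2=0$ is exactly the constant solution coming from part (i), which is why the statement allows $c_2=0$ rather than $c_2\neq 0$.

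If one prefers a self-contained argument, one can rerun the computation inside the proof of Proposition \ref{pl1} under the extra assumption $f_1=f_2=:f$. Then $b=\frac{f'}{f}=d$, so the first two equations of the soliton system \eqref{18vb} coincide and collapse to $b(b+1)=b^2-b'$, i.e. $b'=-b$, while the last equation still yields $\eta=0$ (as $\mu$ is nowhere zero) and the third one determines $\lambda$. Solving the linear ODE $b'=-b$: either $b\equiv 0$, in which case $f$ is a nonzero constant; or $b$ is nowhere zero with $\frac{b'}{b}=-1$, hence $b(x^3)=c_0e^{-x^3}$ for some $c_0\in\mathbb R\setminus\{0\}$. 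Integrating $\frac{f'}{f}=b$ then gives $\ln|f|=-c_0e^{-x^3}+\text{const}$, that is, $f(x^3)=c_1e^{-c_0e^{-x^3}}$; after the substitution $c_2:=-c_0$ (and allowing $c_2=0$ to absorb the constant case) this is precisely $f(x^3)=c_1e^{c_2e^{-x^3}}$.

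There is no genuine obstacle here, since all the analytic work is already contained in Proposition \ref{pl1}; the only point needing a little care is the bookkeeping of constants — checking that the two subcases $b\equiv 0$ and $b$ nowhere zero merge into the single family with parameter range $c_1\neq 0$, $c_2\in\mathbb R$, and that the sign in the exponent matches the statement, which it does after the harmless relabelling $c_0\mapsto\pm c_0$.
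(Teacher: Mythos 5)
Your proposal is correct and follows the same route as the paper, whose entire proof of this corollary is the citation ``It follows from Proposition \ref{pl1} (ii)''; your first paragraph is exactly that argument, with the constant bookkeeping (merging the $b\equiv 0$ case into $c_2=0$ and fixing the sign of the exponent) done carefully. The self-contained second paragraph simply re-runs the computation already inside the proof of Proposition \ref{pl1}(ii), so nothing genuinely new is added, and no gap is present.
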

\begin{proof}
It follows from Proposition \ref{pl1} (ii).
\end{proof}

\begin{proposition}
Let $(I,\tilde g,V,\lambda,\mu)$ be an almost $\eta$-Ricci soliton with $V=\frac{\displaystyle \partial}{\displaystyle \partial x^3}$. If $f_1=f_1(x^1)$, $f_2=f_2(x^3)$, $\eta^i=:f$ for $i\in \{1,2,3\}$, and $\mu$ is nowhere zero on $I$, then $\eta=0$ and $\lambda=0$. Moreover, $f_2$ is constant and $V$ is a Killing vector field.
\end{proposition}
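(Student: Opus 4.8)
The plan is to specialize the soliton system (\ref{18}) to the present data, reusing the curvature bookkeeping from Proposition \ref{ps11}, whose hypotheses coincide with ours. First I would observe that $f_1=f_1(x^1)$ and $f_2=f_2(x^3)$ force $\partial f_1/\partial x^2=\partial f_1/\partial x^3=0$ and $\partial f_2/\partial x^1=0$, so that $a=b=c=0$ while $d=f_2'/f_2$ depends only on $x^3$. Substituting this into the general Ricci formulas recorded above gives $\R(E_1,E_1)=0$, $\R(E_2,E_2)=\R(E_3,E_3)=d'-d^2$, and $\R(E_1,E_2)=\R(E_1,E_3)=\R(E_2,E_3)=0$; the fact worth isolating is the coincidence $\R(E_2,E_2)=\R(E_3,E_3)$.

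Next I would write out the six equations of (\ref{18}) with $\eta^1=\eta^2=\eta^3=f$ and the data above. The three off-diagonal equations all collapse to $\mu f^2=0$, and since $\mu$ is nowhere zero this forces $f=0$, that is, $\eta=0$. The first diagonal equation then becomes $\lambda=0$. The remaining two diagonal equations reduce to $-d+d'-d^2=0$ and $d'-d^2=0$; subtracting the second from the first -- which is exactly where $\R(E_2,E_2)=\R(E_3,E_3)$ is used -- yields $d=0$, hence $f_2'=0$, so $f_2$ is constant.

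To finish, once $d=0$ every component of $\R$ vanishes, and together with $\lambda=0$ and $\eta=0$, equation (\ref{11}) gives $\frac{1}{2}\pounds_V\tilde g=0$; equivalently, with $f_2$ constant no coefficient of $\tilde g$ depends on $x^3$, so $\pounds_{\partial/\partial x^3}\tilde g=0$ directly. Hence $V$ is a Killing vector field.

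I do not anticipate a genuine obstacle: the argument is a short finite computation. The only point worth flagging is that $\eta=0$ is extracted from the purely algebraic equation $\mu\eta^1\eta^2=0$ before any differential relation for $d$ is invoked, and that $d=0$ is obtained not from $d'=d^2$ by itself but from combining it with $-d+d'-d^2=0$.
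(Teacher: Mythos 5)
Your argument is correct and follows essentially the same route as the paper: both reduce the system (\ref{18}) using $a=b=c=0$, $d=f_2'/f_2$, extract $f=0$ from the off-diagonal equations via $\mu\neq 0$, and obtain $d=0$ (hence $f_2$ constant, $\lambda=0$, and $\pounds_V\tilde g=0$) from the diagonal equations. Your explicit remark that $d=0$ comes from subtracting $d'-d^2=0$ from $-d+d'-d^2=0$ just spells out what the paper compresses into the single line $d=d'-d^2=0$.
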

\begin{proof}
In this case, we have $a=b=c=0$, $d=\frac{\displaystyle f_2'}{\displaystyle f_2}$, and
$$\R(E_2,E_2)=\R(E_3,E_3)=d'-d^2,$$
$$\R(E_1,E_1)=\R(E_1,E_2)=\R(E_1,E_3)=\R(E_2,E_3)=0,$$
and (\ref{18}) becomes
\begin{equation}\label{18aia}
\left\{
    \begin{aligned}
   &   \lambda+\mu (\eta^1)^2=0 \\
  &    -d+d'-d^2+\lambda+\mu (\eta^2)^2=0 \\
  &    d'-d^2+\lambda+\mu (\eta^3)^2=0 \\
 &     \mu \eta^1\eta^2=\mu \eta^1\eta^3=\mu \eta^2\eta^3=0
    \end{aligned}
  \right..
\end{equation}
Since $\mu$ is nowhere zero, (\ref{18aia}) is equivalent to
\begin{equation*}
\left\{
    \begin{aligned}
    &  d=d'-d^2=0 \\
   &   \lambda=-\mu f^2 \\
   &   f^2=0
    \end{aligned}
  \right..
\end{equation*}
It follows that $\eta=0$, $\lambda=0$, and $\pounds_V\tilde g=0$.
\end{proof}

\begin{proposition}\label{pl2}
Let $(I,\tilde g,V,\lambda,\mu)$ be an almost $\eta$-Ricci soliton with $V=\nolinebreak \frac{\displaystyle \partial}{\displaystyle \partial x^3}$. If $f_i=f_i(x^2)$ for $i\in\{1,2\}$, $\eta^i=:f$ for $i\in \{1,2,3\}$, and $\mu$ is nowhere zero on $I$, then $\eta=0$ and $\lambda=0$. Moreover, $f_1$ is constant, or $f_1'$ is nowhere zero and $f_2=c_0\frac{\displaystyle f_1^2}{\displaystyle f_1'}$, where $c_0\in \mathbb R\setminus \{0\}$.
\end{proposition}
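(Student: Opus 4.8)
The plan is to specialize the system~(\ref{18}) to the present hypotheses and read off the resulting constraints. First I would record that, since $f_1$ and $f_2$ depend only on $x^2$, the notations of Section~2 give $a=f_2\frac{f_1'}{f_1}$ and $b=c=d=0$ (where $'$ denotes $\frac{\partial}{\partial x^2}$); consequently the Ricci list yields $\R(E_1,E_1)=\R(E_2,E_2)=E_2(a)-a^2$, $\R(E_3,E_3)=0$, and $\R(E_1,E_2)=\R(E_1,E_3)=\R(E_2,E_3)=0$. Substituting $b=d=0$, these Ricci components, and $\eta^1=\eta^2=\eta^3=f$ into~(\ref{18}) reduces the six equations to $E_2(a)-a^2+\lambda+\mu f^2=0$ (the first two), $\lambda+\mu f^2=0$ (the third), and $\mu f^2=0$ (the last three).

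Since $\mu$ is nowhere zero, the equation $\mu f^2=0$ forces $f\equiv 0$, hence $\eta=0$; then the third equation gives $\lambda=0$, and the first equation collapses to $E_2(a)=a^2$, i.e.\ $f_2a'=a^2$. This is exactly the condition $R=0$ obtained in the proof of Proposition~\ref{ps11ax}, so by the equivalence $(1)\Leftrightarrow(3)$ established there I conclude that either $f_1$ is constant, or $f_1'$ is nowhere zero and $f_2=c_0\frac{f_1^2}{f_1'}$ with $c_0\in\mathbb{R}\setminus\{0\}$, which is the assertion.

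There is essentially no obstacle here: the argument is a substitution followed by the observation that the one surviving differential constraint coincides with the one already analyzed in Proposition~\ref{ps11ax}. The only points demanding a little care are the bookkeeping of the $\eta$-terms---because all three components $\eta^i$ equal the single function $f$, every $\mu\eta^i\eta^j$ and every $\mu(\eta^i)^2$ equals $\mu f^2$---and the verification, after unwinding $E_2=f_2\frac{\partial}{\partial x^2}$, that $E_2(a)-a^2$ is the same quantity whose vanishing is the flatness condition $f_2a'=a^2$ of Proposition~\ref{ps11ax}.
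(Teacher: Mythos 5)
Your proposal is correct and follows essentially the same route as the paper: specialize system (\ref{18}) using $a=f_2\frac{f_1'}{f_1}$, $b=c=d=0$, use the nowhere-vanishing of $\mu$ to force $f=0$ (hence $\eta=0$ and $\lambda=0$), and reduce the surviving equation to the flatness condition $f_2a'=a^2$ handled by Proposition \ref{ps11ax}. The only cosmetic difference is that the paper expands $E_2(a)-a^2$ explicitly in terms of $f_1,f_2$ before citing Proposition \ref{ps11ax}, whereas you keep it in the compact form $f_2a'=a^2$; both are the same computation.
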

\begin{proof}
In this case, we have $a=f_2\frac{\displaystyle f_1'}{\displaystyle f_1}$, $b=c=d=0$, and
$$\R(E_1,E_1)=\R(E_2,E_2)=E_2(a)-a^2,$$
$$\R(E_1,E_2)=\R(E_1,E_3)=\R(E_2,E_3)=\R(E_3,E_3)=0,$$
and (\ref{18}) becomes
\begin{equation}\label{18v}
\left\{
    \begin{aligned}
  &    f_2f_2'\frac{\displaystyle f_1'}{\displaystyle f_1}+\frac{\displaystyle f_2^2}{\displaystyle f_1^2}[f_1''f_1-2(f_1')^2]+\lambda+\mu (\eta^1)^2=0 \\
 &     f_2f_2'\frac{\displaystyle f_1'}{\displaystyle f_1}+\frac{\displaystyle f_2^2}{\displaystyle f_1^2}[f_1''f_1-2(f_1')^2]+\lambda+\mu (\eta^2)^2=0 \\
 &     \lambda+\mu (\eta^3)^2=0 \\
  &    \mu \eta^1\eta^2=\mu \eta^1\eta^3=\mu \eta^2\eta^3=0
    \end{aligned}
  \right..
\end{equation}
Since $\mu$ is nowhere zero, (\ref{18v}) is equivalent to
\begin{equation*}
\left\{
    \begin{aligned}
  &    f_2f_2'\frac{\displaystyle f_1'}{\displaystyle f_1}+\frac{\displaystyle f_2^2}{\displaystyle f_1^2}[f_1''f_1-2(f_1')^2]+\lambda+\mu f^2=0 \\
&      \lambda=-\mu f^2 \\
 &     f^2=0
    \end{aligned}
  \right..
\end{equation*}
It follows that $\eta=0$, $\lambda=0$, and, from the first equation of the previous system, we get
$$\frac{\displaystyle f_1'}{\displaystyle f_1}\frac{\displaystyle f_2'}{\displaystyle f_2}+\frac{\displaystyle f_1''}{\displaystyle f_1}=2\left(\frac{\displaystyle f_1'}{\displaystyle f_1}\right)^2.$$
From Proposition \ref{ps11ax}, we deduce that $f_1$ is constant, or $f_1'$ is nowhere zero and $f_2=c_0\frac{\displaystyle f_1^2}{\displaystyle f_1'}$, where $c_0\in \mathbb R\setminus \{0\}$.
\end{proof}

\begin{corollary}
Under the hypotheses of Proposition \ref{pl2}, if $f_1=f_2=:\nolinebreak f(x^2)$, then $f(x^2)=c_1e^{c_2x^2}$, where $c_1\in \mathbb R\setminus\{0\}$, $c_2\in \mathbb R$.
\end{corollary}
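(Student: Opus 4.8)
The plan is to specialize Proposition \ref{pl2} to the case $f_1 = f_2 =: f(x^2)$ and simplify the resulting relation. By Proposition \ref{pl2}, under the stated hypotheses we are in one of two situations: either $f_1$ is constant, or $f_1'$ is nowhere zero and $f_2 = c_0 \dfrac{f_1^2}{f_1'}$ with $c_0 \in \mathbb R \setminus \{0\}$. First I would dispose of the constant case: if $f$ is constant, then $f = c_1$ for some $c_1 \in \mathbb R \setminus \{0\}$, which is exactly $f(x^2) = c_1 e^{c_2 x^2}$ with $c_2 = 0$, so assertion (2) holds.

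Next I would treat the generic case. Setting $f_1 = f_2 = f$ in the relation $f_2 = c_0 \dfrac{f_1^2}{f_1'}$ yields $f = c_0 \dfrac{f^2}{f'}$, and since $f$ is nowhere zero (being a defining function of the metric), this simplifies to $f' = \dfrac{1}{c_0}\, f$, i.e., $\dfrac{f'}{f} = \dfrac{1}{c_0} =: c_2 \in \mathbb R \setminus \{0\}$. Integrating on the interval $I_2$ gives $\ln|f| = c_2 x^2 + \text{const}$, hence $f(x^2) = c_1 e^{c_2 x^2}$ with $c_1 \in \mathbb R \setminus \{0\}$. Combining both cases gives the implication (1) $\Rightarrow$ (2); the converse (2) $\Rightarrow$ (1) is immediate, since $f(x^2) = c_1 e^{c_2 x^2}$ satisfies $f' = c_2 f$, so $f = \dfrac{1}{c_2}\dfrac{f^2}{f'}$ when $c_2 \neq 0$ (the $c_0 = 1/c_2$ branch of Proposition \ref{pl2}) and $f$ is constant when $c_2 = 0$ (the constant branch), both of which force $\eta = 0$, $\lambda = 0$ by Proposition \ref{pl2}.

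I do not expect any genuine obstacle here: the corollary is a direct algebraic specialization of Proposition \ref{pl2}. The only point requiring a moment's care is the bookkeeping that the constant solution is not a degenerate exception but simply the $c_2 = 0$ member of the exponential family, so that the two branches of Proposition \ref{pl2} collapse into the single clean statement $f(x^2) = c_1 e^{c_2 x^2}$.
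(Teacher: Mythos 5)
Your argument is correct in substance and reaches the right conclusion, but it takes a slightly different route from the paper. The paper's proof does not use the final dichotomy of Proposition \ref{pl2} at all: it goes back to the differential equation $\frac{f_1'}{f_1}\frac{f_2'}{f_2}+\frac{f_1''}{f_1}=2\left(\frac{f_1'}{f_1}\right)^2$ established in that proof, which for $f_1=f_2=f$ collapses to $ff''-(f')^2=0$, i.e.\ $\left(\frac{f'}{f}\right)'=0$, and integrates once. You instead specialize the stated conclusion $f_2=c_0\frac{f_1^2}{f_1'}$ to $f_1=f_2=f$; both are one-line computations and both subsume the constant case as $c_2=0$, so nothing is gained or lost either way. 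Two small points: your simplification of $f=c_0\frac{f^2}{f'}$ should give $f'=c_0f$, not $f'=\frac{1}{c_0}f$ (harmless, since $c_0$ and $1/c_0$ both range over $\mathbb R\setminus\{0\}$ and you rename the constant anyway); and the corollary as stated is a one-way implication, not an equivalence with numbered assertions, so your discussion of the converse ``(2) $\Rightarrow$ (1)'' is superfluous here, though not wrong.
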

\begin{proof}
It follows from Proposition \ref{pl2} that $ff''-(f')^2=0$, that is, $\left(\frac{\displaystyle f'}{\displaystyle f}\right)'=\nolinebreak 0$, with the solution
$f(x^2)=c_1e^{c_2x^2}$, where $c_1\in \mathbb R\setminus\{0\}$, $c_2\in\mathbb R$.
\end{proof}

\begin{proposition}\label{pl3}
Let $(I,\tilde g,V,\lambda,\mu)$ be an almost $\eta$-Ricci soliton with $V=\nolinebreak \frac{\displaystyle \partial}{\displaystyle \partial x^3}$. If $f_1=f_1(x^2)$, $f_2=f_2(x^1)$, $\eta^i=:f$ for $i\in \{1,2,3\}$, and $\mu$ is nowhere zero on $I$, then $\eta=0$ and $\lambda=0$. Moreover, $$\frac{\displaystyle f_1''f_1-2(f_1')^2}{\displaystyle f_1^4}=-\frac{\displaystyle  f_2''f_2-2(f_2')^2}{\displaystyle f_2^4} \ =\textit{constant}.$$
\end{proposition}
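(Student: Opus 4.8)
The plan is to proceed exactly as in the proofs of Propositions~\ref{pl1} and~\ref{pl2}: specialize the curvature data to the present configuration of $f_1,f_2$, feed it into the soliton system~(\ref{18}), use that $\mu$ is nowhere zero to force $\eta=0$ and $\lambda=0$, and then recognize the surviving equation as the one already analyzed in Proposition~\ref{ps11b}.

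First I would record that, just as in Proposition~\ref{ps11b}, the hypotheses $f_1=f_1(x^2)$ and $f_2=f_2(x^1)$ give $a=f_2\frac{f_1'}{f_1}$, $b=0$, $c=f_1\frac{f_2'}{f_2}$, $d=0$. Since neither $f_1$ nor $f_2$ depends on $x^3$, we have $E_3(a)=E_3(c)=0$, so the Ricci formulas computed earlier reduce to $\R(E_3,E_3)=\R(E_1,E_2)=\R(E_1,E_3)=\R(E_2,E_3)=0$ and $\R(E_1,E_1)=\R(E_2,E_2)=E_1(c)+E_2(a)-a^2-c^2$. Substituting this, together with $V=\frac{\partial}{\partial x^3}$ and $\eta^1=\eta^2=\eta^3=f$, into~(\ref{18}), the three equations coming from the pairs $(1,2)$, $(1,3)$, $(2,3)$ all collapse to $\mu f^2=0$; since $\mu$ is nowhere zero this yields $f=0$, hence $\eta=0$, and then the $(3,3)$-equation gives $\lambda=-\mu f^2=0$.

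What remains is the $(1,1)$-equation (which coincides with the $(2,2)$-one), namely $E_1(c)+E_2(a)-a^2-c^2=0$, i.e. $f_1\frac{\partial c}{\partial x^1}+f_2\frac{\partial a}{\partial x^2}=a^2+c^2$. From here I would reproduce the computation in the proof of Proposition~\ref{ps11b} verbatim: expanding $a$ and $c$ and simplifying turns this identity into $\frac{f_1''f_1-2(f_1')^2}{f_1^4}=-\frac{f_2''f_2-2(f_2')^2}{f_2^4}$, and since the left-hand side depends only on $x^2$ while the right-hand side depends only on $x^1$, both sides must be equal to one and the same real constant on the product domain $I$, which is the asserted identity. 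There is no genuine obstacle here; the only point needing care is the bookkeeping that makes all the off-diagonal Ricci components vanish in this case — this is exactly what forces the soliton system to collapse to the single equation treated in Proposition~\ref{ps11b} — together with the routine separation-of-variables argument for the constancy claim.
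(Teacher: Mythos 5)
Your proposal is correct and follows essentially the same route as the paper: specialize $a,b,c,d$ and the Ricci components, use that $\mu$ is nowhere zero to force $f=0$ (hence $\eta=0$) and $\lambda=0$ from the off-diagonal and $(3,3)$-equations, and then reduce the surviving diagonal equation to the identity of Proposition~\ref{ps11b} with the separation-of-variables constancy argument. No gaps.
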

\begin{proof}
In this case, we have $a=f_2\frac{\displaystyle f_1'}{\displaystyle f_1}$, $b=0$, $c=f_1\frac{\displaystyle f_2'}{\displaystyle f_2}$, $d=0$, and
$$\R(E_1,E_1)=\R(E_2,E_2)=E_1(c)+E_2(a)-a^2-c^2,$$
$$\R(E_1,E_2)=\R(E_1,E_3)=\R(E_2,E_3)=\R(E_3,E_3)=0,$$
and (\ref{18}) becomes
\begin{equation}\label{18m}
\left\{
    \begin{aligned}
&   f_1 \frac{\displaystyle \partial c}{\displaystyle \partial x^1}+f_2 \frac{\displaystyle \partial a}{\displaystyle \partial x^2}-a^2-c^2+\lambda+\mu (\eta^1)^2=0 \\
&      f_1 \frac{\displaystyle \partial c}{\displaystyle \partial x^1}+f_2 \frac{\displaystyle \partial a}{\displaystyle \partial x^2}-a^2-c^2+\lambda+\mu (\eta^2)^2=0 \\
&      \lambda+\mu (\eta^3)^2=0 \\
 &     \mu \eta^1\eta^2=\mu \eta^1\eta^3=\mu \eta^2\eta^3=0
    \end{aligned}
  \right..
\end{equation}
Since $\mu$ is nowhere zero and $\eta^i=f$, (\ref{18m}) is equivalent to
\begin{equation*}
\left\{
    \begin{aligned}
 &      f_1 \frac{\displaystyle \partial c}{\displaystyle \partial x^1}+f_2 \frac{\displaystyle \partial a}{\displaystyle \partial x^2}-a^2-c^2+\lambda+\mu f^2=0 \\
&      \lambda=-\mu f^2 \\
&      f^2=0
    \end{aligned}
  \right..
\end{equation*}
It follows that $\eta=0$, $\lambda=0$, and, from the first equation of the previous system, we get the relation between $f_1$ and $f_2$. Since $f_1$ depends only on $x^2$ and $f_2$ depends only on $x^1$, we deduce that the obtained ratio must be a constant.
\end{proof}

\begin{proposition}\label{pl3a}
Let $(I,\tilde g,V,\lambda,\mu)$ be an almost $\eta$-Ricci soliton with $V=\frac{\displaystyle \partial}{\displaystyle \partial x^3}$. If $f_1=f_1(x^2)$, $f_2=f_2(x^3)$, $\eta^i=:f$ for $i\in \{1,2,3\}$, and $\mu$ is nowhere zero on $I$, then $\eta=0$ and $\lambda=0$. Moreover, $V$ is a Killing vector field, $f_2$ is constant, and either $f_1$ is constant or $f_1(x^2)=\displaystyle\frac{c_1}{x^2-c_2}$, where $c_1\in \mathbb{R}\setminus\{0\}$, $c_2\in \mathbb{R}\setminus I_2$.
\end{proposition}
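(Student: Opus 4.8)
The plan is to imitate the arguments used for the preceding propositions in this subsection. First I would record that, since $f_1=f_1(x^2)$ and $f_2=f_2(x^3)$, the notations reduce to $a=f_2\frac{f_1'}{f_1}$, $b=c=0$, $d=\frac{f_2'}{f_2}$ --- exactly the data appearing in Proposition \ref{ps11a}. Substituting $b=c=0$ into the Ricci formulas gives $\R(E_1,E_1)=E_2(a)-a^2$, $\R(E_2,E_2)=E_2(a)+E_3(d)-a^2-d^2$, $\R(E_3,E_3)=E_3(d)-d^2$, $\R(E_1,E_2)=\R(E_1,E_3)=0$ and $\R(E_2,E_3)=E_3(a)$, where $E_2(a)-a^2=f_2^2\big[(\frac{f_1'}{f_1})'-(\frac{f_1'}{f_1})^2\big]$, $E_3(d)-d^2=(\frac{f_2'}{f_2})'-(\frac{f_2'}{f_2})^2$, and $E_3(a)=f_2'\frac{f_1'}{f_1}$. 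Feeding these, together with $b=0$ and $\eta^i=f$, into the system (\ref{18}), I expect the fourth and fifth equations to collapse to $\mu f^2=0$; since $\mu$ is nowhere zero this forces $f\equiv 0$, hence $\eta=0$, and the sixth equation then reduces to $\R(E_2,E_3)=0$, i.e. $f_1'f_2'=0$.

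With $\eta=0$, the first three equations of (\ref{18}) read $\R(E_1,E_1)+\lambda=0$, $-d+\R(E_2,E_2)+\lambda=0$, $\R(E_3,E_3)+\lambda=0$. Writing $P:=\R(E_1,E_1)=E_2(a)-a^2$ and $Q:=\R(E_3,E_3)=E_3(d)-d^2$, so that $\R(E_2,E_2)=P+Q$, I would eliminate $\lambda$: the first and third equations give $P=Q$ (and $\lambda=-P$), and subtracting the first from the second gives $-d+Q=0$. Hence $Q=d$, $P=d$, $\lambda=-d$. Combining this with $f_1'f_2'=0$ (equivalently, $f_1$ is constant or $f_2$ is constant): if $f_1$ is constant then $\frac{f_1'}{f_1}\equiv 0$, whence $P=0$ and so $d=0$; if $f_2$ is constant then $d=\frac{f_2'}{f_2}=0$ directly. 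In either case $d\equiv 0$, so $f_2'=0$, i.e. $f_2$ is a nonzero constant; then $Q=0$, $\lambda=-d=0$, and $P=d=0$ together with $f_2\neq 0$ give $\big(\frac{f_1'}{f_1}\big)'=\big(\frac{f_1'}{f_1}\big)^2$.

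It remains to solve this ODE for $f_1$, exactly as in the proofs of Propositions \ref{ps1}, \ref{ps11} and \ref{ps11a}: either $\frac{f_1'}{f_1}\equiv 0$, so $f_1$ is constant, or $\frac{f_1'}{f_1}$ is nowhere zero and $\frac{f_1'}{f_1}=-\frac{1}{x^2-c_2}$, which upon integration yields $f_1(x^2)=\frac{c_1}{x^2-c_2}$; since $f_1$ is smooth and nowhere zero on $I_2$, necessarily $c_1\in\mathbb R\setminus\{0\}$ and $c_2\in\mathbb R\setminus I_2$. Finally, because $f_2$ is constant and $f_1$ is constant or of the form $\frac{c_1}{x^2-c_2}$, Proposition \ref{ps11a} shows that $(I,\tilde g)$ is flat, so $\R=0$; combined with $\eta=0$ and $\lambda=0$, equation (\ref{11}) yields $\pounds_V\tilde g=0$, that is, $V$ is a Killing vector field. (Equivalently, once $f_2$ is constant the metric $\tilde g$ no longer depends on $x^3$, so $V=\frac{\partial}{\partial x^3}$ is trivially Killing.)

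No step is delicate here: the computation is a specialization of the one in Proposition \ref{ps11a}, supplemented by the soliton system (\ref{18}). The only point requiring a little care is excluding the possibility that $f_1$ is constant while $f_2$ is not, which the flatness statement of Proposition \ref{ps11a} would by itself permit; this is exactly where the extra soliton equation $-d+\R(E_2,E_2)+\lambda=0$ enters, forcing $Q=d$, hence $P=d$, hence $d=0$ (and thus $f_2$ constant) as soon as $f_1$ is constant.
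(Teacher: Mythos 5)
Your proposal is correct and follows essentially the same route as the paper: specializing $a,b,c,d$ and the Ricci components, extracting $f^2=0$ (hence $\eta=0$) and $E_3(a)=0$ (hence $f_1'f_2'=0$) from the off-diagonal equations, then eliminating $\lambda$ from the diagonal ones to force $d=0$, $\lambda=0$, and the ODE $\left(\frac{f_1'}{f_1}\right)'=\left(\frac{f_1'}{f_1}\right)^2$. The only cosmetic difference is that the paper concludes $\pounds_V\tilde g=0$ directly from $b=d=0$, while you route through flatness and equation (\ref{11}); both are valid.
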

\begin{proof}
In this case, we have $a=f_2\frac{\displaystyle f_1'}{\displaystyle f_1}$, $b=c=0$, $d=\frac{\displaystyle f_2'}{\displaystyle f_2}$, and
$$\R(E_1,E_1)=E_2(a)-a^2,\ \
\R(E_2,E_2)=E_2(a)-a^2+E_3(d)-d^2, \ \ \R(E_3,E_3)=E_3(d)-d^2,$$
$$\R(E_1,E_2)=\R(E_1,E_3)=0, \ \ \R(E_2,E_3)=E_3(a),$$
and (\ref{18}) becomes
\pagebreak
\begin{equation}\label{18ma}
\left\{
    \begin{aligned}
&   f_2 \frac{\displaystyle \partial a}{\displaystyle \partial x^2}-a^2+\lambda+\mu (\eta^1)^2=0 \\
&  -d+f_2 \frac{\displaystyle \partial a}{\displaystyle \partial x^2}-a^2+d'-d^2+\lambda+\mu (\eta^2)^2=0 \\
&      d'-d^2+\lambda+\mu (\eta^3)^2=0 \\
 &     \mu \eta^1\eta^2=\mu \eta^1\eta^3=0\\
& \frac{\displaystyle \partial a}{\displaystyle \partial x^3}+\mu \eta^2\eta^3=0
    \end{aligned}
  \right..
\end{equation}
Since $\mu$ is nowhere zero and $\eta^i=f$, (\ref{18ma}) is equivalent to
\begin{equation*}
\left\{
    \begin{aligned}
&   f_2 \frac{\displaystyle \partial a}{\displaystyle \partial x^2}-a^2-d=0 \\
&  d=d'-d^2 \\
&    \lambda=-d \\
 &     f^2=0\\
& \frac{\displaystyle \partial a}{\displaystyle \partial x^3}=0
    \end{aligned}
  \right..
\end{equation*}
It follows that $\eta=0$ and $f_1'f_2'=0$, which, together with the second equation of the system, implies that one of the functions $f_1$ and $f_2$ must be constant. If $f_1=k_1\in\mathbb R\setminus \{0\}$, then $a=0$, $d=0$ (hence, $f_2$ is constant), and $\lambda=0$. If $f_2=k_2\in\mathbb R\setminus \{0\}$, then $d=0$, $\lambda=0$, and $$\left(\frac{\displaystyle f_1'}{\displaystyle f_1}\right)'=\left(\frac{\displaystyle f_1'}{\displaystyle f_1}\right)^2.$$
Then, either $f_1$ is a constant, too, or $f_1'\neq 0$, in which case, by integration, we obtain
$-\frac{1}{\frac{\displaystyle f_1'(x^2)}{\displaystyle f_1(x^2)}}=x^2+c_0$, i.e., $f_1(x^2)=\frac{\displaystyle c_1}{\displaystyle x^2-c_2}$, where $c_1\in \mathbb R\setminus \{0\}$, $c_2\in \mathbb R\setminus I_2$.
Since $b=d=0$, we deduce that $\pounds_V\tilde g=0$.
\end{proof}

\subsection{Almost $\eta$-Ricci solitons with $\eta=dx^3$}

If $\eta^1=\eta^2=0$ and $\eta^3=1$, then the system (\ref{17}) becomes
\begin{equation}\label{19}
\left\{
    \begin{aligned}
&      E_1(V^1)-aV^2-bV^3+\R(E_1,E_1)+\lambda=0 \\
 &     E_2(V^2)-cV^1-dV^3+\R(E_2,E_2)+\lambda=0 \\
 &     E_3(V^3)+\R(E_3,E_3)+\lambda+\mu=0 \\
 &     E_1(V^2)+E_2(V^1)+aV^1+cV^2=0 \\
&      \frac{\displaystyle 1}{\displaystyle 2}[E_1(V^3)+E_3(V^1)+bV^1]+\R(E_1,E_3)=0 \\
 &     \frac{\displaystyle 1}{\displaystyle 2}[E_2(V^3)+E_3(V^2)+dV^2]+\R(E_2,E_3)=0
    \end{aligned}
  \right..
\end{equation}

\begin{theorem}\label{nb}
Let $(I, \tilde g, V,\lambda, \mu)$ be an $\eta$-Ricci soliton with $\eta=dx^3$. If $f_i=f_i(x^i)$ for $i\in \{1,2\}$,
then
\begin{equation}\label{6262}
\left\{
    \begin{aligned}
& V^1(x^1,x^2,x^3)=-\lambda F_1(x^1)+c_1 F_2(x^2)+c_2x^3+c_3\\
& V^2(x^1,x^2,x^3)=-c_1 F_1(x^1)-\lambda F_2(x^2)+c_4x^3+c_5\\
& V^3(x^1,x^2,x^3)=-c_2F_1(x^1)-c_4F_2(x^2)-(\lambda+\mu)x^3+c_6
    \end{aligned}
  \right.,
\end{equation}
where $F_i$ is an antiderivative of $\frac{\displaystyle 1}{\displaystyle f_i}$ for $i\in\{1,2\}$ and $c_i\in \mathbb R$ for $i\in\nolinebreak \{1,2,3,4,5,6\}$.
\end{theorem}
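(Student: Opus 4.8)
The plan is to integrate the system (\ref{19}) head‑on, exploiting the strong degeneracy forced by $f_i=f_i(x^i)$. First, under this hypothesis we have, exactly as in the proof of Proposition \ref{ps}, $a=b=c=d=0$ and $\R(E_i,E_j)=0$ for all $i,j\in\{1,2,3\}$; moreover, since the soliton is an $\eta$-Ricci soliton (not merely an \emph{almost} one), $\lambda$ and $\mu$ are real constants. Hence (\ref{19}) collapses to three ``diagonal'' equations $E_1(V^1)=-\lambda$, $E_2(V^2)=-\lambda$, $E_3(V^3)=-(\lambda+\mu)$, together with three ``off-diagonal'' ones $E_1(V^2)+E_2(V^1)=0$, $E_1(V^3)+E_3(V^1)=0$, $E_2(V^3)+E_3(V^2)=0$.

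Next, let $F_i$ be an antiderivative of $1/f_i$ on $I_i$ for $i\in\{1,2\}$; it exists because $f_i$ is smooth and nowhere zero, and since $F_i'=1/f_i$ is nowhere zero, $F_i$ is strictly monotone, hence \emph{non-constant} — a fact I will use three times. Because $E_1=f_1\,\partial/\partial x^1$, $E_2=f_2\,\partial/\partial x^2$, $E_3=\partial/\partial x^3$, the diagonal equations integrate to $V^1=-\lambda F_1(x^1)+\varphi_1(x^2,x^3)$, $V^2=-\lambda F_2(x^2)+\varphi_2(x^1,x^3)$, $V^3=-(\lambda+\mu)x^3+\varphi_3(x^1,x^2)$ for some smooth functions $\varphi_1,\varphi_2,\varphi_3$.

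Then I would feed these into the off-diagonal equations one at a time, each time separating variables. The first becomes $f_1\,\partial_1\varphi_2=-f_2\,\partial_2\varphi_1$; the two sides depend on disjoint variables besides $x^3$, so each equals some $\psi(x^3)$, and integrating gives $\varphi_2=\psi(x^3)F_1(x^1)+q(x^3)$, $\varphi_1=-\psi(x^3)F_2(x^2)+p(x^3)$. Substituting into $E_1(V^3)+E_3(V^1)=0$ and differentiating once in $x^3$ yields $\psi''(x^3)F_2(x^2)=p''(x^3)$; since $F_2$ is non-constant this forces $\psi''\equiv p''\equiv 0$, so $\psi$ and $p$ are affine, and that same equation then integrates to determine $\varphi_3$ up to a function of $x^2$, namely $\varphi_3=(\psi_1 F_2(x^2)-p_1)F_1(x^1)+r(x^2)$ with $\psi_1,p_1$ the slopes of $\psi,p$. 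Finally, substituting the whole package into $E_2(V^3)+E_3(V^2)=0$ leaves an identity whose only $x^1$-dependent term is $2\psi_1 F_1(x^1)$; non-constancy of $F_1$ forces $\psi_1=0$, so $\psi$ is constant, and the remaining part $f_2\,r'(x^2)+q'(x^3)=0$ separates once more to show $q$ is affine and $r=-q_1 F_2(x^2)+\text{const}$.

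Collecting the expressions and renaming the six surviving integration constants ($c_1:=-\psi_0$, $c_2:=p_1$, $c_3:=p_0$, $c_4:=q_1$, $c_5:=q_0$, $c_6:=$ the additive constant of $r$) then gives exactly (\ref{6262}); conversely every such $V$ solves the reduced system, as all steps are reversible. The computation is elementary throughout; the only point one must not miss — the ``main obstacle'' in that sense — is the systematic use of ``$F_i$ is non-constant because $F_i'=1/f_i\neq 0$'' to kill the affine-in-$x^3$ freedom that a careless separation of variables would otherwise leave. (Conceptually everything is transparent after the isometric change of coordinates $y^i=F_i(x^i)$ for $i\in\{1,2\}$, $y^3=x^3$, which trivializes $\tilde g$ to the Euclidean metric $g_0$ with $E_i\mapsto\partial/\partial y^i$ and $dx^3\mapsto dy^3$: (\ref{6262}) is then just the classical description of the vector fields $V$ satisfying $\tfrac12\pounds_V g_0+\lambda g_0+\mu\,dy^3\otimes dy^3=0$.)
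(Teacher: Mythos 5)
Your proposal is correct and follows essentially the same route as the paper's proof: integrate the three diagonal equations to introduce the antiderivatives $F_1,F_2$, then separate variables in the three mixed equations, repeatedly using that $F_i'=1/f_i$ is nowhere zero (so $F_i$ is non-constant) to force the separated functions to be affine and finally to kill the remaining linear term. The only differences are cosmetic (which variable you differentiate with respect to, and the naming of the intermediate functions), so no further comment is needed.
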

\begin{proof}
In this case, we have $a=b=c=d=0$, and $\R(E_i,E_j)=0$ for any $i,j\in \{1,2,3\}$, and (\ref{19})
becomes
\begin{equation}\label{19b}
\left\{
    \begin{aligned}
   &   \frac{\displaystyle \partial V^1}{\displaystyle \partial x^1}=-\frac{\displaystyle \lambda}{\displaystyle f_1} \\
 &     \frac{\displaystyle \partial V^2}{\displaystyle \partial x^2}=-\frac{\displaystyle \lambda}{\displaystyle f_2}\\
 &     \frac{\displaystyle \partial V^3}{\displaystyle \partial x^3}=-(\lambda+\mu) \\
 &     f_1\frac{\displaystyle \partial V^2}{\displaystyle \partial x^1}=-f_2\frac{\displaystyle \partial V^1}{\displaystyle \partial x^2} \\
  &    f_1\frac{\displaystyle \partial V^3}{\displaystyle \partial x^1}=-\frac{\displaystyle \partial V^1}{\displaystyle \partial x^3} \\
  &    f_2\frac{\displaystyle \partial V^3}{\displaystyle \partial x^2}=-\frac{\displaystyle \partial V^2}{\displaystyle\partial x^3}
    \end{aligned}
  \right..
\end{equation}
Since $f_i$ depends only on $x^i$, from the first three equations of (\ref{19b}), we find that
\begin{equation*}
\left\{
    \begin{aligned}
& V^1(x^1,x^2,x^3)=-\lambda F_1(x^1)+h_1(x^2,x^3)\\
& V^2(x^1,x^2,x^3)=-\lambda F_2(x^2)+h_2(x^1,x^3)\\
& V^3(x^1,x^2,x^3)=-(\lambda +\mu)x^3+h_3(x^1,x^2)
\end{aligned}
  \right.,
\end{equation*}
where $F_i'=\frac{\displaystyle 1}{\displaystyle f_i}$ for $i\in\{1,2\}$. From the last three equations of (\ref{19b}), we infer:
\begin{align}
f_1(x^1)\frac{\displaystyle \partial h_2}{\displaystyle \partial x^1}(x^1,x^3)&=-f_2(x^2)\frac{\displaystyle \partial h_1}{\displaystyle \partial x^2}(x^2,x^3),\label{sdf}\\
f_1(x^1)\frac{\displaystyle \partial h_3}{\displaystyle \partial x^1}(x^1,x^2)&=-\frac{\displaystyle \partial h_1}{\displaystyle \partial x^3}(x^2,x^3),\label{12}\\
f_2(x^2)\frac{\displaystyle \partial h_3}{\displaystyle \partial x^2}(x^1,x^2)&=-\frac{\displaystyle \partial h_2}{\displaystyle \partial x^3}(x^1,x^3).\label{12a}
\end{align}
Denoting $f_2(x^2)\frac{\displaystyle \partial h_1}{\displaystyle \partial x^2}(x^2,x^3)=:\bar h_1(x^3)$, we have $$\frac{\displaystyle \partial h_1}{\displaystyle \partial x^2}(x^2,x^3)=\frac{\displaystyle \bar h_1(x^3)}{\displaystyle f_2(x^2)},$$ which, by integration, implies
$$h_1(x^2,x^3)=\bar h_1(x^3) F_2(x^2)+\hat h_1(x^3),$$
and, by replacing it in \eqref{12}, it gives
$$-\bar h_1'(x^3) F_2(x^2)-f_1(x^1)\frac{\displaystyle \partial h_3}{\displaystyle \partial x^1}(x^1,x^2)=\hat h_1'(x^3).$$
Differentiating the previous relation with respect to $x^2$, it implies
$$\bar h_1'(x^3)=-f_1(x^1)f_2(x^2)\frac{\displaystyle \partial^2 h_3}{\displaystyle \partial x^2\partial x^1}(x^1,x^2),$$
which must be a constant, let's say $c_1$. Therefore, $$\bar h_1(x^3)=c_1x^3+c_2$$ and
$$\frac{\displaystyle \partial^2 h_3}{\displaystyle \partial x^2\partial x^1}(x^1,x^2)=-\frac{\displaystyle c_1}{\displaystyle f_1(x^1)f_2(x^2)},$$ which, by double integration, implies
$$h_3(x^1,x^2)=-c_1F_1(x^1)F_2(x^2)+l_1(x^1)+l_2(x^2).$$
Also, from (\ref{sdf}), we have $\frac{\displaystyle \partial h_2}{\displaystyle \partial x^1}(x^1,x^3)=-\frac{\displaystyle \bar h_1(x^3)}{\displaystyle f_1(x^1)}$, which gives
$$h_2(x^1,x^3)=-\bar h_1(x^3) F_1(x^1)+\hat h_2(x^3).$$
From \eqref{12}, we find
$$\hat h_1'(x^3)=-f_1(x^1)l_1'(x^1),$$
which must be a constant, let's say, $c_3$,
which gives
$$\hat h_1(x^3)=c_3x^3+c_4.$$
Since $f_1(x^1)l_1'(x^1)=-c_3$, we get
$$l_1(x^1)=-c_3F_1(x^1)+c_5.$$
From \eqref{12a}, we find
$$\hat h_2'(x^3)=2c_1F_1(x^1)-f_2(x^2)l_2'(x^2),$$
which must be a constant, let's say, $c_6$,
which gives
$$\hat h_2(x^3)=c_6x^3+c_7.$$
Since $2c_1F_1(x^1)=f_2(x^2)l_2'(x^2)+c_6$ and $F_1$ is not constant, we get
$$c_1=0, \ \ l_2(x^2)=-c_6F_2(x^2)+c_8.$$
We have obtained
\begin{equation*}
\left\{
    \begin{aligned}
& V^1(x^1,x^2,x^3)=-\lambda F_1(x^1)+c_2 F_2(x^2)+c_3x^3+c_4\\
& V^2(x^1,x^2,x^3)=-\lambda F_2(x^2)-c_2 F_1(x^1)+c_6x^3+c_7\\
& V^3(x^1,x^2,x^3)=-(\lambda+\mu)x^3-c_3F_1(x^1)+c_5-c_6F_2(x^2)+c_8
\end{aligned}
  \right.,
\end{equation*}
which, by changing the indices of some constants, gives (\ref{6262}).
\end{proof}

\begin{example}
For $f_i=k_i\in \mathbb R\setminus \{0\}$, $i\in \{1,2\}$, the vector field $V$ with $V^1$, $V^2$ and $V^3$ given by:
\begin{equation*}
\left\{
    \begin{aligned}
&     V^1(x^1,x^2,x^3)=-\frac{\displaystyle \lambda}{\displaystyle k_1} x^1+\frac{\displaystyle c_1}{\displaystyle k_2} x^2+c_2x^3+c_3\\
& V^2(x^1,x^2,x^3)=-\frac{\displaystyle c_1}{\displaystyle k_1} x^1-\frac{\displaystyle \lambda}{\displaystyle k_2} x^2+c_4x^3+c_5\\
& V^3(x^1,x^2,x^3)=-\frac{\displaystyle c_2}{\displaystyle k_1} x^1-\frac{\displaystyle c_4}{\displaystyle k_2} x^2-(\lambda+\mu)x^3+c_6
    \end{aligned}
  \right.,
\end{equation*}
where $c_i\in \mathbb R$ for $i\in\{1,2,3,4,5,6\}$, is the potential vector field of the $\eta$-Ricci soliton $(I, \tilde g, \lambda,\mu)$ for $\eta=dx^3$.
\end{example}

\begin{remark}
If $f_i=f_i(x^i)$ for $i\in \{1,2\}$, then $V$ with $V^1$, $V^2$ and $V^3$ given by:
\begin{equation*}
\left\{
    \begin{aligned}
&     V^1(x^2,x^3)=c_1F_2(x^2)+c_2x^3+c_3\\
& V^2(x^1,x^3)=-c_1F_1(x^1)+c_4x^3+c_5\\
& V^3(x^1,x^2)=-c_2F_1(x^1)-c_4F_2(x^2)+c_6
    \end{aligned}
  \right.
\end{equation*}
is a Killing vector field on $(I,\tilde g)$, where $F_i$ is an antiderivative of $\frac{\displaystyle 1}{\displaystyle f_i}$ for $i\in\{1,2\}$ and $c_i\in \mathbb R$ for $i\in\{1,2,3,4,5,6\}$.
\end{remark}

\begin{theorem}\label{gs}
Let $(I, \tilde g, V,\lambda, \mu)$ be an almost $\eta$-Ricci soliton with $\eta=dx^3$. If 
\linebreak
$f_i=f_i(x^3)$ for $i\in \{1,2\}$, and $V^i=V^i(x^3)$ for $i\in \{1,2,3\}$, then:
\begin{equation*}
\left\{
    \begin{aligned}
& V^1=\frac{\displaystyle c_1}{\displaystyle f_1}\\
& V^2=\frac{\displaystyle c_2}{\displaystyle f_2}\\
& (b-d)V^3=(b-d)'-(b^2-d^2)\\
& (b-d)\lambda=b'd-bd'\\
& (b-d)^2\mu=-(b-d)[(b-d)''+b'd-bd']+[(b-d)']^2+(b-d)^2(b^2+d^2)
    \end{aligned}
  \right.,
\end{equation*}
where $c_1$, $c_2\in \mathbb R$.

We have the following cases.

(i) On any open interval $J_3\subseteq I_3$ on which $\left(\frac{\displaystyle f_1}{\displaystyle f_2}\right)'\neq 0$ everywhere (equivalent to $b\neq d$ on $J_3$), we have:
$$V^1=\frac{\displaystyle c_1}{\displaystyle f_1}, \ \
V^2=\frac{\displaystyle c_2}{\displaystyle f_2}, \ \
V^3=\frac{\displaystyle (b-d)'}{\displaystyle b-d}-(b+d),$$
$$\lambda=\frac{\displaystyle b'd-bd'}{\displaystyle b-d},\ \
\mu=-\frac{\displaystyle (b-d)''+b'd-bd'}{\displaystyle b-d}+\left(\frac{\displaystyle (b-d)'}{\displaystyle b-d}\right)^2+b^2+d^2,$$
where $c_1$, $c_2\in \mathbb R$.
\pagebreak

In particular:

\hspace{0.5cm} (1) if $f_1$ is constant on an open interval $J\subseteq J_3$ (from which $f_2'\neq 0$ everywhere on $J$), we have on $J$:
$$V^1=c_1,\ \
V^2=\frac{\displaystyle c_2}{\displaystyle f_2},\ \
V^3=\frac{\displaystyle d'-d^2}{\displaystyle d},$$
$$\lambda=0, \ \ \mu=-\frac{\displaystyle d''}{\displaystyle d}+\left(\frac{\displaystyle d'}{\displaystyle d}\right)^2+d^2,$$
where $c_1$, $c_2\in \mathbb R$;

\hspace{0.5cm} (2) if $f_2$ is constant on an open interval $J\subseteq J_3$ (from which $f_1'\neq 0$ everywhere on $J$), we have on $J$:
$$V^1=\frac{\displaystyle c_1}{\displaystyle f_1},\ \
V^2=c_2, \ \
V^3=\frac{\displaystyle b'-b^2}{\displaystyle b},$$
$$\lambda=0, \ \ \mu=-\frac{\displaystyle b''}{\displaystyle b}+\left(\frac{\displaystyle b'}{\displaystyle b}\right)^2+b^2,$$
where $c_1$, $c_2\in \mathbb R$.

(ii) On any open interval $J_3\subseteq I_3$ on which $\frac{\displaystyle f_1}{\displaystyle f_2}$ is constant (equivalent to $b=d$ on $J_3$), we have:
$$V^1=\frac{\displaystyle c_1}{\displaystyle f_1}, \ \ V^2=\frac{\displaystyle c_2}{\displaystyle f_2},\ \  b V^3=b'-2b^2+\lambda, \ \ (V^3)'=-2(b'-b^2)-(\lambda+\mu),$$
where $c_1$, $c_2\in \mathbb R$.

In addition:

\hspace{0.5cm} (1) if $J\subseteq J_3$ is an open interval with $f_1'\neq 0$ (and $f_2'\neq 0$) everywhere on $J$, we have on $J$:
$$V^1=\frac{\displaystyle c_1}{\displaystyle f_1}, \ \ V^2=\frac{\displaystyle c_2}{\displaystyle f_2},\ \  V^3=\frac{\displaystyle b'-2b^2+\lambda}{\displaystyle b},$$
$$\lambda=-\left(\frac{\displaystyle b'+\lambda}{\displaystyle b}\right)'+2b^2-\mu,$$
where $c_1$, $c_2\in \mathbb R$;

\hspace{0.5cm} (2) if $f_1$ and $f_2$ are constant on an open interval $J\subseteq J_3$, we have on $J$:
$$V^1=c_1, \ \ V^2=c_2,\ \
V^3=F,$$
$$\lambda=0,$$
where $F'=-\mu$ and $c_1$, $c_2\in \mathbb R$.
\end{theorem}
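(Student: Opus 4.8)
The plan is to specialize the defining relation (\ref{11}) --- concretely the system (\ref{19}), since $\eta=dx^3$ forces $\eta^1=\eta^2=0$, $\eta^3=1$ --- to the present hypotheses, reducing it to ordinary differential equations in the single variable $x^3$, and then to read off $V^1,V^2,V^3,\lambda,\mu$. First I would record that $f_i=f_i(x^3)$ gives $a=c=0$, $b=f_1'/f_1$, $d=f_2'/f_2$ (functions of $x^3$ only), so, as in the proof of Proposition \ref{pl1}, $\R(E_1,E_1)=b'-b^2-bd$, $\R(E_2,E_2)=d'-d^2-bd$, $\R(E_3,E_3)=b'+d'-b^2-d^2$, and the off-diagonal Ricci components vanish. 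Since $E_1=f_1\,\partial/\partial x^1$ and $E_2=f_2\,\partial/\partial x^2$ kill every function of $x^3$ while $E_3=\partial/\partial x^3$ acts as $d/dx^3$, and $V^i=V^i(x^3)$, the system (\ref{19}) collapses: the $(1,2)$-equation holds identically; the $(1,3)$- and $(2,3)$-equations become $(V^1)'+bV^1=0$ and $(V^2)'+dV^2=0$, i.e. $(f_1V^1)'=0$ and $(f_2V^2)'=0$, whence $V^1=c_1/f_1$, $V^2=c_2/f_2$ with $c_1,c_2\in\mathbb R$; and the diagonal equations become
\begin{align*}
&-bV^3+b'-b^2-bd+\lambda=0,\\
&-dV^3+d'-d^2-bd+\lambda=0,\\
&(V^3)'+b'+d'-b^2-d^2+\lambda+\mu=0.
\end{align*}

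Next I would extract the asserted identities from these three relations. Subtracting the second from the first yields $(b-d)V^3=(b-d)'-(b^2-d^2)$; forming $d\times(\text{first})-b\times(\text{second})$ and cancelling the $b^2d$ and $bd^2$ terms yields $(b-d)\lambda=b'd-bd'$; differentiating the $V^3$-relation, eliminating $V^3$ by it once more, and substituting into the third equation yields, after simplification (the $(b+d)'$ and $b'+d'$ contributions cancel), $(b-d)^2\mu=-(b-d)[(b-d)''+b'd-bd']+[(b-d)']^2+(b-d)^2(b^2+d^2)$. This is the displayed system. For case (i), note $(f_1/f_2)'=(f_1/f_2)(b-d)$ with $f_1/f_2$ nowhere zero, so $(f_1/f_2)'\neq0$ on $J_3$ is equivalent to $b\neq d$ there; then dividing the three identities by $b-d$, $b-d$, $(b-d)^2$ respectively gives the stated formulas for $V^3,\lambda,\mu$. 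Putting $b=0$ (resp. $d=0$) --- legitimate on $J$ because $b\neq d$ on $J_3\supseteq J$ then forces $d\neq0$ (resp. $b\neq0$), i.e. $f_2'$ (resp. $f_1'$) nowhere zero on $J$ --- and simplifying, renaming $c_1/f_1$ (resp. $c_2/f_2$) when $f_1$ (resp. $f_2$) is a nonzero constant, produces sub-cases (1) and (2).

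The step needing care is case (ii), $b=d$ on $J_3$, i.e. $f_1/f_2$ constant there: now $b-d\equiv0$, so the three displayed identities degenerate to $0=0$ and the divisions of case (i) are unavailable. Here I would return to the reduced diagonal system above: with $b=d$ its first two equations coincide and give $bV^3=b'-2b^2+\lambda$, while the third gives $(V^3)'=-2(b'-b^2)-(\lambda+\mu)$; together with $V^1=c_1/f_1$, $V^2=c_2/f_2$ this is the asserted system. If $f_1'\neq0$ on $J\subseteq J_3$ (hence $b=d\neq0$, so $f_2'\neq0$ as well), I would solve $V^3=(b'-2b^2+\lambda)/b$, substitute into the third equation, and rewrite $(b'-2b^2+\lambda)/b=(b'+\lambda)/b-2b$ to obtain $\lambda=-((b'+\lambda)/b)'+2b^2-\mu$. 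If instead $f_1,f_2$ are constant on $J$, then $b=d=0$, so $bV^3=b'-2b^2+\lambda$ forces $\lambda=0$, while $(V^3)'=-\mu$ gives $V^3=F$ with $F'=-\mu$, and $V^1,V^2$ are constants. The principal difficulty is not any single computation but the bookkeeping: keeping track of which manipulations are legitimate, and treating the locus $b=d$ directly from the unreduced equations rather than from the degenerate identities.
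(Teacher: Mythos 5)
Your proposal is correct and follows essentially the same route as the paper: reduce (\ref{19}) under the given hypotheses to the ODE system in $x^3$, integrate the first-order equations for $V^1,V^2$, combine the two $V^3$-equations to get the $(b-d)$-identities, and then treat the loci $b\neq d$ and $b=d$ separately, returning to the undegenerate reduced system in the latter case exactly as the paper does. No gaps.
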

\begin{proof}
In this case, we have $a=0$, $b=\frac{\displaystyle f_1'}{\displaystyle f_1}$, $c=0$, $d=\frac{\displaystyle f_2'}{\displaystyle f_2}$, and
$$\R(E_1,E_1)=b'-b^2-bd,\ \ \R(E_2,E_2)=d'-d^2-bd, \ \ \R(E_3,E_3)=b'+d'-b^2-d^2,$$
$$\R(E_1,E_2)=\R(E_1,E_3)=\R(E_2,E_3)=0,$$
and (\ref{19}) becomes
\begin{equation*}
\left\{
    \begin{aligned}
 &      f_1\frac{\displaystyle \partial V^1}{\displaystyle \partial x^1}-bV^3+b'-b^2-bd+\lambda=0 \\
 &      f_2\frac{\displaystyle \partial V^2}{\displaystyle \partial x^2}-dV^3+d'-d^2-bd+\lambda=0\\
&      \frac{\displaystyle \partial V^3}{\displaystyle \partial x^3}+b'-b^2+d'-d^2+\lambda+\mu=0\\
  &     f_1\frac{\displaystyle \partial V^2}{\displaystyle \partial x^1}+f_2\frac{\displaystyle \partial V^1}{\displaystyle \partial x^2}=0 \\
  &     f_1\frac{\displaystyle \partial V^3}{\displaystyle \partial x^1}+\frac{\displaystyle \partial V^1}{\displaystyle \partial x^3}+bV^1=0 \\
  &     f_2\frac{\displaystyle \partial V^3}{\displaystyle \partial x^2}+\frac{\displaystyle \partial V^2}{\displaystyle \partial x^3}+dV^2=0
    \end{aligned}
  \right.,
\end{equation*}
which is equivalent to
\begin{equation}\label{3i}
\left\{
    \begin{aligned}
& (V^1)'=-bV^1\\
& (V^2)'=-dV^2\\
& (V^3)'=-(b'-b^2+d'-d^2)-(\lambda+\mu)\\
& b V^3=\displaystyle b'-b^2-bd+\lambda\\
& d V^3=\displaystyle d'-d^2-bd+\lambda
    \end{aligned}
  \right..
\end{equation}
From the first two equations, we get either $V^i=0$ or $\frac{\displaystyle (V^i)'}{\displaystyle V^i}=-\frac{\displaystyle f_i'}{\displaystyle f_i}$, i.e., $V^i=\frac{\displaystyle c_i}{\displaystyle f_i}$, where $c_i\in \mathbb R  \setminus \{0\}$ for $i\in \{1,2\}$.
From the last two equations, we obtain
$$(b-d)V^3=\displaystyle (b-d)'-(b^2-d^2)$$
and
$$(b-d)\lambda=b'd-bd'.$$
Differentiating the second-last equality, multiplying the result with $b-d$ and substituting in it the expressions of $(b-d)V^3$ and $(V^3)'$, we get
$$(b-d)^2\mu=-(b-d)[(b-d)''+b'd-bd']+[(b-d)']^2+(b-d)^2(b^2+d^2).$$

(i) For $x^3\in I_3$, we notice that $\left(\frac{\displaystyle f_1}{\displaystyle f_2}\right)'(x^3)\neq 0$ if and only if $b(x^3)\neq d(x^3)$.
From the last three equalities from above, on any open interval $J_3\subseteq I_3$ such that $\left(\frac{\displaystyle f_1}{\displaystyle f_2}\right)'(x^3)\neq 0$ for any $x^3\in J_3$ (equivalent to $b\neq d$ everywhere on $J_3$), we get the expressions of $V^3$, $\lambda$, and $\mu$.

(ii) We notice that $\frac{\displaystyle f_1}{\displaystyle f_2}$ is constant on $J_3$ if and only if $\left(\frac{\displaystyle f_1}{\displaystyle f_2}\right)'=0$ on $J_3$ (equivalent to $b=d$ on $J_3$).
In this case, (\ref{3i}) becomes
\begin{equation*}
\left\{
    \begin{aligned}
& (V^1)'=-bV^1\\
& (V^2)'=-bV^2\\
& (V^3)'=-2(b'-b^2)-(\lambda+\mu)\\
& b V^3={\displaystyle b'-2b^2+\lambda}
    \end{aligned}
  \right..
\end{equation*}

On any open interval $J\subseteq J_3$ where $f_1'\neq 0$ everywhere, we have $b\neq 0$ and
$$V^3=\frac{\displaystyle b'-2b^2+\lambda}{\displaystyle b}, $$
which, by differentiation and using the third relation, gives the expression of $\mu$.

If $f_1$ and $f_2$ are constant on $J\subseteq J_3$, then $b=d=0$ on $J$ and (\ref{3i}) becomes
\begin{equation*}
\left\{
    \begin{aligned}
& \lambda=0\\
& (V^1)'=0\\
& (V^2)'=0\\
& (V^3)'=-\mu
    \end{aligned}
  \right. .\qquad \qed
\end{equation*}
\renewcommand{\qed}{}
\end{proof}

\begin{example}
The Riemannian $Sol_3$ Lie group $(\mathbb R^3,\tilde g)$, where $f_1(x^3)=e^{-x^3}$ and $f_2(x^3)=e^{x^3}$, is an $\eta$-Ricci soliton for $\eta=dx^3$, with $$V=c_1\frac{\displaystyle \partial }{\displaystyle \partial x^1}+c_2\frac{\displaystyle \partial }{\displaystyle \partial x^2} \ \ (c_1,c_2\in \mathbb R), \ \ \lambda=0, \ \ \mu=2.$$
\end{example}

\begin{remark}
If $f_i=f_i(x^3)$ for $i\in \{1,2\}$, then $V$ with $V^1$, $V^2$ and $V^3$ given by:
\begin{equation*}
\left\{
    \begin{aligned}
&     V^1(x^3)=\frac{\displaystyle c_1}{\displaystyle f_1(x^3)}\\
& V^2(x^3)=\frac{\displaystyle c_2}{\displaystyle f_2(x^3)}\\
& V^3=0
    \end{aligned}
  \right.
\end{equation*}
is a Killing vector field on $(I,\tilde g)$, where $c_i\in \mathbb R$ for $i\in\{1,2\}$.
\end{remark}

\begin{theorem}\label{gss}
Let $(I, \tilde g, V,\lambda, \mu)$ be an almost $\eta$-Ricci soliton with $\eta=dx^3$. If
\linebreak
$f_1=f_1(x^1)$, $f_2=f_2(x^3)$, and $V^i=V^i(x^3)$ for $i\in \{1,2,3\}$, then:
\begin{equation*}
\left\{
    \begin{aligned}
& V^1=c_1 \\
& V^2=\frac{\displaystyle c_2}{\displaystyle f_2}\\
& d V^3=d'-d^2\\
& \lambda=0\\
& d^2 \mu=(d')^2+d^4-dd''
    \end{aligned}
  \right.,
\end{equation*}
where $c_1,c_2\in \mathbb{R}$.

We have the following cases.

(i) On any open interval $J_3\subseteq I_3$ on which $f_2$ is constant, we have:
$$V^1=c_1, \ \ V^2=c_2, \ \
V^3=F,$$
$$\lambda=0,$$
where $F'=-\mu$ and $c_1$, $c_2\in \mathbb R$.

(ii) On any open interval $J_3\subseteq I_3$ on which $f_2'\neq 0$ everywhere (equivalent to $d\neq 0$ on $J_3$), we have:
$$V^1=c_1, \ \ V^2=\frac{\displaystyle c_2}{\displaystyle f_2}, \ \
V^3=\frac{\displaystyle d'-d^2}{\displaystyle d},$$
$$\lambda=0, \ \ \mu=-\frac{\displaystyle d''}{\displaystyle d}+\left(\frac{\displaystyle d'}{\displaystyle d}\right)^2+d^2,$$
where $c_1, c_2\in \mathbb R$.
\end{theorem}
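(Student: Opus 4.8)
The plan is to substitute the hypotheses directly into the structure equations recorded in Section 2. Since $f_1 = f_1(x^1)$ and $f_2 = f_2(x^3)$, the defining ratios collapse to $a = b = c = 0$ and $d = f_2'/f_2$, so among the Ricci components only $\R(E_2,E_2) = \R(E_3,E_3) = d' - d^2$ are nonzero, while $\R(E_1,E_1) = \R(E_1,E_2) = \R(E_1,E_3) = \R(E_2,E_3) = 0$. Taking $\eta = dx^3$ (so $\eta^1 = \eta^2 = 0$, $\eta^3 = 1$) puts system \eqref{17} in the form \eqref{19}, and because $V^i = V^i(x^3)$ every $E_1$- and $E_2$-derivative of a $V^i$ vanishes, while $E_3(V^i) = (V^i)'$.

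Reading off the six equations of \eqref{19} under these simplifications: the fourth equation is vacuous; the fifth reduces to $(V^1)' = 0$, hence $V^1 = c_1$; the sixth becomes $(V^2)' + dV^2 = 0$, which by the same logarithmic integration used in Theorem~\ref{gs} gives $V^2 = c_2/f_2$ (with $c_2 = 0$ covering $V^2 \equiv 0$); the first equation collapses to $\lambda = 0$; the second gives $dV^3 = d' - d^2$; and the third gives $(V^3)' + d' - d^2 + \mu = 0$. The only step requiring care is eliminating $V^3$ between the last two relations in a way that does not divide by $d$: I would differentiate $dV^3 = d' - d^2$ to get $d'V^3 + d(V^3)' = d'' - 2dd'$, multiply through by $d$, and then substitute $dd'V^3 = d'(d' - d^2)$ (using $dV^3 = d' - d^2$ again) to obtain $d^2(V^3)' = dd'' - d^2 d' - (d')^2$; combining this with $(V^3)' = -(d' - d^2) - \mu$ yields the clean identity $d^2\mu = (d')^2 + d^4 - dd''$. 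This establishes the displayed system; I expect this elimination to be the main (though still routine) obstacle, everything else being bookkeeping.

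For the two cases: on an open interval $J_3 \subseteq I_3$ where $f_2$ is constant one has $d \equiv 0$, so $dV^3 = d' - d^2$ is vacuous, $V^2 = c_2/f_2$ is constant (rename the constant $c_2$), $\lambda = 0$, and the third equation becomes $(V^3)' = -\mu$, i.e. $V^3 = F$ with $F' = -\mu$; together with $V^1 = c_1$ this is case (i). On an open interval where $f_2' \neq 0$ everywhere—equivalently $d \neq 0$ there, since $d = f_2'/f_2$—we may divide by $d$ and $d^2$ to get $V^3 = (d' - d^2)/d = d'/d - d$ and $\mu = (d'/d)^2 + d^2 - d''/d$, with $\lambda = 0$, $V^1 = c_1$, $V^2 = c_2/f_2$, which is case (ii). This gives the full statement.
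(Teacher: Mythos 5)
Your proposal is correct and follows essentially the same route as the paper: reduce system \eqref{19} under $a=b=c=0$, $d=f_2'/f_2$ and $V^i=V^i(x^3)$ to the five relations $\lambda=0$, $(V^1)'=0$, $(V^2)'=-dV^2$, $dV^3=d'-d^2$, $(V^3)'=-(d'-d^2)-\mu$, then eliminate $V^3$ by differentiating $dV^3=d'-d^2$ and multiplying by $d$ to obtain $d^2\mu=(d')^2+d^4-dd''$ without dividing by $d$, exactly as in the paper. The case split ($d\equiv 0$ versus $d\neq 0$) and the resulting formulas also match the paper's treatment.
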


\begin{proof}
In this case, we have $a=b=c=0$, $d=\frac{\displaystyle f_2'}{\displaystyle f_2}$, and
$$\R(E_2,E_2)=\R(E_3,E_3)=d'-d^2,$$
$$\R(E_1,E_1)=\R(E_1,E_2)=\R(E_1,E_3)=\R(E_2,E_3)=0,$$
and (\ref{19}) becomes
\pagebreak
\begin{equation*}
\left\{
    \begin{aligned}
 &      f_1\frac{\displaystyle \partial V^1}{\displaystyle \partial x^1}+\lambda=0 \\
&       f_2\frac{\displaystyle \partial V^2}{\displaystyle \partial x^2}-dV^3+d'-d^2+\lambda=0\\
&      \frac{\displaystyle \partial V^3}{\displaystyle \partial x^3}+d'-d^2+\lambda+\mu=0\\
&       f_1\frac{\displaystyle \partial V^2}{\displaystyle \partial x^1}+f_2\frac{\displaystyle \partial V^1}{\displaystyle \partial x^2}=0 \\
&       f_1\frac{\displaystyle \partial V^3}{\displaystyle \partial x^1}+\frac{\displaystyle \partial V^1}{\displaystyle \partial x^3}=0 \\
&       f_2\frac{\displaystyle \partial V^3}{\displaystyle \partial x^2}+\frac{\displaystyle \partial V^2}{\displaystyle \partial x^3}+dV^2=0
    \end{aligned}
  \right.,
\end{equation*}
equivalent to
\begin{equation}\label{19as}
\left\{
    \begin{aligned}
& \lambda=0\\
& (V^1)'=0\\
& (V^2)'=-dV^2\\
& (V^3)'=-d'+d^2-\mu\\
& d V^3=d'-d^2
    \end{aligned}
  \right.,
\end{equation}
from which we get
$$V^1=c_1, \ \ V^2=\frac{\displaystyle c_2}{\displaystyle f_2},$$
where $c_1,c_2\in \mathbb{R}$. Differentiating the last equality of the previous system, multiplying the result with $d$ and substituting in it the expressions of $dV^3$ and $(V^3)'$, we get
$$d^2 \mu=(d')^2+d^4-dd''.$$

(i) We notice that $f_2$ is constant on $J_3$ if and only if $d=0$ on $J_3$, and (\ref{19as}) becomes
\begin{equation*}
\left\{
    \begin{aligned}
&  \lambda=0\\
& (V^1)'=0\\
& (V^2)'=0\\
& (V^3)'=-\mu
    \end{aligned}
  \right..
\end{equation*}

(ii) We notice that $f_2'\neq 0$ on $J_3$ if and only if $d\neq 0$ on $J_3$. Dividing the relations for $dV^3$ and $d^2\mu$ by $d$ and $d^2$, respectively, we obtain the expressions of $V^3$ and $\mu$.
\end{proof}

\begin{example}
For $f_1(x^1)=e^{x^1}$, $f_2(x^3)=e^{x^3}$, the vector field
$$V=c_1e^{x^1}\frac{\partial }{\partial x^1}+c_2\frac{\partial }{\partial x^2}-\frac{\partial }{\partial x^3} \ \ (c_1,c_2\in \mathbb R)$$ is the potential vector field of the $\eta$-Ricci soliton $(\mathbb R^3, \tilde g, 0,1)$ for $\eta=dx^3$.
\end{example}

\begin{theorem}\label{gsc}
Let $(I, \tilde g, V,\lambda, \mu)$ be an almost $\eta$-Ricci soliton with $\eta=dx^3$. If
\linebreak
$f_i=f_i(x^2)$ for $i\in\{1,2\}$ and $V^i=V^i(x^3)$ for $i\in \{1,2,3\}$, then:
\begin{equation*}
\left\{
\begin{aligned}
&V^1=c_1 \\
&V^2=c_2 \\
&V^3=F\\
&c_1f_1'=0=c_2f_1'\\
&\lambda=2f_2^2\left(\frac{f_1'}{f_1}\right)^2-f_2f_2'\frac{f_1'}{f_1}-f_2^2\frac{f_1''}{f_1}
\end{aligned}
  \right.,
\end{equation*}
where $c_1,c_2\in \mathbb R$, $F'=G$, $\mu(x^2,x^3)=-\lambda(x^2)-G(x^3)$,
with $G=G(x^3)$ a smooth function on $I_3$.

We have the following cases.

(i) For $J_2\subseteq I_2$ a nontrivial interval on which $f_1$ is constant, we have
$$\lambda(x^2)=0,\ \ F'(x^3)=-\mu(x^2,x^3)\ \ \textrm{for }x^2\in J_2, x^3\in I_3.$$

(ii) For $f_1$ constant, we have
$$\lambda=0,\ \ \mu =\mu(x^3),\ \ F'(x^3)=-\mu(x^3)\ \ \textrm{for } x^3\in I_3.$$

(iii) For $f_1$ not constant, we have $$V^1=V^2=0.$$
\end{theorem}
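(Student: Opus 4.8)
The plan is to substitute the two assumptions $f_i=f_i(x^2)$ and $V^i=V^i(x^3)$ into the general soliton system (\ref{19}) for $\eta=dx^3$ and read off the consequences; apart from one separation-of-variables observation for $\mu$, everything is routine bookkeeping. First I would record the structure functions: since $f_1,f_2$ do not depend on $x^1$ or $x^3$, the same computation as in Propositions \ref{ps11ax} and \ref{pl2} gives $b=c=d=0$ and $a=f_2\frac{f_1'}{f_1}$, so that the only nonzero Ricci components are
$$\R(E_1,E_1)=\R(E_2,E_2)=E_2(a)-a^2=f_2f_2'\frac{f_1'}{f_1}+\frac{f_2^2}{f_1^2}[f_1''f_1-2(f_1')^2],$$
with $\R(E_3,E_3)=\R(E_1,E_2)=\R(E_1,E_3)=\R(E_2,E_3)=0$, the last one because $a$ depends only on $x^2$, so $E_3(a)=0$.

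Next I would plug into (\ref{19}), using that $V^i=V^i(x^3)$ forces $E_1(V^i)=E_2(V^i)=0$ and $E_3(V^i)=(V^i)'$. The fifth and sixth equations reduce to $(V^1)'=(V^2)'=0$, hence $V^1=c_1$ and $V^2=c_2$ with $c_1,c_2\in\mathbb R$. The fourth equation becomes $aV^1=0$, i.e.\ $c_1f_1'=0$ since $\frac{f_2}{f_1}$ is nowhere zero. The second equation gives $\lambda=-\R(E_2,E_2)$, which is exactly the displayed formula for $\lambda$ and in particular shows that $\lambda=\lambda(x^2)$; feeding $\R(E_1,E_1)=\R(E_2,E_2)=-\lambda$ back into the first equation leaves $aV^2=0$, i.e.\ $c_2f_1'=0$. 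Finally the third equation becomes $(V^3)'+\lambda+\mu=0$.

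The one nonmechanical point is this last relation. Here $(V^3)'$ is a function of $x^3$ alone while $\lambda$ is a function of $x^2$ alone, so $\mu=-\lambda-(V^3)'$ automatically splits; setting $F:=V^3$ and $G:=F'=(V^3)'$ (a smooth function on $I_3$) yields $\mu(x^2,x^3)=-\lambda(x^2)-G(x^3)$, which completes the system displayed in the statement. The three cases are then immediate. In (i), on a nontrivial interval $J_2\subseteq I_2$ where $f_1$ is constant we have $f_1'=f_1''=0$, so $\lambda\equiv 0$ on $J_2$ by the formula, whence $\mu(x^2,x^3)=-G(x^3)=-F'(x^3)$ for $x^2\in J_2$, $x^3\in I_3$. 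Case (ii) is the special case $J_2=I_2$, which additionally forces $\mu=-G(x^3)$ to be a function of $x^3$ only. In (iii), if $f_1$ is not constant then $f_1'(p)\neq 0$ for some $p\in I_2$, and $c_1f_1'=c_2f_1'=0$ then forces $c_1=c_2=0$, that is, $V^1=V^2=0$. I do not anticipate any genuine obstacle: once the two assumptions are imposed, (\ref{19}) essentially solves itself, and the only care needed is in tracking the dependence of $a$, $\lambda$, and $V^3$ on the coordinates.
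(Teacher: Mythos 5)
Your proposal is correct and follows essentially the same route as the paper: reduce the system (\ref{19}) using $a=f_2\frac{f_1'}{f_1}$, $b=c=d=0$, obtain $V^1=c_1$, $V^2=c_2$, $aV^1=aV^2=0$, $\lambda=a^2-f_2a'$, and $(V^3)'=-(\lambda+\mu)$, then split $\mu$ by separation of variables. Your derivation of $\mu(x^2,x^3)=-\lambda(x^2)-G(x^3)$ directly from $G:=(V^3)'$ is marginally more direct than the paper's (which differentiates $\lambda+\mu$ in $x^2$ and integrates back), but the argument and all three cases are otherwise identical.
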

\begin{proof}
In this case, we have $a=f_2\frac{\displaystyle f_1'}{\displaystyle f_1}$ , $b=c=d=0$, and
$$\R(E_1,E_1)=\R(E_2,E_2)=E_2(a)-a^2,$$
$$\R(E_1,E_2)=\R(E_1,E_3)=\R(E_2,E_3)=\R(E_3,E_3)=0,$$
and (\ref{19}) becomes
\pagebreak
\begin{equation*}\label{19ac}
\left\{
    \begin{aligned}
 &      f_1\frac{\displaystyle \partial V^1}{\displaystyle \partial x^1}-aV^2+f_2a'-a^2+\lambda=0 \\
&       f_2\frac{\displaystyle \partial V^2}{\displaystyle \partial x^2}+f_2a'-a^2+\lambda=0\\
&      \frac{\displaystyle \partial V^3}{\displaystyle \partial x^3}+\lambda+\mu=0\\
&       f_1\frac{\displaystyle \partial V^2}{\displaystyle \partial x^1}+f_2\frac{\displaystyle \partial V^1}{\displaystyle \partial x^2}+aV^1=0 \\
&       f_1\frac{\displaystyle \partial V^3}{\displaystyle \partial x^1}+\frac{\displaystyle \partial V^1}{\displaystyle \partial x^3}=0 \\
&       f_2\frac{\displaystyle \partial V^3}{\displaystyle \partial x^2}+\frac{\displaystyle \partial V^2}{\displaystyle \partial x^3}=0
    \end{aligned}
  \right.,
\end{equation*}
that is,
\begin{equation*}
\left\{
\begin{aligned}
&aV^1=0 \\
&aV^2=0 \\
&(V^1)'=0 \\
&(V^2)'=0\\
&(V^3)'=-(\lambda+\mu)\\
&f_2a'-a^2+\lambda=0
\end{aligned}
  \right..
\end{equation*}
From the second-last equation, we deduce that $\lambda+\mu$ depends only on $x^3$; therefore,
$$0=\frac{\partial (\lambda+\mu)}{\partial x^2}=\lambda'+\frac{\partial \mu}{\partial x^2}\ ,$$
which, by integration, gives
$$\mu(x^2,x^3)=-\lambda(x^2)-G(x^3)$$
with $G=G(x^3)$ a smooth function on $I_3$. We obtain
\begin{equation}\label{20}
\left\{
\begin{aligned}
&V^1=c_1 \\
&V^2=c_2 \\
&V^3=F\\
&aV^1=0 \\
&aV^2=0 \\
&\lambda=-f_2a'+a^2\\
\end{aligned}
  \right.,
\end{equation}
where $F'=G$ and $c_1, c_2\in \mathbb{R}$.

(i) We notice that $f_1$ is constant on $J_2$ if and only if $a=0$ on $J_2$; therefore, $a'=0$ and $\lambda =0$, hence $F'=-\mu$ on $J_2$.

(ii) It follows from (i) for $J_2=I_2$.

(iii) For $f_1$ not constant, there exists $x_0^2\in I_2$ such that $f_1'(x_0^2)\neq 0$, i.e., $a(x_0^2)\neq 0$, and from the second- and third-last equations of (\ref{20}), it follows that $V_1=V_2=0$.
\end{proof}

\begin{example}
The manifold $\mathbb H^2\times\mathbb R:=\{(x^1,x^2,x^3)\in\mathbb R^3 \ : \ x^2>0 \}$ with $$\tilde g=\nolinebreak \frac{\displaystyle 1}{\displaystyle f^2}dx^1\otimes dx^1+\frac{\displaystyle 1}{\displaystyle f^2}dx^2\otimes dx^2+dx^3\otimes dx^3,$$ where $f(x^2)=x^2$, is an $\eta$-Ricci soliton for $\eta=dx^3$, with
$$V=\frac{\displaystyle \partial}{\displaystyle \partial x^3}, \ \ \lambda=1, \ \ \mu=-1.$$
\end{example}

\begin{theorem}
Let $(I, \tilde g, V,\lambda, \mu)$ be an almost $\eta$-Ricci soliton with $\eta=dx^3$. If
\linebreak
$f_1=f_1(x^2)$, $f_2=f_2(x^1)$, and $V^i=V^i(x^3)$ for $i\in \{1,2,3\}$, then:
\begin{equation*}
\left\{
    \begin{aligned}
&  V^1=c_1 \\
&  V^2=c_2 \\
&  V^3=F \\
&c_1 f_2\frac{\displaystyle f_1'}{\displaystyle f_1}=-c_2 f_1\frac{\displaystyle f_2'}{\displaystyle f_2}\\
&c_2 f_2\frac{\displaystyle f_1'}{\displaystyle f_1}=c_1 f_1\frac{\displaystyle f_2'}{\displaystyle f_2}\\
&  \lambda=c_1f_1\frac{\displaystyle f_2'}{\displaystyle f_2}+f_1^2\left[\left(\frac{\displaystyle f_2'}{\displaystyle f_2}\right)^2-\left(\frac{\displaystyle f_2'}{\displaystyle f_2}\right)'\right]+f_2^2\left[\left(\frac{\displaystyle f_1'}{\displaystyle f_1}\right)^2-\left(\frac{\displaystyle  f_1'}{\displaystyle f_1}\right)'\right]
    \end{aligned}
  \right.,
\end{equation*}
where $c_1,c_2\in \mathbb R$ and $F'=-(\lambda+\mu)$.

For $c_1\neq 0$ or $c_2\neq 0$, we get $f_1$ and $f_2$ constant, $\lambda=0$, and $F'=-\mu$.
\end{theorem}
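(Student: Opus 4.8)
The plan is to specialize the curvature data to the present case and then read off the structure of the soliton equations. First I would record that here $a=f_2\,f_1'/f_1$, $b=0$, $c=f_1\,f_2'/f_2$, $d=0$, and---crucially---that $a$ and $c$ depend only on $x^1$ and $x^2$, so that $E_3(a)=E_3(c)=0$. Feeding this into the Ricci formulas listed above collapses them to $\R(E_1,E_1)=\R(E_2,E_2)=E_1(c)+E_2(a)-a^2-c^2$ and $\R(E_3,E_3)=\R(E_1,E_2)=\R(E_1,E_3)=\R(E_2,E_3)=0$.

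Next I would substitute this into the system (\ref{19}), using that $V^i=V^i(x^3)$ forces $E_1(V^i)=E_2(V^i)=0$ and $E_3(V^i)=(V^i)'$ for each $i$. The fifth and sixth equations then reduce to $(V^1)'=0$ and $(V^2)'=0$, so $V^1=c_1$ and $V^2=c_2$ with $c_1,c_2\in\mathbb R$; the fourth equation becomes $c_1a+c_2c=0$, which is the fourth displayed relation. Subtracting the second equation from the first eliminates the common Ricci term and gives $c_2a-c_1c=0$, the fifth displayed relation. Expanding $E_1(c)=f_1^2(f_2'/f_2)'$ and $E_2(a)=f_2^2(f_1'/f_1)'$ in the first equation, and using $c_2a=c_1c$ to rewrite the term $aV^2=c_2a$ in the symmetric form $c_1f_1f_2'/f_2$, yields precisely the claimed expression for $\lambda$. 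Finally, the third equation reads $(V^3)'+\lambda+\mu=0$, so $V^3=F$ with $F'=-(\lambda+\mu)$.

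For the last assertion, assume $c_1\neq0$ or $c_2\neq0$. Regarding $c_1a+c_2c=0$ and $c_2a-c_1c=0$ as a homogeneous linear system, I would form the combination $c_1\cdot(\text{first})+c_2\cdot(\text{second})$ to obtain $(c_1^2+c_2^2)a=0$, hence $a\equiv0$, and $c_2\cdot(\text{first})-c_1\cdot(\text{second})$ to obtain $(c_1^2+c_2^2)c=0$, hence $c\equiv0$. Thus $f_1'=f_2'=0$, so $f_1$ and $f_2$ are constant; every $f_i'/f_i$ term in $\lambda$ then vanishes, giving $\lambda=0$, and the third equation reduces to $F'=-\mu$. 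The argument is computationally routine; the only delicate point is the bookkeeping in the formula for $\lambda$---expanding the Ricci term correctly and exploiting $c_2a=c_1c$ to display the leading term symmetrically---together with the observation that, since the derived $\lambda$ depends only on $(x^1,x^2)$ while $(V^3)'$ depends only on $x^3$, the identity $(V^3)'=-(\lambda+\mu)$ constrains only the sum $\lambda+\mu$, which is why the statement records $F'=-(\lambda+\mu)$ rather than solving for $\mu$.
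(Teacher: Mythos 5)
Your proposal is correct and follows essentially the same route as the paper: specialize $a,b,c,d$ and the Ricci components, reduce the system (\ref{19}) using $V^i=V^i(x^3)$ to get $V^1=c_1$, $V^2=c_2$, $V^3=F$ together with the two algebraic relations $c_1a+c_2c=0$ and $c_2a=c_1c$, read off $\lambda$ from the first equation, and conclude $(c_1^2+c_2^2)a=0$ in the nondegenerate case. The only cosmetic difference is that you make the elimination yielding $a\equiv 0$ and $c\equiv 0$ slightly more explicit than the paper does.
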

\begin{proof}
In this case, we have $a=f_2\frac{\displaystyle f_1'}{\displaystyle f_1}$, $b=0$, $c=f_1\frac{\displaystyle f_2'}{\displaystyle f_2}$, $d=0$, and
$$\R(E_1,E_1)=\R(E_2,E_2)=E_1(c)+E_2(a)-a^2-c^2,$$
$$\R(E_1,E_2)=\R(E_1,E_3)=\R(E_2,E_3)=\R(E_3,E_3)=0,$$
and (\ref{19}) becomes
\pagebreak
\begin{equation}\label{19h}
\left\{
    \begin{aligned}
 &      E_1(V^1)-aV^2+E_1(c)+E_2(a)-a^2-c^2+\lambda=0 \\
 &      E_2(V^2)-cV^1+E_1(c)+E_2(a)-a^2-c^2+\lambda=0 \\
 &      E_3(V^3)+\lambda+\mu=0 \\
 &     E_1(V^2)+E_2(V^1)+aV^1+cV^2=0 \\
 &      E_1(V^3)+E_3(V^1)=0 \\
 &      E_2(V^3)+E_3(V^2)=0
    \end{aligned}
  \right..
\end{equation}
Since $V^i$ for $i\in \{1,2,3\}$ depends only on $x^3$, (\ref{19h}) becomes
\begin{equation*}
\left\{
    \begin{aligned}
& aV^2=f_1\frac{\displaystyle \partial c}{\displaystyle \partial x^1}+f_2\frac{\displaystyle \partial a}{\displaystyle \partial x^2}-a^2-c^2+\lambda=cV^1 \\
& aV^1=-cV^2 \\
 &      (V^1)'=0 \\
 &     (V^2)'=0 \\
& (V^3)'=-(\lambda+\mu)
    \end{aligned}
  \right..
\end{equation*}
From the last three equations, we infer
$V^1=c_1$, $V^2=c_2$, where $c_1,c_2\in \mathbb R$, and $V^3=F$, where $F'=-(\lambda+\mu)$, and the previous system is equivalent to
\begin{equation*}
\left\{
    \begin{aligned}
&  V^1=c_1 \\
&  V^2=c_2 \\
&  \lambda=c_1f_1\frac{\displaystyle f_2'}{\displaystyle f_2}+f_1^2\left[\left(\frac{\displaystyle f_2'}{\displaystyle f_2}\right)^2- \left(\frac{\displaystyle f_2'}{\displaystyle f_2}\right)'\right]+f_2^2\left[\left(\frac{\displaystyle f_1'}{\displaystyle f_1}\right)^2-\left(\frac{\displaystyle  f_1'}{\displaystyle f_1}\right)'\right] \\
&  c_2a=c_1c \\
&  c_1a=-c_2c \\
&  V^3=F
    \end{aligned}
  \right..
\end{equation*}
We get $(c_1^2+c_2^2)a=0$. If $c_1\neq 0$ or $c_2\neq 0$, then $a=0$, i.e., $f_1$ is constant, and further, $c=0$, i.e., $f_2$ is constant. In this case, $$\lambda=0, \ \ V^3=F,$$ where $F'=-\mu$. So, we get the conclusion.
\end{proof}

\begin{example}
For $f_1(x^2)=e^{x^2}$ and $f_2(x^1)=e^{x^1}$, the vector field
$$V=\frac{1}{2}e^{2x^3}\frac{\partial }{\partial x^3}$$ is the potential vector field of the almost $\eta$-Ricci soliton $$\left(\mathbb R^3, \tilde g, \lambda(x^1,x^2)=e^{2x^1}+e^{2x^2},\mu(x^1,x^2,x^3)=-(e^{2x^1}+e^{2x^2}+e^{2x^3})\right)$$ for $\eta=dx^3$.
\end{example}

\begin{theorem}\label{gsm}
Let $(I, \tilde g, V,\lambda, \mu)$ be an almost $\eta$-Ricci soliton with $\eta=dx^3$. If
\linebreak
$f_1=f_1(x^2)$, $f_2=f_2(x^3)$, and $V^i=V^i(x^3)$ for $i\in \{1,2,3\}$, then:
\begin{equation*}
\left\{
   \begin{aligned}
& V^1=c_1\\
& V^2=c_2f_2+\frac{c_3}{f_2}\\
& aV^1=0\\
& dV^3=aV^2+d'-d^2\\
& a=-c_2f_2+F\\
& \lambda=aV^2-f_2F'+a^2\\
& (V^3)'=-(d'-d^2)-(\lambda+\mu)\\
   \end{aligned}
  \right.,
\end{equation*}
where $c_1,c_2, c_3\in \mathbb R$, and $F$ is a smooth real function on $I_2$.
Moreover,

\quad (i) if $f_2$ is constant, then $V^2$ is constant;

\quad (ii) if $f_2$ is not constant, then $f_1(x^2)=c_4e^{-c_2x^2}$, where $c_4\in \mathbb R\setminus \{0\}$, and $a=-c_2f_2$;

\quad (iii) if $V^1\neq 0$, then $a=0$, $dV^3=d'-d^2$, $\lambda=-f_2F'$, and $f_1$ is constant.
If, in addition, $f_2$ is not constant, then $V^2=\displaystyle\frac{c_3}{f_2}$, $F=0$, $\lambda=0$, and $\mu=-(V^3)'-(d'-d^2)$.
\end{theorem}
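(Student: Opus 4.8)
The plan is to substitute the geometric data of this metric into system (\ref{19}), use $V^i=V^i(x^3)$ to turn it into ordinary differential equations in $x^3$, integrate, and then read off the sub-cases; the only non-routine point is the off-diagonal $(2,3)$-equation, which forces a separation-of-variables dichotomy on $f_2$.

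First I would record (exactly as in Propositions~\ref{ps11a} and~\ref{pl3a}) that here $a=f_2\frac{f_1'}{f_1}$, $b=c=0$, $d=\frac{f_2'}{f_2}$, and that the only nonzero components of the Ricci tensor are $\R(E_1,E_1)=E_2(a)-a^2$, $\R(E_2,E_2)=E_2(a)-a^2+d'-d^2$, $\R(E_3,E_3)=d'-d^2$ and $\R(E_2,E_3)=E_3(a)$. Since each $V^i$ depends only on $x^3$, we have $E_1(V^i)=E_2(V^i)=0$ and $E_3(V^i)=(V^i)'$; substituting this together with $\eta^1=\eta^2=0$, $\eta^3=1$ into (\ref{19}), the $(1,3)$- and $(1,2)$-components collapse to $(V^1)'=0$ and $aV^1=0$, giving $V^1=c_1\in\mathbb R$ and leaving the constraint $aV^1=0$. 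The $(1,1)$-component gives $\lambda=aV^2+a^2-E_2(a)$; subtracting the $(1,1)$- and $(2,2)$-components yields $dV^3=aV^2+d'-d^2$; and the $(3,3)$-component is precisely $(V^3)'=-(d'-d^2)-(\lambda+\mu)$.

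The heart of the proof is the $(2,3)$-component, which reduces to $(V^2)'+dV^2+2E_3(a)=0$. The key observation is that, since $\frac{f_1'}{f_1}$ depends only on $x^2$, one has $E_3(a)=\frac{\partial a}{\partial x^3}=f_2'\frac{f_1'}{f_1}=da$, while $f_2d=f_2'$; multiplying the equation by $f_2$ therefore makes it exact, $(f_2V^2)'=-(f_2^2)'\frac{f_1'}{f_1}$. Its left-hand side depends only on $x^3$ and $\frac{f_1'}{f_1}$ only on $x^2$, so either $f_2$ is constant, in which case $d=0$, $E_3(a)=0$, so $(V^2)'=0$ and $V^2$ is constant, or $\frac{f_1'}{f_1}$ equals a constant $-c_2$, in which case integrating in $x^2$ gives $f_1(x^2)=c_4e^{-c_2x^2}$ with $c_4\in\mathbb R\setminus\{0\}$, hence $a=-c_2f_2$, and integrating $(f_2V^2)'=c_2(f_2^2)'$ in $x^3$ gives $f_2V^2-c_2f_2^2=c_3$, that is, $V^2=c_2f_2+\frac{c_3}{f_2}$. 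Setting $F:=a+c_2f_2$ (and choosing $c_2=0$ in the constant-$f_2$ branch), $F$ is a smooth function of $x^2$ alone, $E_2(a)=f_2F'$, and the formula for $\lambda$ becomes $\lambda=aV^2+a^2-f_2F'$; collecting all the relations gives the displayed system, while assertions (i) and (ii) are exactly the two branches of this dichotomy.

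For assertion (iii): if $V^1=c_1\neq0$, then $aV^1=0$ forces $a=0$, hence $\frac{f_1'}{f_1}=0$ (as $f_2$ is nowhere zero), so $f_1$ is constant; then $dV^3=aV^2+d'-d^2$ becomes $dV^3=d'-d^2$ and $\lambda=aV^2+a^2-f_2F'=-f_2F'$. If moreover $f_2$ is not constant, then $a=0$ and $a=-c_2f_2+F$ give $-c_2f_2+F\equiv0$, which with $f_2$ nonconstant forces $c_2=0$ and $F\equiv0$; hence $V^2=\frac{c_3}{f_2}$, $\lambda=0$, and the $(3,3)$-relation gives $\mu=-(V^3)'-(d'-d^2)$. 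I expect the main obstacle to be the $(2,3)$-equation: one must spot the identity $E_3(a)=da$, recast the equation as a total $x^3$-derivative, and argue the separation-of-variables dichotomy carefully; everything else is bookkeeping.
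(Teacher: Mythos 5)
Your proposal is correct and follows essentially the same route as the paper: reduce system (\ref{19}) to ODEs in $x^3$, extract the separation-of-variables dichotomy from the $(2,3)$-equation (the paper phrases it as ``$\partial a/\partial x^3$ must depend only on $x^3$'' and then solves the linear ODE for $V^2$, while you multiply by the integrating factor $f_2$ to get $(f_2V^2)'=-(f_2^2)'\frac{f_1'}{f_1}$ directly, which is the same computation), and then read off cases (i)--(iii). Your explicit argument for (iii), which the paper leaves implicit, is also correct.
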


\begin{proof}
In this case, we have $a=f_2\frac{\displaystyle f_1'}{\displaystyle f_1}$, $b=c=0$, $d=\frac{\displaystyle f_2'}{\displaystyle f_2}$, and
$$\R(E_1,E_1)=E_2(a)-a^2,\ \
\R(E_2,E_2)=E_2(a)-a^2+E_3(d)-d^2, \ \ \R(E_3,E_3)=E_3(d)-d^2,$$
$$\R(E_1,E_2)=\R(E_1,E_3)=0, \ \ \R(E_2,E_3)=E_3(a),$$
and (\ref{19}) becomes
\pagebreak
\begin{equation*}
\left\{
    \begin{aligned}
 &      f_1\frac{\displaystyle \partial V^1}{\displaystyle \partial x^1}-aV^2+f_2\frac{\displaystyle \partial a}{\displaystyle \partial x^2}-a^2+\lambda=0 \\
 &      f_2\frac{\displaystyle \partial V^2}{\displaystyle \partial x^2}-dV^3+d'-d^2+f_2\frac{\displaystyle \partial a}{\displaystyle \partial x^2}-a^2+\lambda=0\\
&      \frac{\displaystyle \partial V^3}{\displaystyle \partial x^3}+d'-d^2+\lambda+\mu=0\\
  &     f_1\frac{\displaystyle \partial V^2}{\displaystyle \partial x^1}+f_2\frac{\displaystyle \partial V^1}{\displaystyle \partial x^2}+aV^1=0 \\
  &     f_1\frac{\displaystyle \partial V^3}{\displaystyle \partial x^1}+\frac{\displaystyle \partial V^1}{\displaystyle \partial x^3}=0 \\
  &     \frac{\displaystyle 1}{\displaystyle 2}\left[f_2\frac{\displaystyle \partial V^3}{\displaystyle \partial x^2}+\frac{\displaystyle \partial V^2}{\displaystyle \partial x^3}+dV^2\right]+\frac{\displaystyle \partial a}{\displaystyle \partial x^3}=0
    \end{aligned}
  \right.,
\end{equation*}
which is equivalent to
\begin{equation}\label{22}
\left\{
    \begin{aligned}
 &      (V^1)'=0 \\
 &      aV^1=0\\
&      (V^2)'+dV^2=-2\frac{\displaystyle \partial a}{\displaystyle \partial x^3}\\
  &     aV^2=f_2\frac{\displaystyle \partial a}{\displaystyle \partial x^2}-a^2+\lambda \\
  &     (V^3)'=-(d'-d^2)-(\lambda+\mu) \\
  &     dV^3=f_2\frac{\displaystyle \partial a}{\displaystyle \partial x^2}-a^2+d'-d^2+\lambda
    \end{aligned}
  \right..
\end{equation}
From the first equation, it follows that $V^1$ is constant.
From the third equation, we deduce that $\displaystyle\frac{\partial a}{\partial x^3}$ depends only on $x^3$, while $a$ depends on $x^2$ and $x^3$.
On the other hand, we have $\displaystyle\frac{\partial a}{\partial x^3}=f_2'\displaystyle\frac{f_1'}{f_1}$.
Since $\displaystyle\frac{f_1'}{f_1}$ depends only on $x^2$, if there exists $x^3_0\in I_3$ such that $f_2'(x^3_0)\neq 0$, then $\displaystyle\frac{f_1'}{f_1}$ is constant on $I_2$, let's say $\displaystyle\frac{f_1'}{f_1}=c_1$ with $c_1\in \mathbb R$. So, in the case $f_2$ not constant, we infer that $f_1(x^2)=c_2e^{c_1x^2}$, where $c_2\in \mathbb R\setminus \{0\}$, and $a=c_1f_2$. Hence,
$\displaystyle\frac{\partial a}{\partial x^3}=c_1f_2'$, and we obtain the equation
$$(V^2)'+\displaystyle\frac{f_2'}{f_2}V^2=-2c_1f_2',$$
with the solution
$V^2=-c_1f_2+\displaystyle\frac{c_3}{f_2}$, where $c_3\in \mathbb R$.
If $f_2$ is constant on $I_3$, we obtain
$\displaystyle\frac{\partial a}{\partial x^3}=0$ and $d=0$, and the third equation of the system becomes $(V^2)'=0$, so $V^2=c_4$, where $c_4\in \mathbb R$.
We notice that we always have
$$\displaystyle\frac{\partial a}{\partial x^3}=c_1f_2'\ \ \textrm{and}\ \ V^2=-c_1f_2+\displaystyle\frac{c_3}{f_2},$$
where $c_1, c_3\in \mathbb R$. We infer that $a(x^2,x^3)=c_1f_2+F(x^2)$, where $F$ is a smooth real function on $I_2$. Hence, $\displaystyle\frac{\partial a}{\partial x^2}=F'$, and, from the fourth equation of the system, we get
$$\lambda=aV^2-f_2F'+a^2.$$
The fifth equation of (\ref{22}) gives
$$\mu=-(V^3)'-(d'-d^2)-\lambda,$$
while, from the last equation of the system and the expression of $\lambda$, we get
$$dV^3=aV^2+d'-d^2,$$
that is,
$$dV^3=-c_1f_2^2\displaystyle\frac{f_1'}{f_1}+c_3\displaystyle\frac{f_1'}{f_1}+d'-d^2.\qquad \qed$$
\renewcommand{\qed}{}
\end{proof}

\begin{example}
For $f_1(x^2)=e^{x^2}$ and $f_2(x^3)=e^{x^3}$, the vector field
$$V=(1-e^{2x^3})\frac{\partial }{\partial x^2}-e^{2x^3}\frac{\partial }{\partial x^3}$$
is a potential vector field of the almost $\eta$-Ricci soliton $$\left(\mathbb R^3, \tilde g, \lambda=1,\mu(x^3)=2e^{2x^3}\right)$$ for $\eta=dx^3$.
\end{example}

\subsection{Almost $\eta$-Ricci solitons with $\eta=dx^3$ and $V=\frac{\displaystyle \partial}{\displaystyle \partial x^3}$}

If $V^1=V^2=\eta^1=\eta^2=0$ and $V^3=\eta^3=1$, then the system (\ref{17}) becomes
\begin{equation}\label{19p}
\left\{
    \begin{aligned}
  &     -b+\R(E_1,E_1)+\lambda=0 \\
  &     -d+\R(E_2,E_2)+\lambda=0 \\
   &    \R(E_3,E_3)+\lambda+\mu=0 \\
   &    \R(E_1,E_3)=0 \\
   &    \R(E_2,E_3)=0
    \end{aligned}
  \right..
\end{equation}

\begin{theorem}\label{mwa}
Let $V=\frac{\displaystyle \partial }{\displaystyle \partial x^3}$ and $\eta=dx^3$. If $f_i=f_i(x^3)$ for $i\in \{1,2\}$, then $(I,\tilde g,V, \lambda,\mu)$ is an almost $\eta$-Ricci soliton if and only if
$$\lambda=\frac{\displaystyle f_1'}{\displaystyle f_1}\left(\frac{\displaystyle f_2'}{\displaystyle f_2}+1\right)-\frac{\displaystyle f_1''}{\displaystyle f_1}+2\left(\frac{\displaystyle f_1'}{\displaystyle f_1}\right)^2\ \ \textrm{and} \ \
\mu=-\frac{\displaystyle f_1'}{\displaystyle f_1}\left(\frac{\displaystyle f_2'}{\displaystyle f_2}+1\right)-\frac{\displaystyle f_2''}{\displaystyle f_2}+2\left(\frac{\displaystyle f_2'}{\displaystyle f_2}\right)^2,$$
and $f_1$ and $f_2$ satisfy
\begin{align}\label{ka}
\frac{\displaystyle f_1''-f_1'}{\displaystyle f_1}-2\left(\frac{\displaystyle f_1'}{\displaystyle f_1}\right)^2&=\frac{\displaystyle f_2''-f_2'}{\displaystyle f_2}-2\left(\frac{\displaystyle f_2'}{\displaystyle f_2}\right)^2.
\end{align}
\end{theorem}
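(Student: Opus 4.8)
The plan is to substitute the present data into the defining system of an almost $\eta$-Ricci soliton and read off $\lambda$ and $\mu$ directly from two of the equations, the remaining one becoming the compatibility relation (\ref{ka}). Concretely, since $f_i=f_i(x^3)$, exactly as computed in the proof of Proposition \ref{pl1} one has $a=c=0$, $b=\frac{f_1'}{f_1}$, $d=\frac{f_2'}{f_2}$, and
$$\R(E_1,E_1)=b'-b^2-bd,\qquad \R(E_2,E_2)=d'-d^2-bd,\qquad \R(E_3,E_3)=b'+d'-b^2-d^2,$$
together with $\R(E_1,E_2)=\R(E_1,E_3)=\R(E_2,E_3)=0$. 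Because $V=\frac{\partial}{\partial x^3}$ means $V^1=V^2=0$, $V^3=1$, and $\eta=dx^3$ means $\eta^1=\eta^2=0$, $\eta^3=1$, equation (\ref{11}) is equivalent to system (\ref{19p}); its last two equations, $\R(E_1,E_3)=0$ and $\R(E_2,E_3)=0$, hold identically, so $(I,\tilde g,V,\lambda,\mu)$ is an almost $\eta$-Ricci soliton if and only if
\begin{equation*}
\left\{
\begin{aligned}
& -b+b'-b^2-bd+\lambda=0\\
& -d+d'-d^2-bd+\lambda=0\\
& b'+d'-b^2-d^2+\lambda+\mu=0
\end{aligned}
\right..
\end{equation*}

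Next I would solve this system, which is affine in $(\lambda,\mu)$. The first equation gives $\lambda=b-b'+b^2+bd$; inserting the identity $b'-b^2=\frac{f_1''}{f_1}-2\left(\frac{f_1'}{f_1}\right)^2$ together with $bd=\frac{f_1'}{f_1}\cdot\frac{f_2'}{f_2}$ rewrites this as the claimed expression $\lambda=\frac{f_1'}{f_1}\left(\frac{f_2'}{f_2}+1\right)-\frac{f_1''}{f_1}+2\left(\frac{f_1'}{f_1}\right)^2$. Substituting this $\lambda$ into the third equation yields $\mu=-(b'-b^2)-(d'-d^2)-\lambda=-(d'-d^2)-b(1+d)$, and the analogous identity for $f_2$ turns this into the claimed formula for $\mu$. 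Finally, equating the two values of $\lambda$ given by the first and second equations makes the $bd$-terms cancel and leaves $b-b'+b^2=d-d'+d^2$; rewriting each side via $b-b'+b^2=\frac{f_1'-f_1''}{f_1}+2\left(\frac{f_1'}{f_1}\right)^2$ and its $f_2$-analogue, then multiplying by $-1$, gives precisely (\ref{ka}). Since every manipulation is reversible, these three relations conversely imply the soliton equations, which takes care of the "if" direction.

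The argument carries no genuine difficulty; the only point demanding care is the passage between the $b,d$ notation and the $f_1,f_2$ notation — in particular keeping the identities $b'-b^2=\frac{f_1''}{f_1}-2\left(\frac{f_1'}{f_1}\right)^2$ and $d'-d^2=\frac{f_2''}{f_2}-2\left(\frac{f_2'}{f_2}\right)^2$ straight, and tracking the signs when eliminating $\lambda$ and $\mu$.
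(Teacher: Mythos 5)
Your proposal is correct and follows essentially the same route as the paper: reduce the soliton equation to system (\ref{19p}), note the off-diagonal equations hold identically since $a=c=0$, read off $\lambda$ from the first equation and $\mu$ from the third, and obtain (\ref{ka}) by equating the two expressions for $\lambda$ from the first two equations. Your verification of the identities $b'-b^2=\frac{f_1''}{f_1}-2\left(\frac{f_1'}{f_1}\right)^2$ and its analogue is exactly the bookkeeping the paper leaves implicit.
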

\begin{proof}
In this case, we have $a=0$, $b=\frac{\displaystyle f_1'}{\displaystyle f_1}$, $c=0$, $d=\frac{\displaystyle f_2'}{\displaystyle f_2}$, and (\ref{19p}) becomes
\begin{equation}\label{17aax}
\left\{
    \begin{aligned}
 &      -b+\R(E_1,E_1)+\lambda=0 \\
 &      -d+\R(E_2,E_2)+\lambda=0 \\
 &      \R(E_3,E_3)+\lambda+\mu=0
    \end{aligned}
  \right.,
\end{equation}
with
\begin{align*}
\R(E_1,E_1)&=\frac{f_1''}{f_1}-2\left(\frac{f_1'}{f_1}\right)^2-\frac{f_1'}{f_1}\frac{f_2'}{f_2},\\
\R(E_2,E_2)&=\frac{f_2''}{f_2}-2\left(\frac{f_2'}{f_2}\right)^2-\frac{f_1'}{f_1}\frac{f_2'}{f_2},\\
\R(E_3,E_3)&=\frac{f_1''}{f_1}-2\left(\frac{f_1'}{f_1}\right)^2+\frac{f_2''}{f_2}-2\left(\frac{f_2'}{f_2}\right)^2.
\end{align*}
The two functions $f_1$ and $f_2$ must satisfy (\ref{ka}) due to the first two equations of (\ref{17aax}), and we obtain the expressions of $\lambda$ and $\mu$ from the first and the last equation of the same system.
\end{proof}

\begin{example}
For $f_i(x^3)=k_ie^{kx^3}$, $i\in \{1,2\}$, $k,k_1,k_2\in \mathbb R\setminus \{0\}$,
$V=\frac{\displaystyle \partial }{\displaystyle \partial x^3}$, and $\eta=dx^3$, $(\mathbb R^3,\tilde g, V, \lambda, \mu)$ is an $\eta$-Ricci soliton if and only if $$\lambda=2k^2+k\ \ \textrm{and} \ \ \mu=-k.$$
\end{example}

\begin{corollary}
Under the hypotheses of Theorem \ref{mwa}, if $f_1=f_2=:f(x^3)$, then
$(I,\tilde g,V)$ is an almost $\eta$-Ricci soliton with $\lambda$ and $\mu$ as scalar functions if and only if they are given by
$$\lambda=3\left(\frac{\displaystyle f'}{\displaystyle f}\right)^2+\frac{\displaystyle f'-f''}{\displaystyle f}\ \ \textrm{and} \ \
\mu=\left(\frac{\displaystyle f'}{\displaystyle f}\right)^2-\frac{\displaystyle f'+f''}{\displaystyle f}.$$
\end{corollary}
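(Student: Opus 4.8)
The plan is to specialize Theorem~\ref{mwa} to the case $f_1=f_2=:f(x^3)$. The first observation is that under this identification the compatibility constraint~\eqref{ka} becomes an identity, since its two sides coincide term by term once $f_1$ and $f_2$ are both replaced by $f$; hence it imposes no restriction on $f$ at all. Therefore Theorem~\ref{mwa} reduces to the assertion that $(I,\tilde g,V,\lambda,\mu)$ is an almost $\eta$-Ricci soliton if and only if $\lambda$ and $\mu$ are exactly the functions obtained from the formulas of that theorem with $f_1=f_2=f$.

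The second step is the substitution itself. Putting $b=d=f'/f$ in
$$\lambda=\frac{f_1'}{f_1}\left(\frac{f_2'}{f_2}+1\right)-\frac{f_1''}{f_1}+2\left(\frac{f_1'}{f_1}\right)^2$$
and collecting the coefficient of $(f'/f)^2$ gives
$$\lambda=3\left(\frac{f'}{f}\right)^2+\frac{f'}{f}-\frac{f''}{f}=3\left(\frac{f'}{f}\right)^2+\frac{f'-f''}{f},$$
and the same substitution in the formula for $\mu$ produces
$$\mu=\left(\frac{f'}{f}\right)^2-\frac{f'}{f}-\frac{f''}{f}=\left(\frac{f'}{f}\right)^2-\frac{f'+f''}{f}.$$
Since $f$ is smooth and nowhere zero and depends only on $x^3$, these right-hand sides are well-defined smooth functions on $I$, which is what is meant by $\lambda$ and $\mu$ being scalar functions.

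I do not expect a genuine obstacle here: the entire content of the corollary is already contained in Theorem~\ref{mwa}, and the only points that need care are verifying that~\eqref{ka} is vacuous when $f_1=f_2$ — so that the equivalence in the corollary is clean, with no leftover constraint on $f$ — and carrying out the elementary algebraic simplification displayed above. No existence or regularity argument for $f$ is required, because \emph{every} nowhere-vanishing function $f=f(x^3)$ gives rise to such an almost $\eta$-Ricci soliton.
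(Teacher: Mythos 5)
Your proposal is correct and follows exactly the paper's route: the paper's proof is simply ``It follows immediately from Theorem \ref{mwa},'' and your two observations --- that the compatibility condition \eqref{ka} becomes an identity when $f_1=f_2$, and that substituting $f_1=f_2=f$ into the formulas for $\lambda$ and $\mu$ yields the stated expressions --- are precisely the details being elided there. The algebra checks out in both displayed computations.
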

\begin{proof}
It follows immediately from Theorem \ref{mwa}.
\end{proof}
\pagebreak

\begin{theorem}\label{h}
Let $V=\frac{\displaystyle \partial }{\displaystyle \partial x^3}$ and $\eta=dx^3$. If $f_1=f_1(x^1)$, $f_2=f_2(x^3)$,
then $(I,\tilde g,V,\lambda,\mu)$ is an almost $\eta$-Ricci soliton if and only if
$$f_2(x^3)=c_0e^{-x^3}, \ \ \lambda=0, \ \ \mu=1,$$
where $c_0\in \mathbb R\setminus \{0\}$, or
$$f_2(x^3)=\frac{c_1}{c_2e^{x^3}+1}, \ \ \lambda=0, \ \ \mu(x^3)= \frac{c_2e^{x^3}}{c_2e^{x^3}+1},$$
where $c_1,c_2\in \mathbb R\setminus \{0\}$ such that $c_2\neq -e^{-x^3}$ for all $x^3\in I_3$.

In particular, for $I_3=\mathbb R$, we additionally have the condition $c_2>0$.
\end{theorem}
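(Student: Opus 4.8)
The plan is to substitute the present metric into the system (\ref{19p}) governing solitons with $V=\frac{\partial}{\partial x^3}$ and $\eta=dx^3$, reduce it to a single first-order ODE for $d=\frac{f_2'}{f_2}$ together with closed formulas for $\lambda$ and $\mu$, and then integrate that ODE, discarding the solutions incompatible with $\mu\neq 0$ and with $f_2$ being smooth and nowhere zero on $I_3$. The only computation of substance is the integration of the ODE; everything else is bookkeeping.

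First I would note that the hypotheses $f_1=f_1(x^1)$, $f_2=f_2(x^3)$ give $a=b=c=0$ and $d=\frac{f_2'}{f_2}$, exactly as in the proof of Theorem \ref{gss}, whence $\R(E_1,E_1)=0$, $\R(E_2,E_2)=\R(E_3,E_3)=d'-d^2$, and $\R(E_1,E_3)=\R(E_2,E_3)=0$. Feeding this into (\ref{19p}): the last two equations hold identically; since $b=0$ the first equation forces $\lambda=0$; the second then reads $d'-d^2-d=0$, i.e. $d$ solves $d'=d(d+1)$; and the third gives $\mu=d^2-d'=-d$ once the previous relation is used. Thus $(I,\tilde g,V,\lambda,\mu)$ is an almost $\eta$-Ricci soliton if and only if $\lambda=0$, $\mu=-d$, and $d=\frac{f_2'}{f_2}$ solves $d'=d(d+1)$ on $I_3$, the requirement $\mu\neq 0$ amounting to $d$ not vanishing.

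It remains to integrate $d'=d(d+1)$. The constant solution $d\equiv 0$ is ruled out since it gives $\mu=-d=0$. The constant solution $d\equiv -1$ gives $\frac{f_2'}{f_2}=-1$, hence $f_2(x^3)=c_0e^{-x^3}$ with $c_0\in\mathbb R\setminus\{0\}$, and then $\lambda=0$, $\mu=1$; this is the first alternative. On any interval where $d$ avoids $0$ and $-1$, separating variables with $\frac{1}{d(d+1)}=\frac1d-\frac1{d+1}$ gives $\ln\bigl|\tfrac{d}{d+1}\bigr|=x^3+\mathrm{const}$, so $\tfrac{d}{d+1}=A e^{x^3}$ for a nonzero constant $A$; solving for $d$ and integrating $d=(\ln|f_2|)'$ once more, then writing $c_2=-A$, produces $f_2(x^3)=\dfrac{c_1}{c_2e^{x^3}+1}$ with $c_1\in\mathbb R\setminus\{0\}$, $c_2\in\mathbb R\setminus\{0\}$, together with $\lambda=0$ and $\mu=-d=\dfrac{c_2e^{x^3}}{c_2e^{x^3}+1}$, which is nowhere zero; this is the second alternative. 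Conversely, each listed triple manifestly satisfies $\lambda=0$, $\mu=-d$, $d'=d(d+1)$, hence (\ref{19p}). Finally, requiring $f_2$ to be smooth and nonvanishing throughout $I_3$ forces $c_2e^{x^3}+1\neq 0$ for all $x^3\in I_3$, i.e. $c_2\neq -e^{-x^3}$ on $I_3$; for $I_3=\mathbb R$ the map $x^3\mapsto -e^{-x^3}$ has range $(-\infty,0)$, so this becomes $c_2\ge 0$, whence $c_2>0$. I anticipate no real obstacle; the one subtlety is to treat the ODE regime by regime and confirm that the single discarded regime, $d\equiv 0$, is precisely the one excluded by $\mu\neq 0$, which is what turns the statement into a genuine equivalence rather than a one-way implication.
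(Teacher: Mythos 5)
Your proposal is correct and follows essentially the same route as the paper: reduce (\ref{19p}) to $\lambda=0$, $d'-d^2=d$, $\mu=d^2-d'=-d$, then split into the cases $d\equiv 0$ (excluded by $\mu\neq 0$), $d\equiv -1$, and $d\notin\{0,-1\}$ with separation of variables. The only cosmetic difference is that you make explicit the observations that $\mu=-d$ throughout and that $c_2>0$ follows from the range of $x^3\mapsto -e^{-x^3}$ when $I_3=\mathbb R$, which the paper leaves implicit.
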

\begin{proof}
In this case, we have $a=b=c=0$, $d=\frac{\displaystyle f_2'}{\displaystyle f_2}$, and (\ref{19p}) becomes
\begin{equation*}
\left\{
    \begin{aligned}
      &\lambda=0 \\
      &-d+d'-d^2=0 \\
      &d'-d^2+\mu=0
    \end{aligned}
  \right..
\end{equation*}
If $d=0$, then $f_2$ is constant, and $\lambda=\mu=0$ (which contradicts $\mu\neq 0$). If $d=-1$, then $\ln |f_2(x^3)|=-x^3+k_0$, where $k_0\in \mathbb R$, and we get $f_2(x^3)=\pm e^{-x^3+k_0}$ and $\mu=1$. If $d\neq -1$ and $d\neq 0$, from the second equation of the above system, we get
$$1=\frac{d'}{d^2+d}=\frac{d'}{d}-\frac{d'}{d+1},$$
which, by integration, gives
$$\ln\left(\left|\frac{d(x^3)}{d(x^3)+1}\right|\right)=x^3+k_1,$$
where $k_1\in \mathbb R$, from which
$$(1-k_2e^{x^3})d(x^3)=k_2e^{x^3},$$
where $k_2\in \mathbb R\setminus \{0\}$ and for all $x^3\in I_3$.
This infers
$$\frac{f_2'(x^3)}{f_2(x^3)}=d(x^3)=\frac{k_2e^{x^3}}{1-k_2e^{x^3}},$$
where $k_2\neq e^{-x^3}$ for all $x^3\in I_3$, and, by integrating, we get
$$f_2(x^3)=\frac{k_3}{1-k_2e^{x^3}},$$
where $k_3\in \mathbb R\setminus \{0\}$, and
$$\mu(x^3)=d^2(x^3)-d'(x^3)=-d(x^3)=\frac{-k_2e^{x^3}}{1-k_2e^{x^3}}.\qquad \qed$$
\renewcommand{\qed}{}
\end{proof}

\begin{theorem}\label{mw1}
Let $V=\frac{\displaystyle \partial }{\displaystyle \partial x^3}$ and $\eta=dx^3$. If $f_i=f_i(x^2)$ for $i\in \{1,2\}$, then $(I,\tilde g,V, \lambda, \mu)$ is an almost $\eta$-Ricci soliton if and only if
$$\lambda=-f_2^2\left[\left(\frac{\displaystyle f_1'}{\displaystyle f_1}\right)'-\left(\frac{\displaystyle f_1'}{\displaystyle f_1}\right)^2+\frac{f_1'}{f_1}\frac{f_2'}{f_2}\right]\ \ \textrm{and} \ \
\mu=f_2^2\left[\left(\frac{\displaystyle f_1'}{\displaystyle f_1}\right)'-\left(\frac{\displaystyle f_1'}{\displaystyle f_1}\right)^2+\frac{f_1'}{f_1}\frac{f_2'}{f_2}\right].$$
\end{theorem}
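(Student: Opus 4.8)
The plan is to imitate the proof scheme of Theorems~\ref{mwa} and \ref{h}: specialise the structure functions $a,b,c,d$ to the present case, read off the Ricci components, and solve the reduced system (\ref{19p}) for $\lambda$ and $\mu$. First I would record that, when $f_1=f_1(x^2)$ and $f_2=f_2(x^2)$, one has $a=f_2\frac{f_1'}{f_1}$ and $b=c=d=0$, exactly as in Proposition~\ref{ps11ax}; the crucial extra observation is that $a$ depends only on $x^2$, hence $E_3(a)=0$.

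Next I would substitute into the Ricci formulas of Section~2. With $b=c=d=0$ they collapse to $\R(E_1,E_1)=\R(E_2,E_2)=E_2(a)-a^2$, $\R(E_3,E_3)=0$, $\R(E_1,E_2)=\R(E_1,E_3)=0$, and $\R(E_2,E_3)=E_3(a)-ab=0$. Consequently the fourth and fifth equations of (\ref{19p}) hold identically. This is the reason why, unlike in Theorem~\ref{mwa}, no compatibility relation between $f_1$ and $f_2$ of the type (\ref{ka}) shows up: the relevant off-diagonal Ricci term vanishes for free, precisely because $a$ has no $x^3$-dependence.

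It then remains to treat the first three equations of (\ref{19p}). Since $b=d=0$, the first two equations coincide and yield $\lambda=a^2-E_2(a)$, while the third gives $\mu=-\R(E_3,E_3)-\lambda=-\lambda$. To finish, I would expand $E_2(a)-a^2$ using $E_2=f_2\frac{\partial}{\partial x^2}$ and $a=f_2\frac{f_1'}{f_1}$; a one-line Leibniz computation gives
$$E_2(a)-a^2=f_2^2\left[\left(\frac{f_1'}{f_1}\right)'-\left(\frac{f_1'}{f_1}\right)^2+\frac{f_1'}{f_1}\,\frac{f_2'}{f_2}\right],$$
so that $\mu=E_2(a)-a^2$ and $\lambda=-\mu$ take the asserted form. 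Conversely, if $\lambda$ and $\mu$ are prescribed by these expressions then plugging them back satisfies every equation of (\ref{19p}), so $(I,\tilde g,V,\lambda,\mu)$ is an almost $\eta$-Ricci soliton (with $\mu$ nowhere zero exactly when the bracketed quantity is nowhere zero, which is what the definition requires). There is no genuine obstacle here: the argument is a direct specialisation, and the only point meriting care is the verification that $\R(E_2,E_3)=0$, since that is what eliminates the PDE constraint present in the analogous $x^3$-dependent case.
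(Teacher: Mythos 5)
Your proposal is correct and follows essentially the same route as the paper's proof: specialise to $a=f_2 f_1'/f_1$, $b=c=d=0$, note that the off-diagonal Ricci components vanish (in particular $\R(E_2,E_3)=E_3(a)=0$ since $a$ depends only on $x^2$), and solve the three surviving equations of the reduced system for $\lambda$ and $\mu$. Your expansion of $E_2(a)-a^2$ agrees with the paper's stated value of $\R(E_1,E_1)=\R(E_2,E_2)$, so nothing further is needed.
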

\begin{proof}
In this case, we have $a=f_2\frac{\displaystyle f_1'}{\displaystyle f_1}$, $b=c=d=0$, and (\ref{19p}) becomes
\begin{equation*}
\left\{
    \begin{aligned}
 &      \R(E_1,E_1)+\lambda=0 \\
  &     \R(E_2,E_2)+\lambda=0 \\
  &     \lambda+\mu=0
    \end{aligned}
  \right.,
\end{equation*}
with
$$\R(E_1,E_1)=\R(E_2,E_2)=f_2^2\left[\left(\frac{\displaystyle f_1'}{\displaystyle f_1}\right)'-\left(\frac{\displaystyle f_1'}{\displaystyle f_1}\right)^2+\frac{f_1'}{f_1}\frac{f_2'}{f_2}\right]. \qquad \qed$$
\renewcommand{\qed}{}
\end{proof}

\begin{example}
For $f_1(x^2)=k_1e^{-x^2}$, $f_2(x^2)=k_2e^{x^2}$, $k_1,k_2\in \mathbb R\setminus \{0\}$, $V=\frac{\displaystyle \partial }{\displaystyle \partial x^3}$, and $\eta=dx^3$, $( \mathbb R^3,\tilde g, V, \lambda, \mu)$ is an almost $\eta$-Ricci soliton if and only if $$\lambda(x^2)=2k_2^2e^{2x^2}\ \ \textrm{and} \ \ \mu(x^2)=-2k_2^2e^{2x^2}.$$
\end{example}

\begin{corollary}
Under the hypotheses of Theorem \ref{mw1}, if $f_1=f_2=:f(x^2)$, then
$(I,\tilde g,V)$ is an almost $\eta$-Ricci soliton with $\lambda$ and $\mu$ as scalar functions if and only if they are given by
$$\lambda=(f')^2-f''f\ \ \textrm{and} \ \
\mu=-(f')^2+f''f.$$
\end{corollary}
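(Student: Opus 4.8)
The plan is to obtain this as an immediate specialization of Theorem \ref{mw1}, since the only extra hypothesis is the identification $f_1=f_2=:f$, which forces $\frac{f_1'}{f_1}=\frac{f_2'}{f_2}=\frac{f'}{f}$. So first I would substitute this into the bracketed expression that controls both $\lambda$ and $\mu$ in Theorem \ref{mw1}, namely
$$\left(\frac{f_1'}{f_1}\right)'-\left(\frac{f_1'}{f_1}\right)^2+\frac{f_1'}{f_1}\frac{f_2'}{f_2}\ \longmapsto\ \left(\frac{f'}{f}\right)'-\left(\frac{f'}{f}\right)^2+\left(\frac{f'}{f}\right)^2=\left(\frac{f'}{f}\right)'.$$

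Next I would expand $\left(\frac{f'}{f}\right)'=\dfrac{f''f-(f')^2}{f^2}$ and plug it back: Theorem \ref{mw1} then gives
$$\lambda=-f_2^2\left(\frac{f'}{f}\right)'=-f^2\cdot\frac{f''f-(f')^2}{f^2}=(f')^2-f''f,$$
and, since the reduced system in the proof of Theorem \ref{mw1} contains $\lambda+\mu=0$, one reads off $\mu=-\lambda=-(f')^2+f''f$. Conversely, if $\lambda$ and $\mu$ are given by these formulas, they satisfy the equivalent conditions of Theorem \ref{mw1} (with $f_1=f_2=f$), so $(I,\tilde g,V)$ is indeed an almost $\eta$-Ricci soliton; hence the equivalence.

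There is no real obstacle here: the entire content is the cancellation of the two $\left(\frac{f'}{f}\right)^2$-type terms in the bracket, which is immediate, together with the algebraic identity for $\left(\frac{f'}{f}\right)'$. The only point I would flag explicitly is the meaning of ``scalar functions'': here $\lambda$ and $\mu$ turn out to depend only on $x^2$ through $f$, matching the earlier corollary under Theorem \ref{mwa}, and the soliton condition $\mu\neq0$ holds precisely where $f''f-(f')^2\neq0$, i.e.\ where $\frac{f'}{f}$ is nonconstant, which one either treats as understood or imposes on a suitable subinterval.
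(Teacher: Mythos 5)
Your proposal is correct and follows exactly the paper's route: the paper's proof is simply ``It follows immediately from Theorem \ref{mw1}'', and your substitution $f_1=f_2=f$, the cancellation of the two $\left(\frac{f'}{f}\right)^2$ terms, and the identity $\left(\frac{f'}{f}\right)'=\frac{f''f-(f')^2}{f^2}$ are precisely the omitted computation. Your added remark on where $\mu\neq 0$ holds is a reasonable clarification but not part of the paper's argument.
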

\begin{proof}
It follows immediately from Theorem \ref{mw1}.
\end{proof}

\begin{theorem}\label{mw2}
Let $V=\frac{\displaystyle \partial }{\displaystyle \partial x^3}$ and $\eta=dx^3$. If $f_1=f_1(x^2)$ and $f_2=f_2(x^1)$, then $(I,\tilde g,V, \lambda,\mu)$ is an almost $\eta$-Ricci soliton if and only if
$$\lambda=-f_1^2\left[\left(\frac{\displaystyle f_2'}{\displaystyle f_2}\right)'-\left(\frac{\displaystyle f_2'}{\displaystyle f_2}\right)^2\right]-f_2^2\left[\left(\frac{\displaystyle f_1'}{\displaystyle f_1}\right)'-\left(\frac{\displaystyle f_1'}{\displaystyle f_1}\right)^2\right]$$
and
$$\mu=f_1^2\left[\left(\frac{\displaystyle f_2'}{\displaystyle f_2}\right)'-\left(\frac{\displaystyle f_2'}{\displaystyle f_2}\right)^2\right]+f_2^2\left[\left(\frac{\displaystyle f_1'}{\displaystyle f_1}\right)'-\left(\frac{\displaystyle f_1'}{\displaystyle f_1}\right)^2\right].$$
\end{theorem}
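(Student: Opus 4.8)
The plan is to specialize the general soliton system to the present situation, following the same scheme already used for Theorems \ref{mwa} and \ref{mw1}. First I would record the structure functions: since $f_1$ depends only on $x^2$ and $f_2$ depends only on $x^1$, one has $a=f_2\frac{f_1'}{f_1}$, $b=0$, $c=f_1\frac{f_2'}{f_2}$, and $d=0$, exactly as in Proposition \ref{ps11b}. With $V=\frac{\partial}{\partial x^3}$ and $\eta=dx^3$ the defining equations are (\ref{19p}); because $b=d=0$, the first two of these reduce to $\R(E_1,E_1)+\lambda=0$ and $\R(E_2,E_2)+\lambda=0$, the third to $\R(E_3,E_3)+\lambda+\mu=0$, and the last two to $\R(E_1,E_3)=0$ and $\R(E_2,E_3)=0$.

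Next I would read off the relevant Ricci components from the formulas in Section 2 with $b=d=0$. One gets $\R(E_1,E_1)=\R(E_2,E_2)=E_1(c)+E_2(a)-a^2-c^2$, so the first two equations coincide and merely prescribe $\lambda=-\R(E_1,E_1)$. Still with $b=d=0$ one has $\R(E_3,E_3)=E_3(b)+E_3(d)-b^2-d^2=0$, so the third equation gives $\mu=-\lambda$. Finally, since $a$ and $c$ depend only on $x^1$ and $x^2$, we have $E_3(a)=E_3(c)=0$, hence $\R(E_1,E_3)=E_3(c)-cd=0$ and $\R(E_2,E_3)=E_3(a)-ab=0$ hold automatically, so no extra constraint on $f_1$ and $f_2$ is produced.

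It then remains to evaluate $\R(E_1,E_1)$ explicitly. Using $E_1=f_1\partial_{x^1}$ and $E_2=f_2\partial_{x^2}$, together with the facts that $f_1$ is independent of $x^1$ and $f_2$ is independent of $x^2$, I would compute $E_1(c)=f_1^2\left(\frac{f_2'}{f_2}\right)'$, $E_2(a)=f_2^2\left(\frac{f_1'}{f_1}\right)'$, $a^2=f_2^2\left(\frac{f_1'}{f_1}\right)^2$, and $c^2=f_1^2\left(\frac{f_2'}{f_2}\right)^2$, whence $\R(E_1,E_1)=f_1^2\left[\left(\frac{f_2'}{f_2}\right)'-\left(\frac{f_2'}{f_2}\right)^2\right]+f_2^2\left[\left(\frac{f_1'}{f_1}\right)'-\left(\frac{f_1'}{f_1}\right)^2\right]$. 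Substituting this into $\lambda=-\R(E_1,E_1)$ and $\mu=\R(E_1,E_1)$ gives the stated formulas, and conversely these choices of $\lambda$ and $\mu$ visibly satisfy all of (\ref{19p}). The computation is entirely routine; the only point deserving a moment's attention is confirming that the two equations $\R(E_1,E_3)=\R(E_2,E_3)=0$ impose nothing further, which is exactly where the hypotheses $f_1=f_1(x^2)$ and $f_2=f_2(x^1)$ — making $a$ and $c$ independent of $x^3$ — are used.
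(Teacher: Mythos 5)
Your proposal is correct and follows essentially the same route as the paper: specialize $a,b,c,d$, reduce the system (\ref{19p}) to $\R(E_1,E_1)+\lambda=0$, $\R(E_2,E_2)+\lambda=0$, $\lambda+\mu=0$, and evaluate $\R(E_1,E_1)=\R(E_2,E_2)$ explicitly. Your added remark that $\R(E_1,E_3)=\R(E_2,E_3)=0$ hold automatically (so no extra constraint arises) is a detail the paper leaves implicit, but it does not change the argument.
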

\begin{proof}
In this case, we have $a=f_2\frac{\displaystyle f_1'}{\displaystyle f_1}$, $b=0$, $c=f_1\frac{\displaystyle f_2'}{\displaystyle f_2}$, $d=0$, and (\ref{19p}) becomes
\begin{equation*}
\left\{
    \begin{aligned}
  &     \R(E_1,E_1)+\lambda=0 \\
  &     \R(E_2,E_2)+\lambda=0 \\
   &    \lambda+\mu=0
    \end{aligned}
  \right.,
\end{equation*}
with
$$\R(E_1,E_1)=\R(E_2,E_2)=f_1^2\left[\left(\frac{\displaystyle f_2'}{\displaystyle f_2}\right)'-\left(\frac{\displaystyle f_2'}{\displaystyle f_2}\right)^2\right]+f_2^2\left[\left(\frac{\displaystyle f_1'}{\displaystyle f_1}\right)'-\left(\frac{\displaystyle f_1'}{\displaystyle f_1}\right)^2\right]. \qquad \qed$$
\renewcommand{\qed}{}
\end{proof}

\begin{example}
For $f_1(x^2)=e^{x^2}$, $f_2(x^1)=e^{x^1}$, $V=\frac{\displaystyle \partial }{\displaystyle \partial x^3}$, and $\eta=dx^3$, $(\mathbb R^3,\tilde g, V, \lambda, \mu)$ is an almost $\eta$-Ricci soliton if and only if $$\lambda(x^1,x^2)=e^{2x^1}+e^{2x^2}\ \ \textrm{and} \ \ \mu(x^1,x^2)=-e^{2x^1}-e^{2x^2}.$$
\end{example}

\begin{theorem}\label{mw}
Let $V=\frac{\displaystyle \partial }{\displaystyle \partial x^3}$ and $\eta=dx^3$. If $f_1=f_1(x^2)$ and $f_2=f_2(x^3)$, then $(I,\tilde g,V, \lambda,\mu)$ is an almost $\eta$-Ricci soliton if and only if
$$d'=d(d+1), \ \  f_1' f_2'=0,$$
$$\lambda=-f_2^2\left[\left(\frac{\displaystyle f_1'}{\displaystyle f_1}\right)'-\left(\frac{\displaystyle f_1'}{\displaystyle f_1}\right)^2\right]\ \ \textrm{and} \ \  \mu=f_2^2\left[\left(\frac{\displaystyle f_1'}{\displaystyle f_1}\right)'-\left(\frac{\displaystyle f_1'}{\displaystyle f_1}\right)^2\right]-\left[\left(\frac{\displaystyle f_2'}{\displaystyle f_2}\right)'-\left(\frac{\displaystyle f_2'}{\displaystyle f_2}\right)^2\right].$$

Moreover, one of the following cases is satisfied:

(i) $f_2=k_2\in \mathbb R\setminus\{0\}$ and
$$\lambda=-f_2^2\left[\left(\frac{\displaystyle f_1'}{\displaystyle f_1}\right)'-\left(\frac{\displaystyle f_1'}{\displaystyle f_1}\right)^2\right],\ \
\mu=f_2^2\left[\left(\frac{\displaystyle f_1'}{\displaystyle f_1}\right)'-\left(\frac{\displaystyle f_1'}{\displaystyle f_1}\right)^2\right];$$

(ii) $f_1=k_1\in \mathbb R\setminus\{0\}$, $f_2(x^3)=\frac{\displaystyle c_1}{\displaystyle c_2e^{x^3}+1}$, where $c_1,c_2\in \mathbb R\setminus \{0\}$ such that $c_2\neq -e^{-x^3}$ for all $x^3\in I_3$, and
$$\lambda =0,\ \ \mu=\frac{\displaystyle c_2e^{x^3}}{\displaystyle \displaystyle c_2e^{x^3}+1};$$

(iii) $f_1=k_1\in \mathbb R\setminus\{0\}$, $f_2(x^3)=c_0 e^{-x^3}$, where $c_0\in \mathbb R\setminus \{0\}$, and
$$\lambda =0,\ \ \mu=1.$$
\end{theorem}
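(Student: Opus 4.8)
\emph{Proof proposal.} The plan is to specialize the system (\ref{19p}), which characterizes almost $\eta$-Ricci solitons with $V=\frac{\partial}{\partial x^3}$ and $\eta=dx^3$, to the case $f_1=f_1(x^2)$, $f_2=f_2(x^3)$. First I would record that here $a=f_2\frac{f_1'}{f_1}$, $b=c=0$, $d=\frac{f_2'}{f_2}$, so that (as in the proof of Proposition \ref{pl3a}) $\R(E_1,E_1)=E_2(a)-a^2$, $\R(E_2,E_2)=E_2(a)-a^2+E_3(d)-d^2$, $\R(E_3,E_3)=E_3(d)-d^2$, $\R(E_1,E_3)=0$ and $\R(E_2,E_3)=E_3(a)$. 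Substituting into (\ref{19p}) and using $b=0$, the equation $\R(E_1,E_3)=0$ is vacuous and the system collapses to $E_2(a)-a^2+\lambda=0$, $-d+E_2(a)-a^2+E_3(d)-d^2+\lambda=0$, $E_3(d)-d^2+\lambda+\mu=0$, and $E_3(a)=0$.

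Next I would read off the content of these four relations. The first gives $\lambda=a^2-E_2(a)$; subtracting it from the second eliminates $\lambda$ and leaves $E_3(d)-d^2=d$, that is, $d'=d(d+1)$; the third then gives $\mu=-\lambda-E_3(d)+d^2$. Translating into $f_1,f_2$ by means of $E_2=f_2\frac{\partial}{\partial x^2}$, $E_3=\frac{\partial}{\partial x^3}$, $f_1=f_1(x^2)$, $f_2=f_2(x^3)$, one computes $E_2(a)-a^2=f_2^2\left[\left(\frac{f_1'}{f_1}\right)'-\left(\frac{f_1'}{f_1}\right)^2\right]$, $E_3(d)-d^2=\left(\frac{f_2'}{f_2}\right)'-\left(\frac{f_2'}{f_2}\right)^2$, and $E_3(a)=f_2'\frac{f_1'}{f_1}$; thus $E_3(a)=0$ is exactly $f_1'f_2'=0$, and the stated formulas for $\lambda$ and $\mu$ follow. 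This proves the ``if and only if'' part.

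For the ``Moreover'' part I would split on $f_1'f_2'=0$: since $f_1'$ depends only on $x^2$ and $f_2'$ only on $x^3$, their product vanishes on $I$ only when one of the two factors is identically zero, so $f_1$ or $f_2$ is constant. If $f_2=k_2$, then $d=0$, the equation $d'=d(d+1)$ holds automatically, and the formulas reduce to those in case (i). If $f_1=k_1$, then $\frac{f_1'}{f_1}\equiv 0$, hence $\lambda=0$ and $\mu=-(d'-d^2)=-d$ by the ODE; here $f_2$ cannot also be constant, for otherwise $\lambda=\mu=0$, contradicting $\mu\neq 0$ in the definition of the soliton, so $d\not\equiv 0$. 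The equation $d'=d(d+1)$ is the one already solved in the proof of Theorem \ref{h}: either $d\equiv -1$, whence $f_2(x^3)=c_0e^{-x^3}$ and $\mu=-d=1$ (case (iii)); or $d$ avoids the values $0$ and $-1$, and integrating $\frac{d'}{d}-\frac{d'}{d+1}=1$ and then $\frac{f_2'}{f_2}=d$ gives $f_2(x^3)=\frac{c_1}{c_2e^{x^3}+1}$ with $c_2\neq -e^{-x^3}$ on $I_3$, together with $\mu=-d=\frac{c_2e^{x^3}}{c_2e^{x^3}+1}$ (case (ii)).

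The whole argument is essentially computational; the only place that demands some care is the final case analysis — passing from $f_1'f_2'=0$ to ``one factor vanishes globally on $I$'', discarding the fully degenerate subcase via $\mu\neq 0$, and matching the two solution families of $d'=d(d+1)$ to cases (ii) and (iii) — which I would handle by reusing verbatim the integration already performed for Theorem \ref{h}.
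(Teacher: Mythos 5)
Your proposal is correct and follows essentially the same route as the paper: specializing the system (\ref{19p}) with $a=f_2\frac{f_1'}{f_1}$, $b=c=0$, $d=\frac{f_2'}{f_2}$, reading off $\lambda$, $\mu$, $d'=d(d+1)$ and $f_1'f_2'=0$, and then reusing the integration of $d'=d(d+1)$ from Theorem \ref{h} to obtain cases (i)--(iii). Your extra care in arguing that $f_1'f_2'=0$ forces one factor to vanish identically, and in discarding the doubly-constant subcase via $\mu\neq 0$, only makes explicit what the paper leaves implicit.
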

\begin{proof}
In this case, we have $a=f_2\frac{\displaystyle f_1'}{\displaystyle f_1}$, $b=c=0$, $d=\frac{\displaystyle f_2'}{\displaystyle f_2}$, and (\ref{19p}) becomes
\begin{equation}\label{17aa}
\left\{
    \begin{aligned}
 &      \R(E_1,E_1)+\lambda=0 \\
 &      -d+\R(E_2,E_2)+\lambda=0 \\
 &      \R(E_3,E_3)+\lambda+\mu=0 \\
 &      \R(E_2,E_3)=0
    \end{aligned}
  \right.,
\end{equation}
with
\begin{align*}
\R(E_1,E_1)&=f_2^2\left[\left(\frac{f_1'}{f_1}\right)'-\left(\frac{f_1'}{f_1}\right)^2\right],\\
\R(E_2,E_2)&=f_2^2\left[\left(\frac{f_1'}{f_1}\right)'-\left(\frac{f_1'}{f_1}\right)^2\right]+\left(\frac{f_2'}{f_2}\right)'-\left(\frac{f_2'}{f_2}\right)^2,\\
\R(E_3,E_3)&=\left(\frac{f_2'}{f_2}\right)'-\left(\frac{f_2'}{f_2}\right)^2,\\
\R(E_2,E_3)&=\frac{f_1'f_2'}{f_1}.
\end{align*}
$f_2$ must satisfy $d'=d(d+1)$ due to the first two equations of (\ref{17aa}), and we obtain the cases: \\
(i) $f_2$ constant; \\
(ii) $f_1$ constant and $f_2(x^3)=\frac{\displaystyle c_1}{\displaystyle c_2e^{x^3}+1}$ on $I_3$, where $c_1,c_2\in \mathbb R\setminus \{0\}$ and $e^{x^3}\neq -\frac{\displaystyle 1}{\displaystyle c_2}$ on $I_3$; \\
(iii) $f_1$ constant and $f_2(x^3)=c_0 e^{-x^3}$, where $c_0\in \mathbb R\setminus \{0\}$.
\end{proof}

\begin{example}
For $f_1=1$, $f_2(x^3)=e^{-x^3}$,
$V=\frac{\displaystyle \partial }{\displaystyle \partial x^3}$, and $\eta=dx^3$, $(\mathbb R^3,\tilde g, V, \lambda, \mu)$ is an $\eta$-Ricci soliton if and only if $$\lambda=0\ \ \textrm{and} \ \ \mu=1.$$ 
\end{example}


\vspace{0.001cm}

\textit{Department of Mathematics}

\textit{West University of Timi\c{s}oara}

\textit{Faculty of Mathematics and Computer Science}

\textit{Bld. V. P\^{a}rvan no. 4, 300223, Timi\c{s}oara, Rom\^{a}nia}

\textit{adarablaga@yahoo.com}

\end{document}